\newtheorem{definition}{Definition}
\newtheorem{theorem}{Theorem}
\newtheorem{lemma}{Lemma}
\newtheorem{corollary}{Corollary}
\newtheorem{proposition}{Proposition}
\newtheorem{remark}{Remark}
\newcommand{\R}{\mathbb{R}}
\numberwithin{equation}{section}
\author[Huerta]{Ignacio Huerta}
\author[Monzón]{Pablo Monzón}
\author[Robledo]{Gonzalo Robledo}
\email{ignacio.huertan@usm.cl, monzon@fing.edu.uy, grobledo@uchile.cl}
\address{Departamento de Matem\'atica, Universidad T\'ecnica Federico Santa Mar\'ia, Casilla 110-V, Valpara\'iso, Chile.}
\address{Facultad de Ingenier\'ia -- Universidad de la Rep\'ublica, Julio Herrera y Ressing 565, Montevideo, Uruguay.}
\address{Departamento de Matem\'aticas -- Universidad de Chile, Casilla 653, Las Palmeras 3425, \~Nu\~noa -- Santiago, Chile.}
\title[Controllability and feedback stabilization]{Controllability and feedback stabilizability in a nonuniform framework}
\keywords{Linear nonautonomous systems; nonuniform controllability; nonuniform exponential stabilizability; nonuniform bounded growth}
\subjclass{34D05, 34D09, 93B05, 93D15}
\thanks{This research has been supported by the grants FONDECYT Regular 1210733 and FONDECYT Postdoctorado 3210132}
\begin{document}

\begin{abstract}
We propose a new controllability property for linear non\-au\-to\-no\-mous control systems in finite dimension: the nonuniform complete con\-tro\-lla\-bility, which is halfway between the classical Kalman's properties of complete con\-tro\-lla\-bility and uniform complete controllability. This new concept has a strong linkage; as we prove; with the property of nonuniform bounded growth for the corresponding plant. In addition, we also prove that if a control system is nonuniformly completely controllable and its plant (uncontrolled part) has the property of nonuniform bounded growth, then there exist a linear feedback control leading to a nonuniformly exponentially stable closed--loop system.
\end{abstract}

\maketitle
 
%\begin{figure}
%    \centering
%    \includegraphics[scale=0.5]{esquema.png}
%    \caption{Caption}
 %   \label{fig:enter-label}
%\end{figure}

\section{Introduction}
The properties of controllability and feedback stabilizability are fundamental topics of control theory and rely on intrisic properties of the involved dynamical systems. This work provides new results describing the connection between these two concepts for a particular case of the nonautonomous linear control systems:
\begin{equation}
\label{control1}
    \dot{x}(t)=A(t)x(t)+B(t)u(t),
\end{equation}
where $t\mapsto A(t)\in M_{n\times n}(\mathbb{R})$ and $t\mapsto B(t)\in M_{n\times p}(\mathbb{R})$ are matrix valued functions for any $t\geq 0$, having orders  $n\times n$ and $n\times p$ respectively. In addition, 
$A(\cdot)$ and $B(\cdot)$ are measurable and bounded on finite intervals, $t\mapsto x(t)\in \mathbb{R}^{n}$ is known as the \textit{state vector} and piecewise continuous map $t\mapsto u(t)\in \mathbb{R}^{p}$ is the \textit{open--loop control} or \textit{input}. Finally, when considering the null input  $u(t) \equiv 0$, this enable us to call the linear system 
\begin{equation}
\label{lin}
\dot{x}=A(t)x
\end{equation}
as the \textit{plant} or
the \textit{uncontrolled part} of the control system (\ref{control1}). The transition matrix of (\ref{lin}) is denoted by $\Phi_{A}(t,s)$. Now, any solution of (\ref{lin}) passing through
$x_{0}$ at $t=t_{0}$, is denoted by $t\mapsto x(t,t_{0},x_{0})=\Phi_{A}(t,t_{0})x_{0}$. In addition, given an input $u$, any solution of (\ref{control1}) passing through the initial condition $x_{0}$ at time $t=t_{0}\geq 0$ will be denoted by $t\mapsto x(t,t_{0},x_{0},u)$. The solution of \eqref{control1} is given by the expression:
\begin{equation}\label{eq:solution}
x(t,t_{0},x_0,u)=\Phi(t,t_{0})x_0+\int_{t_0}^t\Phi(t,\tau)B(\tau)u(\tau)d\tau.   
\end{equation}

A linear control system is controllable if, roughly speaking, for any initial condition $x_{0}$ there exists at least one input $u$ that drives it to the origin in finite time; a formal definition will be stated in the next section.

If $A(\cdot)$ and $B(\cdot)$ are constant matrices, namely, when (\ref{control1}) is an autonomous linear control system, the controllability has been studied in depth. In particular, it has been established that the controllabilty is equivalent to the pro\-per\-ty of \textit{feedback stabilizability}: there exists a linear feedback input $u=-Fx$ with $F\in M_{p,n}(\mathbb{R})$ such that the closed loop system
\begin{equation}\label{eq:closedloop}
 \dot{x}=[A-BF]x   
\end{equation}
is uniformly asymptotically stable, and we refer to \cite{Anderson-1990,Chen,CLSD,Kalman-60,ZDG} for details.

 Contrarily to the autonomous framework, there is no direct equivalence between con\-tro\-lla\-bi\-li\-ty and feedback stabilizability for nonautonomous linear control systems. This is due to the fact that the above properties are no  univocally extended for the nonautonomous framework: there are numerous notions of controllability and asymptotic stability, which leads to different possible results relating them, as will be described in the next sections. In consequence, the problem of feedback stabilizability for linear nonautonomous control systems must be addressed by taking into account this multiplicity of properties.

\subsection{Novelty of this work}
The present work introduces and characterizes a controllability property for nonautonomous control systems: the \textit{nonuniform complete controllability} (NUCC), which is more general than the classical Kalman's  \textit{uniform complete controllability} (UCC) but more restrictive than mere \textit{complete controllability} (CC), both described in Section 2. We present examples of systems that shows the differences between these three classes.\\

%As a previous step of defining NUCC, we need to revisit the Kalman's definition of UCC \cite{Anderson,Kalman}, which is stated in terms of two gramian inequalities implying a 
%property which is called as the \textit{Kalman's condition} in \cite[Lemma 4]{Zhou-21}. In this context, 

The first novel result is the Lemma \ref{EKC}, which relates the classical UCC with the property of uniform bounded growth for nonautonomous linear systems (\ref{lin}), which is well known in the qualitative theory of linear nonautonomous systems but not so much in control theory.

Our second novel result is the Theorem \ref{p2l3}, which states
that the plant of a nonuniformly completely controllable system (\ref{lin}) must satisfy 
a condition that will be called as \textit{nonuniform Kalman's condition}, which is weaker than the classical uniform version. The proof is fa\-shio\-ned along the Kalman's approach \cite{Kalman} but the nonuniformities induce technical difficulties and forces a deeper treatment.\\

The third novel result concerns with feedback stabilization: 
Theorem \ref{T2} proves that if a linear control system (\ref{control1}) is NUCC and its plant has the property of nonuniform bounded growth property, it can be stabilized by a linear feedback input such that any solution of the closed loop system starting at $t=t_{0}$ is nonuniformly exponentially stable.

\subsection{Notations and basic settings}
Given $M\in M_{n}(\mathbb{R})$, $M^{T}$ is the transpose, $\operatorname{tr}(M)$ is the trace. The inner product of two vectors $x,y\in \mathbb{R}^{n}$ is denoted by
$\langle y, x\rangle=x^{T}y$ and the euclidean norm of a vector will be denoted by $|x|=\sqrt{\langle x,x\rangle}$. Given $A\in M_{n}(\mathbb{R})$, the matrix norm induced by $|\cdot|$ is
\begin{displaymath}
\|A\|=\sup\limits_{\eta\neq 0}\frac{|A\eta|}{|\eta|}=\sqrt{\lambda_{\max}(A^{T}A)},
\end{displaymath}
where $\lambda_{\max}(A^{T}A)$ is the maximum eigenvalue of $A^{T}A$, while its minimum eigenvalue
will be denoted by $\lambda_{\min}(A^{T}A)$.

A symmetric matrix $M=M^{T}\in M_{n}(\mathbb{R})$ is semi--positive definite if
$x^{T}Mx =\langle M x, x\rangle  \geq 0$ for any real vector $x\neq 0$, and this property will be denoted as $M \geq 0$.
In case that the above inequalities are strict, we say that $M=M^{T}$ is positive definite.

Given two matrices $M, N\in M_{n}(\mathbb{R})$, we write $M\leq N$ if  $N-M\geq0$ or equivalently, if $\langle M x, x\rangle \leq\langle N x, x\rangle$ for any $x \in \R^{n}$.

If $M\in M_{n}(\mathbb{R})$ is positive definite and $\kappa$ is a positive scalar such that $M\leq  \kappa I$ is verified,
it will be useful to recall that
\begin{equation}
\label{TRVP}
\operatorname{tr}(M)\leq n\kappa \quad \textnormal{and} \quad 0\leq \lambda_{\min}(M)\leq \lambda_{\max}(M) \leq \kappa.
\end{equation}

Finally, we will denote by $\mathcal{B}$ the set of real functions mapping bounded sets into bounded sets. 

\subsection{Structure of the article}
This article is on the crossroads of linear control systems theory and the theory of nonautonomous dynamical systems. In addition, the proof of our main results use methods and ideas which are current tools in nonautonomous dynamics but are not well known in control theory and; to make things more complicated; the classical control theory is not well known for scholars working on dynamical systems. In order to mitigate these mismatches, we estimated necessary and useful to write an encyclopaedic Section 2 focused to provide a common basis for both research communities.

%The section 2 has a double purpose: to recall basic notions and set the mathematical framework 
%necessary to describe the novelty of our results. Subsection 2.1 revisits the notions of complete controllability and uniform complete controllability. Subsection 2.2 recalls the feedback stabilizability and its link with the controllabilities
%previously described. Subsection 2.3 recalls the uniform/nonuniform exponential dichotomy and
%its corresponding spectra. In addition, and related with the nonuniform spectrum, we remind the property of nonuniform bounded growth. Subsection 2.4 describes a limit case of the nonuniform exponential  dichotomy, namely, the nonuniform exponential stability.
 
 In section 3 we move into the nonuniform framework, introducing the new notion of nonuniform complete controllability (NUCC) and describe its main consequences. Finally, the section 4 provides sufficient conditions such that nonuniform complete controllability implies feedback stabilizability, where the closed loop system \eqref{eq:closedloop} is nonuniformly exponentially stable. 
 
\section{Controllability and Feedback stabilizability: basic notions and nonuniform preliminaries}
In this section we briefly recall the classical definitions and results of controllability and feedback stabilizability stated respectively in the seminal Kalman's paper \cite{Kalman}
and the article of Ikeda \textit{et al.} \cite{Ikeda}. Moreover, we will see that the uniform complete controllability is related with the property of uniform bounded growth while
the specificities of the feedback stabilizability can be understood by considering some types of exponential stabilities: uniform and nonuniform ones. There exist several ways to define these stabilities but we choose to define them as a limit case of the properties of uniform/nonuniform exponential dichotomy.

%In addition,
%we prove that the Kalman's condition is equivalent to the property of uniform bounded growth, which
%is widely studied in qualitative theory of linear nonautonomous systems but not in linear control %theory.

\subsection{Controllability}
The properties of \textit{controllability} were introduced by R. Kal\-man in \cite{Kalman} for the linear control system \eqref{control1} and we will briefly recall them by following \cite{Anderson,Sontag}:
\begin{definition}
% \label{DC1}
The state $x_{0}\in \mathbb{R}^{n}$ of the control system \eqref{control1} is controllable at time $t_{0}\geq 0$, if there exists an input $u\colon [t_{0},t_{f}]\to \mathbb{R}^{p}$ such that $x(t_{f},t_{0},x_{0},u)=0$. In addition, the control system \eqref{control1} is: 
\begin{itemize}
\item[a)] \textbf{Controllable at time $t_{0}\geq 0$} if any state $x_{0}$ is controllable at time $t_{0}\geq 0$,
\item[b)] \textbf{Completely controllable} (CC) if it is controllable at any time $t_{0}\geq 0$.
\end{itemize}
\end{definition}
 
There exists a well known necessary and sufficient condition ensuring both controllability at time $t_{0}$ and complete controllability, which is stated in terms of the controllability gramian matrix, usually defined by
\begin{equation*}
% \label{contmatrix}
W(a,b)=\displaystyle\int_{a}^{b}\Phi_{A}(a,s)B(s)B^{T}(s)\Phi_{A}^{T}(a,s)\;ds.
\end{equation*}

The control system (\ref{control1}) is {\it controllable at time $t_{0}\geq 0$} if and only if there exists
$t_{f}>t_{0}\geq 0$ such that $W(t_{0},t_{f})>0$. In addition, is {\it completely controllable} if and only if for 
any $t_{0}\geq 0$, there exists $t_{f}>t_{0}$ such that $W(t_{0},t_{f})>0$. We refer the reader to \cite{Anderson67,Kreindler} for a detailed description. Essentially, invertibility of the gramian allows us to construct an explicit input function that drives the system towards the origin. By considering  a given couple of initial state $x_0$ and initial time $t_0$, together with the following input $u^{*}\colon [t_{0},t_{f}]\to \mathbb{R}$ described by:
\begin{equation}
\label{input}
u^{*}(t)=-B^T(t)\Phi^T(t_0,t)W^{-1}(t_0,t_f)x_0,
\end{equation}
we can see by (\ref{eq:solution}) that the respective solution of (\ref{control1}) with $u=u^{*}$ satisfies:
%then (\ref{eq:solution}) is the solution of the system \eqref{control1}. 
% is
% $$
% x(t,t_0,x_0,u)=\Phi(t,t_0)x_0+\int_{t_0}^t\Phi(t,\tau)B(\tau)u(\tau)d\tau
% $$
%By setting the input as
%\begin{equation}
%\label{input}
%u^{*}(t)=-B^T(t)\Phi^T(t_0,t)W^{-1}(t_0,t_f)x_0,
%\end{equation}
%it follows that the solution of \eqref{control1} satisfy: 
\begin{displaymath}
\begin{array}{rl}
x(t_f,t_{0},x_{0},u^{*})&=\displaystyle\Phi(t_f,t_0)x_0-\int_{t_0}^{t_f}\Phi(t_f,\tau)B(\tau)B^T(\tau)\Phi^T(t_0,\tau)W^{-1}(t_0,t_f)x_0d\tau,\\\\
%&=\displaystyle\Phi(t_f,t_0)x_0-\Phi(t_f,t_0)\int_{t_0}^{t_f}\Phi(t_0,\tau)B(\tau)B^T(\tau)\Phi^T(t_0,\tau)W^{-1}(t_0,t_f)x_0d\tau, \\\\
%&=\displaystyle\Phi(t_f,t_0)x_0-\Phi(t_f,t_0)\left( \int_{t_0}^{t_f}\Phi(t_0,\tau)B(\tau)B^T(\tau)\Phi^T(t_0,t_f) d\tau\right) W^{-1}(t_0,t_f)x_0, \\\\
&=\displaystyle\Phi(t_f,t_0)x_0-\Phi(t_f,t_0)W(t_0,t_f) W^{-1}(t_0,t_f)x_0=0.
\end{array}
\end{displaymath}

It is important to emphasize that, in some references, the controllability gramian matrix is also denoted by 
\begin{equation*}
% \label{Kcontmatrix}
K(a,b)=\displaystyle\int_{a}^{b}\Phi_{A}(b,s)B(s)B^{T}(s)\Phi_{A}^{T}(b,s)\;ds,
\end{equation*}
which verifies the following properties
\begin{equation}
\label{GP}
\left\{\begin{array}{rcl}
K(a,b)&=&\Phi_{A}(b,a)W(a,b)\Phi_{A}^{T}(b,a) \\
W(a,b)&=&\Phi_{A}(a,b)K(a,b)\Phi_{A}^{T}(a,b),
\end{array}\right.
\end{equation}
and the above mentioned controllability condition can also be stated in terms of invertibility of $K(t_{0},t_{f})$.\\

A stronger property of controllability, also due to R. Kalman \cite{Kalman,Kalman69}, is given by the \textit{uniform complete controllability} (UCC). In this case, there exists a time $\sigma>0$ such that ``\emph{one
can always transfer $x$ to 0 and 0 to $x$ in a finite length $\sigma$ of time; moreover, such a transfer
can never take place using an arbitrarily small amount of control energy}'' \cite[p.157]{Kalman}. Kalman stated the property in terms of the gramian. We recall a slightly different version restricted to the positive half line:
\begin{definition}    The linear control system \eqref{control1} is said to be uniformly completely controllable on $[0,+\infty)$ if there exists a fixed constant $\sigma>0$ and positive numbers $\alpha_0(\sigma)$, $\beta_{0}(\sigma)$, $\alpha_1(\sigma)$ and $\beta_1(\sigma)$ such that the following relations hold for all $t\geq 0$: 
\begin{equation}
\label{UC1}
0<\alpha_0(\sigma)I\leq W(t,t+\sigma)\leq \alpha_1(\sigma)I,
\end{equation}
and
\begin{equation}
\label{UC2}
0<\beta_0(\sigma)I\leq K(t,t+\sigma) \leq \beta_1(\sigma)I.
\end{equation}
\end{definition}

By using (\ref{GP}), we can see that the condition (\ref{UC2}) is equivalent to
\begin{displaymath}
0<\beta_0(\sigma)I\leq  \Phi_{A}(t+\sigma,t)W(t,t+\sigma)\Phi_{A}^{T}(t+\sigma,t)  \leq \beta_1(\sigma)I,
\end{displaymath}
which is widely employed in the literature. Nevertheless, by following \cite[p.114]{ZA}, we adopted (\ref{UC2}) by its practical convenience. It is important to emphasize that, in several references, the explicit dependence of $\alpha_{i}$ and $\beta_{i}$ ($i=0,1$)
with respect to $\sigma$ is not considered\footnote{In Appendix \ref{app:energy}, we present some considerations about the control energy required to transfer the state $x_0$ to the origin.}.\\

The property of uniform complete controllability have noticeable consequences, which have been pointed out by R. Kalman
in \cite{Kalman} and are summarized by the following result, whose proof is sketched in \cite[p.157]{Kalman} (see also \cite{Silverman}):
\begin{proposition}
\label{Prop1}
If the linear control system \eqref{control1} is UCC then: 
\begin{itemize}
\item[i)] The inequalities \eqref{UC1}--\eqref{UC2} are also verified for any $\sigma'>\sigma$. 
\item[ii)] There exists a function $\alpha\colon [0,+\infty)\to (0,+\infty)$, with $\alpha(\cdot)\in\mathcal{B}$, such that the plant \eqref{lin} has a transition matrix verifying the property
\begin{equation}
\label{BG}
\|\Phi_{A}(t,s)\| \leq \alpha(|t-s|) \quad \textnormal{for all $t,s \in [0,+\infty)$}.
\end{equation}
\end{itemize}
\end{proposition}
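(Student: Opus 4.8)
The plan is to reduce everything to a single basic estimate extracted from the gramian identities \eqref{GP}: whenever the bounds \eqref{UC1}--\eqref{UC2} hold at some scale $\ell$, they directly control the transition matrix over intervals of that length. Indeed, since $K(t,t+\ell)=\Phi_{A}(t+\ell,t)W(t,t+\ell)\Phi_{A}^{T}(t+\ell,t)$, testing against a unit vector and using $W(t,t+\ell)\geq\alpha_{0}(\ell)I$ together with $K(t,t+\ell)\leq\beta_{1}(\ell)I$ gives $\|\Phi_{A}(t+\ell,t)\|\leq\sqrt{\beta_{1}(\ell)/\alpha_{0}(\ell)}$ for every $t\geq 0$; symmetrically, $W(t,t+\ell)=\Phi_{A}(t,t+\ell)K(t,t+\ell)\Phi_{A}^{T}(t,t+\ell)$ with $K\geq\beta_{0}(\ell)I$ and $W\leq\alpha_{1}(\ell)I$ yields $\|\Phi_{A}(t,t+\ell)\|\leq\sqrt{\alpha_{1}(\ell)/\beta_{0}(\ell)}$. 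At the scale $\ell=\sigma$ this produces the uniform forward and backward one-step bounds $M_{f}:=\sqrt{\beta_{1}(\sigma)/\alpha_{0}(\sigma)}$ and $M_{b}:=\sqrt{\alpha_{1}(\sigma)/\beta_{0}(\sigma)}$, which serve as the building blocks for the rest. The decisive point is that these are \emph{direct}, composition-free bounds on $\Phi_{A}$.

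For part i) I would treat the lower and upper bounds separately. The lower bounds are immediate from monotonicity under enlargement of the integration interval: for $\sigma'>\sigma$ the positive-semidefinite integrand defining $W(t,t+\sigma')$ (fixed initial time $t$) gives $W(t,t+\sigma')\geq W(t,t+\sigma)\geq\alpha_{0}(\sigma)I$, while restricting $K(t,t+\sigma')$ to its trailing length-$\sigma$ subinterval $[t+\sigma'-\sigma,t+\sigma']$, on which the integrand keyed to the common final time $t+\sigma'$ is unchanged, gives $K(t,t+\sigma')\geq K(t+\sigma'-\sigma,t+\sigma')\geq\beta_{0}(\sigma)I$. Hence one may take $\alpha_{0}(\sigma')=\alpha_{0}(\sigma)$ and $\beta_{0}(\sigma')=\beta_{0}(\sigma)$. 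The upper bounds are the more delicate half. Writing $\sigma'\leq k\sigma$ with $k=\lceil\sigma'/\sigma\rceil$ and iterating the additive decomposition $W(a,c)=W(a,b)+\Phi_{A}(a,b)W(b,c)\Phi_{A}^{T}(a,b)$ along the nodes $t+j\sigma$, each summand is controlled by $\alpha_{1}(\sigma)I$ conjugated by a backward transition of norm at most $M_{b}^{j}$, so $W(t,t+k\sigma)\leq\alpha_{1}(\sigma)\sum_{j=0}^{k-1}M_{b}^{2j}\,I$; monotonicity then gives $W(t,t+\sigma')\leq W(t,t+k\sigma)$, defining a finite $\alpha_{1}(\sigma')$, and \eqref{GP} together with $\|\Phi_{A}(t+\sigma',t)\|\leq M_{f}^{k}$ converts this into a finite $\beta_{1}(\sigma')$.

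For part ii) I would combine the basic estimate with part i). If $|t-s|\geq\sigma$, I set $\ell=|t-s|$; since \eqref{UC1}--\eqref{UC2} hold at scale $\ell$ by part i), the basic estimate bounds $\|\Phi_{A}(t,s)\|$, forward or backward according to the sign of $t-s$, by $\max\bigl(\sqrt{\beta_{1}(\ell)/\alpha_{0}(\ell)},\sqrt{\alpha_{1}(\ell)/\beta_{0}(\ell)}\bigr)$. The remaining case $|t-s|<\sigma$ is exactly where one must avoid circularity, since the gramians carry no useful lower bound on subintervals shorter than $\sigma$; here I would extend the interval forward by $\sigma$ from its left endpoint. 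For $0\leq t-s<\sigma$ I write $\Phi_{A}(t,s)=\Phi_{A}(t,t+\sigma)\,\Phi_{A}(t+\sigma,s)$, where the first factor is a backward one-step transition of norm at most $M_{b}$ and the second is a forward transition over a length $(t-s)+\sigma\in[\sigma,2\sigma)$, hence bounded \emph{directly} (not recursively) by the basic estimate at that scale; the short backward case is symmetric.

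Assembling these pieces, I would define $\alpha(r)$ for $r\geq\sigma$ as the maximum of the two direct bounds above and, for $0\leq r<\sigma$, as $M_{b}\cdot\sup_{\sigma\leq\ell\leq 2\sigma}\sqrt{\beta_{1}(\ell)/\alpha_{0}(\ell)}$, so that \eqref{BG} holds for all $t,s$. That $\alpha\in\mathcal{B}$ follows because the lower-bound constants from part i) are constant in $\sigma'$ while the upper-bound constants grow at most like $M^{2\lceil\sigma'/\sigma\rceil}$, so all the relevant ratios stay bounded on every bounded range of lengths. The main obstacle throughout is precisely the upper-gramian control at larger scales in part i) and the short-interval estimate in part ii); both are overcome by the same device, namely that the identities \eqref{GP} turn the two-sided gramian bounds into composition-free bounds on $\Phi_{A}$ over any interval of length at least $\sigma$.
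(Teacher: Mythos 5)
The paper itself offers no proof of this proposition (it defers to Kalman's sketch and to Silverman--Anderson), so your argument must stand on its own, and it contains one genuine circular gap, located exactly at the point you yourself flag as delicate. In part i), the upper bound $\beta_{1}(\sigma')$ for $K(t,t+\sigma')$ is obtained from \eqref{GP} ``together with $\|\Phi_{A}(t+\sigma',t)\|\leq M_{f}^{k}$'', but this last inequality is not justified. Since $\sigma'$ is in general not an integer multiple of $\sigma$, the factorization $\Phi_{A}(t+\sigma',t)=\Phi_{A}(t+\sigma',t+(k-1)\sigma)\,\Phi_{A}(t+(k-1)\sigma,t)$ controls only the second factor (by $M_{f}^{\,k-1}$) and leaves a forward transition over the remainder $\sigma'-(k-1)\sigma\in(0,\sigma)$; peeling up to $t+k\sigma$ instead leaves a backward transition over $k\sigma-\sigma'<\sigma$. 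Either way one needs a bound on a \emph{short} transition, and short transitions are precisely what the gramian bounds at scale $\sigma$ do not control by composition --- that is the entire difficulty of this proposition. The gap then propagates: in part ii), both the long forward case and the case $|t-s|<\sigma$ are bounded through $\beta_{1}(\ell)$ with $\ell\geq\sigma$, i.e.\ through the very quantity whose construction presupposes the missing short-transition bound. So short forward bounds rest on $\beta_{1}$, and $\beta_{1}$ rests on short forward bounds: the circularity you set out to avoid is still present, merely displaced into part i).

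The conclusion is of course true, and your own toolkit suffices to close the gap once the steps are reordered. From the pieces of part i) that you did establish soundly --- $\beta_{0}(\sigma')=\beta_{0}(\sigma)$ and the finite $\alpha_{1}(\sigma')$ --- the identity $W(t,t+\sigma')=\Phi_{A}(t,t+\sigma')K(t,t+\sigma')\Phi_{A}^{T}(t,t+\sigma')$ from \eqref{GP} gives the composition-free bound $\|\Phi_{A}(t,t+\sigma')\|\leq\sqrt{\alpha_{1}(\sigma')/\beta_{0}(\sigma)}$ on \emph{backward} transitions over every length $\sigma'\geq\sigma$ (this uses only the lower bound on $K$ and the upper bound on $W$, both available). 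Then short backward transitions follow from $\Phi_{A}(s,s+r)=\Phi_{A}(s,s+r+\sigma)\,\Phi_{A}(s+r+\sigma,s+r)$, a backward transition over $r+\sigma\geq\sigma$ composed with a forward one-step factor of norm at most $M_{f}$; then short forward transitions follow from $\Phi_{A}(s+r,s)=\Phi_{A}(s+r,s+\sigma)\,\Phi_{A}(s+\sigma,s)$, a short backward transition composed with a forward one-step; and only now is $\|\Phi_{A}(t+\sigma',t)\|$ finite (full forward steps times one short factor), which via \eqref{GP} produces $\beta_{1}(\sigma')$, completes part i), and lets your part ii) assembly and the $\alpha(\cdot)\in\mathcal{B}$ argument go through, with every bound depending only on the elapsed time.
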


The above result deserves some comments:
\begin{remark}
% \label{CK0}
The statement i) of Proposition \ref{Prop1} implies the existence of positive functions
$\alpha_{0},\alpha_{1},\beta_{0},\beta_{1}\colon [\sigma,+\infty) \to (0,+\infty)$ related to
\eqref{UC1} and \eqref{UC2}. These maps are used to construct the map $\alpha(\cdot)$ in \eqref{BG}.

\end{remark}

\begin{remark}
% \label{CK} 
Kalman's article \cite{Kalman} does not provide additional properties for the map $\alpha(\cdot)$ from \eqref{BG}. However, as done in the work of B. Zhou \cite[Lemma 4]{Zhou-21} the condition \eqref{BG} with $\alpha(\cdot)\in \mathcal{B}$ is referred as the \textbf{Kalman's condition}. 
\end{remark}

Proposition \ref{Prop1} describes the strong linkage between the UCC and 
the Kalman's condition. In addition, inequality (\ref{BG}) states that any solution $t\mapsto \Phi_{A}(t,t_{0})x_{0}$ of the plant (\ref{lin})
passing through $x_{0}$ at $t=t_{0}$ verifies
\begin{displaymath}
|\Phi_{A}(t,t_{0})x_{0}|\leq \alpha(|t-t_{0}|)|x_{0}|,    
\end{displaymath}
that is, the growth of any solution of (\ref{lin}) over an interval $[t_{0},t]$ (or $[t,t_{0}]$) is dependent of the time elapsed between $t$ and $t_{0}$ but is independent of the initial time $t_{0}$ (or $t$).

The next result describes a set of equivalences with the Kalman's condition:
\begin{lemma}
\label{EKC}
Let $\Phi_{A}(t,s)$ be the transition matrix of the linear system \eqref{lin}. The fo\-llo\-wing properties are equivalent:
\begin{itemize}
\item[i)] There exist constants $K>1$ and $\beta>0$ such that:
\begin{equation}
\label{BG3}
\|\Phi_{A}(t,s)\|\leq Ke^{\beta|t-s|}\quad \textnormal{for any $t,s\in [0,+\infty)$}.
\end{equation}
\item[ii)]  The linear system \eqref{lin} satisfies the Kalman's condition.
\item[iii)] For any $h>0$ there exists $C_{h}>1$ such that any solution $t\mapsto x(t)$ of
\eqref{lin} verifies
\begin{equation}
\label{BG3-eq}
|x(t)|\leq C_{h}|x(s)| \quad \textnormal{for any $t\in [s-h,s+h]$}.
\end{equation} 
\end{itemize}
\end{lemma}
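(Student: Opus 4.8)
The plan is to establish the cycle of implications $(i)\Rightarrow(ii)\Rightarrow(iii)\Rightarrow(i)$, since the first two directions reduce to definition-chasing while the only genuine work lies in the last one.

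For $(i)\Rightarrow(ii)$ I would simply set $\alpha(r)=Ke^{\beta r}$ for $r\ge 0$. This map is continuous and increasing, hence bounded on every bounded subset of $[0,+\infty)$, so $\alpha\in\mathcal{B}$; the inequality \eqref{BG3} then reads exactly as the Kalman's condition $\norm{\Phi_{A}(t,s)}\le\alpha(|t-s|)$. For $(ii)\Rightarrow(iii)$, fix $h>0$ and put $C_{h}=\max\{2,\sup_{0\le r\le h}\alpha(r)\}$, which is finite precisely because $\alpha\in\mathcal{B}$ maps the bounded interval $[0,h]$ into a bounded set, and which exceeds $1$ by construction. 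For any solution $x(\cdot)=\Phi_{A}(\cdot,s)x(s)$ and any $t\in[s-h,s+h]$ one has $|t-s|\le h$, whence $|x(t)|\le\norm{\Phi_{A}(t,s)}\,|x(s)|\le\alpha(|t-s|)\,|x(s)|\le C_{h}|x(s)|$.

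The core of the argument is $(iii)\Rightarrow(i)$. Here I would invoke the hypothesis with the single choice $h=1$, obtaining a constant $C_{1}>1$ with $\norm{\Phi_{A}(t,s)}\le C_{1}$ whenever $|t-s|\le 1$; this is because $|x(t)|\le C_{1}|x(s)|$ for \emph{every} initial state $x(s)$ forces the corresponding bound on the operator norm, uniformly in the base point. For arbitrary $t,s\ge 0$, say $t>s$, I would partition $[s,t]$ by points $s=\tau_{0}<\tau_{1}<\cdots<\tau_{m}=t$ with each gap at most $1$ and $m\le\lfloor t-s\rfloor+1$, and then use the cocycle identity $\Phi_{A}(t,s)=\Phi_{A}(\tau_{m},\tau_{m-1})\cdots\Phi_{A}(\tau_{1},\tau_{0})$ together with submultiplicativity of the norm to obtain $\norm{\Phi_{A}(t,s)}\le C_{1}^{\,m}\le C_{1}^{\,|t-s|+1}=C_{1}\,e^{(\ln C_{1})|t-s|}$; the case $t<s$ is entirely symmetric. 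Setting $K=C_{1}>1$ and $\beta=\ln C_{1}>0$ then yields \eqref{BG3}.

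The main obstacle, although ultimately routine, is the bookkeeping in this final step: one must produce a bound that is independent of the initial time $s$, which is exactly what the decomposition into unit-length blocks delivers (each block contributing the same factor $C_{1}$), and one must verify that the number of blocks grows only linearly in $|t-s|$, so that the accumulated estimate is genuinely exponential rather than of some worse order. Care is also needed to cover the backward direction $t<s$ in \eqref{BG3-eq}, but the symmetry of the interval $[s-h,s+h]$ in the hypothesis of $(iii)$ makes this immediate.
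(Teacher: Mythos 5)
Your proposal is correct and follows essentially the same route as the paper: the easy implications $(i)\Rightarrow(ii)\Rightarrow(iii)$ are handled identically, and for $(iii)\Rightarrow(i)$ your partition of $[s,t]$ into unit-length blocks combined with the cocycle identity is the same chaining argument the paper performs by applying $(iii)$ recursively on subintervals of length $h$ (the paper keeps $h$ general and works with solutions rather than the operator norm, but this is only a cosmetic difference). No gaps; the constants $K=C_{1}$, $\beta=\ln C_{1}$ match the paper's $K=C_{h}$, $\beta=\ln(C_{h})/h$ at $h=1$.
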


\begin{proof}
The proof of
i) $\Rightarrow$ ii) is direct since $u\mapsto \alpha(u)=Ke^{\alpha |u|}$ is continuous
and we can deduce $\alpha(\cdot) \in \mathcal{B}$, that is, $\alpha(\cdot)$ maps bounded sets into bounded sets.

Proof of ii) $\Rightarrow $ iii): let us consider an arbitrary constant $h>0$
and a solution $t\mapsto x(t)$ of (\ref{lin}). Note that (\ref{BG}) implies: 
$$
|\Phi_{A}(t,s)x(s)|=|x(t)|\leq \alpha(|t-s|)|x(s)|.
$$

Now, if  $t\in [s-h,s+h]$, which is equivalent to $h\geq |t-s|$, then the above inequality
implies that
\begin{displaymath}
|x(t)|\leq \sup\limits_{|\theta|\leq h}\alpha(|\theta|)|x(s)|,    
\end{displaymath}
and (\ref{BG3-eq}) is verified with $C_{h}=\max\left\{1+h,\sup\limits_{|\theta|\leq h}\alpha(|\theta|)\right\}$. Note that $C_{h}$ is well defined since $\alpha(\cdot)\in \mathcal{B}$.

Proof of iii) $\Rightarrow$ i): Given a fixed $h>0$ there exists $C_{h}>1$ such that the property iii)
is verified. Firstly, we will suppose that $t\geq s$, then we can assume the existence of $n\in \mathbb{N}$ such that
$t\in [s+(n-1)h,s+nh]$. By a recursive application of the property iii) we can
deduce that
$|x(t)|\leq C_{h}^{n}|x(s)|$. On the other hand, we also can see 
that $n-1\leq \frac{t-s}{h}\leq n$, which allow us to verify that:
$$
|x(t)|=|\Phi_{A}(t,s)x(s)|\leq C_{h}^{n}|x(s)|\leq C_{h}\,e^{\frac{\ln(C_{h})}{h}(t-s)}|x(s)|,
$$
and (\ref{BG3}) is verified with $K=C_{h}>1$ and $\beta=\frac{\ln(C_{h})}{h}>0$. The case $t\leq s$
can be proved in a similar way.
\end{proof}

The properties (i) and (iii) are well known in the qualitative theory of LTV systems. 
Currently are called as \textit{uniform bounded growth} and also allows the limit case $\beta=0$ and $C_{h}=1$. In addition, its equi\-valence has been proved by W. Coppel in \cite[pp. 8--9]{Coppel}. We refer to S. Siegmund \cite{Siegmund-2002} and K.J. Palmer \cite{Palmer} for more details.

As we stated before, the uniform bounded growth property is less known in control theory and, to the best of our knowledge, its equivalence with the Kalman's condition seems not have been noticed in the literature. 

The next result, whose proof has been sketched by Kalman in \cite[p.157]{Kalman}, des\-cribes a more surprising relation between the properties of uniform complete controllability and the Kalman's condition:
\begin{proposition}
\label{Prop2}
If any two of the properties \eqref{UC1},\eqref{UC2} and \eqref{BG} hold, the remaining one is also true.
\end{proposition}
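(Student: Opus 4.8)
The statement unpacks into the three cyclic implications obtained by assuming two of \eqref{UC1}, \eqref{UC2}, \eqref{BG} and deducing the third. The implication ``\eqref{UC1} and \eqref{UC2} $\Rightarrow$ \eqref{BG}'' is exactly the content of Proposition \ref{Prop1} (ii), so the plan is to invoke it directly rather than reprove it; this is in fact the genuinely hard direction, precisely because extracting a global growth bound $\norm{\Phi_A(t,s)}\le\alpha(|t-s|)$ with $\alpha(\cdot)\in\mathcal{B}$ requires composing the window bounds over a ``staircase'' of intervals of length $\sigma$ and then controlling the leftover fractional interval, which is the delicate point handled by the construction of $\alpha(\cdot)$ in Proposition \ref{Prop1}. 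The genuinely new content, which I would prove from scratch, consists of the two symmetric implications ``\eqref{UC1} and \eqref{BG} $\Rightarrow$ \eqref{UC2}'' and ``\eqref{UC2} and \eqref{BG} $\Rightarrow$ \eqref{UC1}'', both resting on the identities \eqref{GP}, which on a window of length $\sigma$ read $K(t,t+\sigma)=\Phi_A(t+\sigma,t)W(t,t+\sigma)\Phi_A^{T}(t+\sigma,t)$ and $W(t,t+\sigma)=\Phi_A(t,t+\sigma)K(t,t+\sigma)\Phi_A^{T}(t,t+\sigma)$, together with the key observation that \eqref{BG} bounds the transition matrix in \emph{both} time directions, since $|t-(t+\sigma)|=\sigma$ whichever way it is read: $\norm{\Phi_A(t+\sigma,t)}\le\alpha(\sigma)$ and $\norm{\Phi_A(t,t+\sigma)}\le\alpha(\sigma)$ for every $t\ge0$.

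For ``\eqref{UC1} and \eqref{BG} $\Rightarrow$ \eqref{UC2}'', I would fix $t\ge0$, abbreviate $\Phi=\Phi_A(t+\sigma,t)$, and apply the first identity as a congruence. For the upper bound, for an arbitrary vector $v$ one has $\langle K(t,t+\sigma)v,v\rangle=\langle W(t,t+\sigma)\Phi^{T}v,\Phi^{T}v\rangle\le\alpha_1(\sigma)|\Phi^{T}v|^{2}\le\alpha_1(\sigma)\alpha(\sigma)^{2}|v|^{2}$, giving $K(t,t+\sigma)\le\beta_1(\sigma)I$ with $\beta_1(\sigma)=\alpha_1(\sigma)\alpha(\sigma)^{2}$. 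The lower bound is the one subtle point: from $\langle K(t,t+\sigma)v,v\rangle\ge\alpha_0(\sigma)|\Phi^{T}v|^{2}$ I would estimate $|\Phi^{T}v|$ from below rather than above, using $|v|=|\Phi^{-T}\Phi^{T}v|\le\norm{\Phi^{-1}}\,|\Phi^{T}v|$, so that $|\Phi^{T}v|\ge|v|/\norm{\Phi^{-1}}$. Here the decisive fact is that $\Phi^{-1}=\Phi_A(t,t+\sigma)$ is again controlled by \eqref{BG}, namely $\norm{\Phi^{-1}}\le\alpha(\sigma)$; this yields $\langle K(t,t+\sigma)v,v\rangle\ge(\alpha_0(\sigma)/\alpha(\sigma)^{2})|v|^{2}$, i.e. $K(t,t+\sigma)\ge\beta_0(\sigma)I$ with $\beta_0(\sigma)=\alpha_0(\sigma)/\alpha(\sigma)^{2}>0$. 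As the four constants are independent of $t$, the bounds \eqref{UC2} hold uniformly.

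The implication ``\eqref{UC2} and \eqref{BG} $\Rightarrow$ \eqref{UC1}'' is then proved verbatim with the roles of $K$ and $W$ interchanged, using the second identity in \eqref{GP} and the hypotheses $\beta_0(\sigma)I\le K(t,t+\sigma)\le\beta_1(\sigma)I$; this produces \eqref{UC1} with $\alpha_1(\sigma)=\beta_1(\sigma)\alpha(\sigma)^{2}$ and $\alpha_0(\sigma)=\beta_0(\sigma)/\alpha(\sigma)^{2}$. I expect the main obstacle of the whole proposition to be the direction ``\eqref{UC1} and \eqref{UC2} $\Rightarrow$ \eqref{BG}'', whose difficulty is exactly why it is isolated in Proposition \ref{Prop1}; within the two new implications the only genuinely nontrivial step is the positivity of the lower gramian bound, where a naive congruence estimate controls $|\Phi^{T}v|$ only from above and is therefore useless, and one must instead feed \eqref{BG} in the \emph{reverse} time direction to bound $\norm{\Phi^{-1}}$. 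Beyond that, everything reduces to applying the congruence $M\mapsto SMS^{T}$ to two-sided spectral bounds, which is routine.
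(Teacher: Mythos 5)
Your proof is correct, and it takes essentially the route the paper itself follows: the hard implication \eqref{UC1}$\,\wedge\,$\eqref{UC2}$\,\Rightarrow\,$\eqref{BG} is delegated to Proposition \ref{Prop1}(ii) (the paper likewise defers Proposition \ref{Prop2} to Kalman's sketch rather than reproving it), while the other two implications rest on the congruences \eqref{GP} together with the key observation that \eqref{BG} controls the transition matrix in both time directions, so that $\|\Phi_A(t+\sigma,t)\|\leq\alpha(\sigma)$ and $\|\Phi_A(t,t+\sigma)\|=\|\Phi_A(t+\sigma,t)^{-1}\|\leq\alpha(\sigma)$. This is precisely the scheme of the paper's proof of the nonuniform analogue (Theorem \ref{p2l3}): one direction via Lemma \ref{NUIMBU}, and the remaining two via \eqref{GP} and the two-sided bounds $\|\Phi_{A}^{T}(t,t+\sigma)\|^{-1}|\eta|\leq|\Phi_{A}^{T}(t+\sigma,t)\eta|\leq\|\Phi_{A}^{T}(t+\sigma,t)\|\,|\eta|$, which is exactly your inverse-norm estimate in the lower Gramian bound.
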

The respective definitions of nonuniform complete controllability and nonuniform bounded growth -which will be detailed and explored in section 3- will allow us to obtain a result similar to the preceding proposition for the nonuniform situation.

\subsection{Feedback stabilizability}

By considering linear inputs of the form
\begin{equation}
\label{LFC}
    u(t)=-F(t)x(t),
\end{equation}
the control system (\ref{control1})  becomes a closed loop LTV system represented by 
\begin{equation}
\label{A-BF}
    \dot{x}(t)=\hat{A}(t)x(t) \quad \textnormal{with} \quad \hat{A}(t)=A(t)-B(t)F(t),
\end{equation}
whose corresponding transition matrix will be denoted by $\Phi_{\hat{A}}(\cdot,\cdot)$ or $\Phi_{A-BF}(\cdot,\cdot)$. The matrix function $F(\cdot)$ in (\ref{LFC}) is called \textit{feedback gain}. A far--reaching question in control theory is to determine the existence of a feedback gain such that the origin is an asymptotically stable solution of (\ref{A-BF}), i.e. an \emph{stabilizing feedback}.\\

For the autonomous framework, the controllability implies that the eigenvalues of the closed loop matrix $\hat A=A-BF$ can be arbitrarily assigned\footnote{Complex eigenvalues must come in conjugate pairs.} with an appropriate feedback matrix $F$. Contrarily to the autonomous case, there exists a myriad of asymptotic stabilities for nonautonomous systems and we refer the reader to \cite{Hahn} and \cite[Ch.VIII]{Rouche} for details. Particularly, a work from Ikeda \textit{et al.} \cite{Ikeda} describes the links between the CC and UCC properties of the control system (\ref{control1}) and the asymptotic stability of the closed loop system (\ref{A-BF}) summarized as follows:

\begin{proposition}\cite[Th.1]{Ikeda}
\label{IMK1}
The linear control system \eqref{control1} is completely controllable if and only if for any initial time
$t_{0}$ and any continuous and monotonically nondecreasing function $t\mapsto \delta(t,t_{0})$ defined
for any $t\geq t_{0}$ such that $\delta(t_{0},t_{0})=0$, there exist a feedback gain $F(t)$ defined for $t\geq t_{0}$ and a number $a(t_{0})>0$ such that
\begin{equation}
\label{A-BF_cs}
\|\Phi_{A-BF}(t,t_{0})\|\leq a(t_{0})e^{-\delta(t,t_{0})} \quad \textnormal{for all $t\geq t_{0}$}. 
\end{equation}
\end{proposition}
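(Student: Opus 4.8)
The statement is an equivalence, so I would prove the two implications separately. The direction ``feedback property $\Rightarrow$ CC'' is short, while ``CC $\Rightarrow$ feedback property'' carries the real content.

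For the easy direction, suppose the system is \emph{not} completely controllable. Then there is a time $t_0$ at which it fails to be controllable, i.e.\ $W(t_0,t_f)$ is singular for every $t_f>t_0$. Since the subspaces $\ker W(t_0,t_f)$ form a nested non-increasing family of nonzero subspaces of $\R^n$, they stabilize to a common nonzero subspace, so there is a fixed $v\neq 0$ with $B^T(s)\Phi_A^T(t_0,s)v=0$ for a.e.\ $s\ge t_0$. Such a direction is invisible to any feedback: writing $w(t)=\Phi_A^T(t_0,t)v$, the identity $B^T(t)w(t)=0$ forces $w$ to solve the adjoint equation of the closed loop $\dot x=(A-BF)x$ for \emph{every} gain $F$, whence $\Phi_{A-BF}^T(t_0,t)v=\Phi_A^T(t_0,t)v$ and therefore $\|\Phi_{A-BF}(t,t_0)\|\ge |v|/|\Phi_A^T(t_0,t)v|$. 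The right-hand side is a fixed positive function of $t$, so by choosing $\delta(\cdot,t_0)$ to grow faster than $\ln|\Phi_A^T(t_0,\cdot)v|$ (possible since $\delta$ ranges over all continuous nondecreasing functions vanishing at $t_0$) the prescribed bound $a(t_0)e^{-\delta(t,t_0)}$ is eventually violated for \emph{any} finite $a(t_0)$, a contradiction. Hence the feedback property forces CC.

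For the main direction, fix $t_0$ and an admissible $\delta$. Since any continuous $\delta$ is dominated by a $C^1$ nondecreasing majorant vanishing at $t_0$, and the bound for the majorant is stronger, it suffices to treat $C^1$ rates. I would then remove the prescribed rate by the gauge change $y(t)=e^{\delta(t,t_0)}x(t)$, which turns system \eqref{control1} into $\dot y=(A(t)+\dot\delta(t,t_0)I)y+B(t)\tilde u$. Its plant differs from $A$ only by a scalar multiple of $I$, so it is again completely controllable (the gramian merely acquires the positive weight $e^{-2\delta}$), and since $u=-Fx$ corresponds to $\tilde u=-Fy$ the transition matrices satisfy $\Phi_{A-BF}(t,t_0)=e^{-\delta(t,t_0)}\Phi_{(A+\dot\delta I)-BF}(t,t_0)$. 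This reduces everything to the rate-free core statement: \emph{a completely controllable system admits a feedback whose closed-loop transition matrix is bounded by a constant $a(t_0)$}. To prove the core statement I would use complete controllability to choose $t_0<t_1<t_2<\cdots\to\infty$ with $W(t_k,t_{k+1})>0$, and on each slab $[t_k,t_{k+1}]$ install the feedback $\tilde u=-B^TP(t)y$ coming from the finite-horizon quadratic problem with running cost $|\tilde u|^2$ and a large terminal penalty $M_k|y(t_{k+1})|^2$. Controllability keeps the Riccati solution $P$ bounded on the slab while forcing a strict contraction $\|\Phi_{\mathrm{cl}}(t_{k+1},t_k)\|\le\rho<1$ with bounded gain; concatenation then gives geometric decay at the grid points, the bounded gain controls the growth between them, and collecting the (slab-dependent) constants produces a single $a(t_0)$. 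Undoing the gauge change yields exactly $\|\Phi_{A-BF}(t,t_0)\|\le a(t_0)e^{-\delta(t,t_0)}$.

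The main obstacle is precisely this core construction. Exact finite-time steering to the origin would be formally simpler but produces a gain blowing up at each $t_{k+1}$; one must instead obtain a genuine \emph{bounded-gain} feedback that simultaneously contracts the state and keeps the inter-slab growth under control, and then verify that the countable concatenation delivers a bound valid on the whole continuum $[t_0,\infty)$ rather than merely at the grid points $t_k$. Tracking how all contraction factors, Riccati bounds and slab lengths depend on $t_0$, so that they assemble into a single finite $a(t_0)$ with the nonuniformity confined to that constant, is where the care is needed.
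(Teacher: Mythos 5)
Your ``feedback property $\Rightarrow$ CC'' direction is correct: the nested kernels of $W(t_0,t_f)$ stabilize to a nonzero subspace, any $v\neq 0$ in it satisfies $B^T(s)\Phi_A^T(t_0,s)v=0$ a.e., hence $\Phi_{A-BF}^T(t_0,t)v=\Phi_A^T(t_0,t)v$ for \emph{every} gain $F$, and a rate $\delta$ chosen to grow faster than $\ln|\Phi_A^T(t_0,\cdot)v|$ defeats any candidate bound \eqref{A-BF_cs}. For context: the paper itself does not prove this proposition, it quotes \cite[Th.1]{Ikeda}, so the comparison is with Ikeda's argument as the paper reports it. Your reduction in the converse direction (smooth nondecreasing majorant of $\delta$, gauge change $y=e^{\delta}x$ giving the shifted plant $A+\dot\delta I$, which is again CC) is exactly the first half of that argument. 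Where you diverge is the ``core statement'': Ikeda does not concatenate slab-wise LQ problems, but solves a \emph{single} infinite-horizon Riccati equation for the shifted pair, whose solution exists under mere CC as the limit $S(t)=\lim_{t_1\to\infty}\Pi(t,0,t_1)$ of finite-horizon solutions with zero terminal cost (Kalman's Proposition 6.6; cf.\ Remark \ref{exist-ricc} and \eqref{ER}), takes the feedback $u=-B^TSx$, and reads the bound off a Lyapunov argument --- which is why the constant reported after Theorem \ref{T2} is $a(t_0)=[1+\lambda_{\max}(S(t_0))]^{1/2}$. That route needs none of the bookkeeping your construction requires.

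The genuine gap is in your concatenation. You claim a uniform contraction $\|\Phi_{\mathrm{cl}}(t_{k+1},t_k)\|\le\rho<1$ with per-slab bounded gain and then assert that ``collecting the (slab-dependent) constants produces a single $a(t_0)$''. That inference fails as stated: since $A$ and $B$ are only locally bounded and the controllability windows are arbitrary, the within-slab growth constants $G_k=\sup_{t\in[t_k,t_{k+1}]}\|\Phi_{\mathrm{cl}}(t,t_k)\|$ can grow arbitrarily fast in the slab index $k$ \emph{for fixed} $t_0$ (your closing paragraph locates the nonuniformity in $t_0$, but this is the real issue), and $G_k\rho^k$ need not be bounded --- take $G_k=4^k$, $\rho=1/2$. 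To repair it you must choose the terminal penalties $M_k$ adaptively so that the accumulated contraction dominates the \emph{next} slab's growth, $\prod_{j\le k}\rho_j\le a/G_{k+1}$; and this prescription is non-circular only if you first show that $G_{k+1}$ admits a bound \emph{independent of} $M_{k+1}$, e.g.\ via $\|\Phi_{\mathrm{cl}}(t,t_k)\|^2\le\lambda_{\max}\bigl(P(t_k)\bigr)/\lambda_{\min}\bigl(P(t)\bigr)$, where $\lambda_{\max}(P(t_k))$ is bounded by the minimum energy needed to steer to the origin (so the terminal penalty is never paid) and $\lambda_{\min}(P(t))$ is bounded below by controllability data of the slab, both uniformly in the penalty. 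Neither the quantifier order nor the penalty-independent growth bound appears in your sketch; without them the step fails, and with them your proof closes --- but at that point the single-Riccati route of \cite{Ikeda} is strictly simpler.
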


The property (\ref{A-BF_cs}) characterizes the \textit{complete stabilizability} of system (\ref{control1}) and $t\mapsto \delta(t,t_{0})$ is called \textit{measure of decay} in \cite{Ikeda}.
The CC is equivalent to the existence of a feedback gain $F(t)$ such that the closed loop system (\ref{A-BF}) is asymptotically stable for any measure of decay previously designed. For alternative approaches relating CC and feedback stabilizability, we refer to \cite[Th.3.2]{Anderson}.
\begin{proposition}\cite[Th.2]{Ikeda}
\label{IMK2}
If the linear control system \eqref{control1} is uniformly completely controllable, then for any $m>0$ there exists a feedback $F(t)$ defined for any $t\geq 0$ and a positive number $M$ such that
\begin{displaymath}
\|\Phi_{A-BF}(t,t_{0})\|\leq Me^{-m(t-t_{0})} \quad \textnormal{for all $t_{0}\geq 0$ and $t\geq t_{0}$}.    
\end{displaymath}
\end{proposition}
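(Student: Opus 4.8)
The plan is to convert the prescribed decay rate $m>0$ into a \emph{uniform boundedness} problem by an exponential change of variables. Writing $A_{m}(t)=A(t)+mI$, if $x(\cdot)$ solves \eqref{control1} with input $u$ then $y(t)=e^{m(t-t_{0})}x(t)$ solves $\dot y=A_{m}(t)y+B(t)\tilde u$ with $\tilde u(t)=e^{m(t-t_{0})}u(t)$; the plant $\dot y=A_{m}(t)y$ has transition matrix $\Phi_{A_{m}}(t,s)=e^{m(t-s)}\Phi_{A}(t,s)$. Since $A_{m}-BF=(A-BF)+mI$, the same scalar factor survives after closing the loop with any gain $F$, so that
\begin{equation}
\Phi_{A-BF}(t,t_{0})=e^{-m(t-t_{0})}\,\Phi_{A_{m}-BF}(t,t_{0}).
\end{equation}
Hence it is enough to produce one gain $F$ for which the shifted closed loop has a transition matrix bounded by a constant $\tilde M$ uniformly in $t_{0}$ and $t\geq t_{0}$: the factor $e^{-m(t-t_{0})}$ then delivers the exponential estimate with $M=\tilde M$ independent of $t_{0}$.

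First I would verify that the shifted control system $(A_{m},B)$ is again UCC, so that uniform gramian constants are at hand. Its controllability gramian equals $W_{A_{m}}(t,t+\sigma)=\int_{t}^{t+\sigma}e^{2m(t-s)}\Phi_{A}(t,s)B(s)B^{T}(s)\Phi_{A}^{T}(t,s)\,ds$, and because $e^{2m(t-s)}\in[e^{-2m\sigma},1]$ for $s\in[t,t+\sigma]$, the bounds \eqref{UC1} for $(A,B)$ give $e^{-2m\sigma}\alpha_{0}(\sigma)I\leq W_{A_{m}}(t,t+\sigma)\leq\alpha_{1}(\sigma)I$ for all $t\geq0$. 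By Proposition \ref{Prop1} the original plant satisfies the Kalman's condition \eqref{BG}, whence $\|\Phi_{A_{m}}(t,s)\|\leq e^{m|t-s|}\alpha(|t-s|)$, which is again of the form \eqref{BG} with the map $u\mapsto e^{mu}\alpha(u)\in\mathcal{B}$. Having \eqref{UC1} and \eqref{BG} for $(A_{m},B)$, Proposition \ref{Prop2} supplies the remaining inequality \eqref{UC2}, so $(A_{m},B)$ is UCC.

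Next comes the heart of the argument: the explicit uniform stabilizer for the UCC system $(A_{m},B)$. Abbreviating $W_{m}(t)=W_{A_{m}}(t,t+\sigma)$, I would take the gain $F(t)=B^{T}(t)W_{m}^{-1}(t)$, which is measurable, well defined and uniformly bounded thanks to the two--sided bounds just obtained, and the candidate $V(t,y)=y^{T}W_{m}^{-1}(t)y$. Differentiating $W_{m}$ by the Leibniz rule on the moving window $[t,t+\sigma]$ yields $\dot W_{m}=A_{m}W_{m}+W_{m}A_{m}^{T}-BB^{T}+R_{m}$ with the nonnegative look--ahead term $R_{m}=\Phi_{A_{m}}(t,t+\sigma)B(t+\sigma)B^{T}(t+\sigma)\Phi_{A_{m}}^{T}(t,t+\sigma)$; combining this with $\tfrac{d}{dt}W_{m}^{-1}=-W_{m}^{-1}\dot W_{m}W_{m}^{-1}$ and the closed--loop dynamics $\dot y=(A_{m}-BB^{T}W_{m}^{-1})y$, all the $A_{m}$--terms cancel and one is left with $\dot V=-z^{T}(BB^{T}+R_{m})z\leq0$, where $z=W_{m}^{-1}y$. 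Monotonicity of $V$ together with $e^{-2m\sigma}\alpha_{0}(\sigma)I\leq W_{m}(t)\leq\alpha_{1}(\sigma)I$ then gives $|y(t)|\leq e^{m\sigma}\sqrt{\alpha_{1}(\sigma)/\alpha_{0}(\sigma)}\,|y(t_{0})|$, i.e. $\|\Phi_{A_{m}-BF}(t,t_{0})\|\leq\tilde M$ uniformly in $t_{0}$. The displayed identity then closes the proof with $M=\tilde M$.

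I expect the main obstacle to be this Lyapunov computation: the derivative of $W_{m}^{-1}(t)$ mixes the moving endpoints of the integral with the $t$--dependence of $\Phi_{A_{m}}(t,\cdot)$, so the cancellation leading to $\dot V\leq0$ must be carried out carefully and the nonnegativity of the look--ahead term $R_{m}$ verified. It is precisely here that \emph{uniform} complete controllability, rather than plain complete controllability, is indispensable: the gramian bounds $\alpha_{0},\alpha_{1}$ are independent of $t_{0}$, which is what turns the Lyapunov estimate into a bound $\tilde M$ free of $t_{0}$; by contrast Proposition \ref{IMK1} applied to $(A_{m},B)$ would only furnish a $t_{0}$--dependent constant $a(t_{0})$.
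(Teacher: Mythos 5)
Your proof is correct, and it takes a genuinely different route from the one the paper relies on. The paper never proves Proposition \ref{IMK2} itself: it quotes it from Ikeda \emph{et al.} \cite[Th.2]{Ikeda}, whose argument (mirrored in the paper's own nonuniform analogue, Theorem \ref{T2}) also starts with the shift $A\mapsto A+mI$ (cf.\ Lemma \ref{L1}), but then builds the gain from a solution $S(t)$ of a Riccati equation of type \eqref{ER}, bounds $S(t)$ from above and below through gramian estimates (Ikeda's Lemma 3, the matrices $D(t)$, $E(t)$ appearing in the proof of Theorem \ref{T2}), and concludes with the Lyapunov function $y^{T}S(t)y$ (Proposition \ref{T3} is the nonuniform version of that step). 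You keep the shift but replace the whole Riccati layer by the explicit gramian--inverse gain $F=B^{T}W_{m}^{-1}$ and the Lyapunov function $V=y^{T}W_{m}^{-1}y$; the cancellation you flagged as the main obstacle does work out: with $z=W_{m}^{-1}y$ the Leibniz differentiation of $W_{m}$ and $\tfrac{d}{dt}W_{m}^{-1}=-W_{m}^{-1}\dot W_{m}W_{m}^{-1}$ give exactly $\dot V=-z^{T}\bigl(BB^{T}+R_{m}\bigr)z\leq 0$, and mere monotonicity of $V$ suffices because the prescribed rate $m$ was already absorbed by the exponential change of variables. What your route buys is a self-contained, elementary proof with explicit constants, $\tilde M=e^{m\sigma}\sqrt{\alpha_{1}(\sigma)/\alpha_{0}(\sigma)}$, and no appeal to existence theory for Riccati equations; what the Riccati route buys is machinery that survives when the gramian bounds are no longer uniform in $t$, which is precisely what the paper needs for Theorem \ref{T2}. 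Two minor blemishes, neither fatal: your side claim that $F$ is \emph{uniformly} bounded would require $B$ itself to be uniformly bounded, which the paper does not assume (only boundedness on finite intervals) --- but the proposition does not ask for a bounded gain, so nothing is lost; and your verification of \eqref{UC2} for $(A_{m},B)$ via Proposition \ref{Prop2} is never used afterwards, since the Lyapunov argument only consumes the $W$--bounds \eqref{UC1}.
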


As we can see, the UCC implies the existence of a feedback gain $F(t)$ such that the closed loop
system (\ref{A-BF}) is uniformly exponentially stable for any rate of exponential decay previously chosen. 
A converse result is also proved in \cite[Th.3]{Ikeda} provided that
$A(\cdot)$ and $B(\cdot)$ are bounded matrices. For complementary approaches relating UCC and feedback stabilizability, we refer the reader to \cite[Prop. 4.3]{Ilchmann} and \cite[Th.4]{Zhou-21}.

It is important to emphasize that Propositions \ref{IMK1} and \ref{IMK2} are respectively asso\-cia\-ted to the
properties of \textit{complete stabilizability} \cite[Def.3]{Ikeda} and \textit{uniform complete stabilizability} \cite[Def.4]{Ikeda}. Nevertheless, there exist several related definitions of stabilizability and we refer the reader to \cite[Sec.2.3]{Anderson},\cite[Def.2.2]{Phat} and \cite[Def.2.8]{Rotea}.

\subsection{Exponential dichotomy and bounded growth properties}

The properties of exponential dichotomies are ubiquitous in the study of linear systems (\ref{lin}) but its use in linear control systems is less widespread. We will see that it provides us an interesting way to study
the relation between complete controllability and feedback stabilization. Let us recall its definition
for an arbitrary linear system
\begin{equation}
\label{ALS}
\dot{x}=V(t)x
\end{equation}
but having in mind the plant (\ref{lin}) and the closed loop system (\ref{A-BF}).

\begin{definition}
\label{Ed-A}
The system \eqref{ALS} has a uniform exponential dichotomy (UED) on $\mathbb{R}_{0}^{+}:=[0,+\infty)$ if there exist a projector 
$P(\cdot)$ and a couple of constants  $\mathcal{M}\geq 1$ and $\lambda >0$ such that
\begin{displaymath}
\left\{\begin{array}{rcll}
P(t)\Phi_{V}(t,t_{0})&=&\Phi_{V}(t,t_{0})P(t_{0})  & \textnormal{for all $t,t_{0}\geq 0$},\\
\|\Phi_{V}(t,t_{0})P(t_{0})\|&\leq & \mathcal{M}e^{-\lambda (t-t_{0})}  &  \textnormal{for all $t\geq t_{0}\geq 0$},\\
\|\Phi_{V}(t,t_{0})[I-P(t_{0})]\| &\leq & \mathcal{M}e^{-\lambda (t_{0}-t)} &  \textnormal{for all $t_{0}\geq t\geq 0$}.
\end{array}\right.
\end{displaymath}
\end{definition}

We refer the reader to \cite{Coppel,Lin2,Mitropolski} for additional details. Moreover, the above property can be seen as a special case of the nonuniform exponential dichotomy, defined as follows:

\begin{definition}
\label{NUEd-A}
The system \eqref{ALS} has a nonuniform exponential dichotomy (NUED) on $\mathbb{R}_{0}^{+}$ if there exists a projector 
$P(\cdot)$ together with three constants $\mathcal{M}\geq 1$, $\lambda >0$ and $\varepsilon \in [0,\lambda)$ such that
\begin{displaymath}
\left\{\begin{array}{rcll}
P(t)\Phi_{V}(t,t_{0})&=&\Phi_{V}(t,t_{0})P(t_{0})  &  \textnormal{for all $t,t_{0}\geq 0$},\\
\|\Phi_{V}(t,t_{0})P(t_{0})\|&\leq & \mathcal{M}e^{\varepsilon t_{0}}e^{-\lambda (t-t_{0})}  &  \textnormal{for all $t\geq t_{0}\geq 0$},\\
\|\Phi_{V}(t,t_{0})[I-P(t_{0})]\| &\leq & \mathcal{M}e^{\varepsilon t_{0}}e^{-\lambda (t_{0}-t)} &  \textnormal{for all $t_{0}\geq t\geq 0$}.
\end{array}\right.
\end{displaymath}
\end{definition} 

The exponential dichotomy mimics the hiperbolicity condition of the au\-to\-no\-mous case in the sense that
the projector splits the solutions of (\ref{ALS}) between contractive (stable) and expansive (unstable). Despite the apparent similarity between definitions \ref{Ed-A} and \ref{NUEd-A}, we stress that, in the uniform case, contractions as well as expansions depend only on the elapsed time between $t$ and $t_{0}$ while; in the nonuniform framework, they are also dependent of $t_{0}$. This motivates to denote $e^{\varepsilon t_{0}}$ 
as the \textit{nonuniformity}, which induces qualitative differences.

There are spectral theories constructed on the basis of these dichotomy properties. The key definitions
are the following:
\begin{definition}
\label{Espectro}
The uniform exponential dichotomy spectrum of the linear system \eqref{ALS} is defined by the set
\begin{displaymath}
\Sigma_{U}(V)=\left\{ \gamma \in \mathbb{R} \colon \dot{x}=[V(t)-\gamma I]x \quad \textnormal{has not a UED on $\mathbb{R}_{0}^{+}$}\right\}.    
\end{displaymath}
\end{definition}

\begin{definition}
 \label{NUED}
The nonuniform nonuniform exponential dichotomy spectrum 
of the linear system \eqref{ALS} is defined by the set
\begin{displaymath}
\Sigma_{NU}(V)=\left\{ \gamma \in \mathbb{R} \colon \dot{x}=[V(t)-\gamma I]x \quad \textnormal{has not a NUED on $\mathbb{R}_{0}^{+}$}\right\}.    
\end{displaymath}
\end{definition}

The spectrum $\Sigma_{U}(V)$ has been deeply studied and we refer to \cite{Klo,Siegmund-2002,Siegmund2} for
a detailed description and characterization. The spectrum $\Sigma_{NU}(V)$ has been studied in \cite{Chu,Zhang}, where the authors emulate the previous results and refer to \cite{Silva2023} and \cite{Zhu} for additional results.

The spectra $\Sigma_{U}(V)$ and $\Sigma_{NU}(V)$ have been characterized as the finite union of closed intervals, also called \textit{spectral intervals}, which mimic the role of the real part of the eigenvalues. These spectra are not necessarily bounded but it is well known that $\Sigma_{U}(V)$ is bounded when (\ref{ALS}) has the property of uniform bounded growth defined in (\ref{BG3}). Similarly, the spectrum $\Sigma_{NU}(V)$ is bounded when (\ref{ALS}) has the property of nonuniform bounded growth defined as follows:
\begin{definition} 
 \label{NUBGT}
The linear system \eqref{ALS} has a nonuniform bounded growth on the interval $J\subset [0,+\infty)$ if there exist constants $K_{0}>0$, $a>0$ and $\eta>0$ such that its transition matrix satisfies
\begin{equation}
\label{Phibound}
\|\Phi_{V}(t,\tau)\|\leq K_{0}e^{\eta \tau }e^{a|t-\tau|}\quad \textnormal{for any $t,\tau\in J$}.
\end{equation}
\end{definition}

We refer to \cite{Chu} and \cite{Zhou-JDE} for more details about the nonuniform bounded growth property. An example of this property has been considered in \cite[p.21]{Barreira}:
\begin{equation*}
% \label{ex-BV}
\dot{x}=[\lambda_0+at\sin(t)]x \quad \textnormal{with $\lambda_{0}<a<0$}.
\end{equation*}

In fact, its transition matrix is given by
\begin{displaymath}
\Phi(t,s)=e^{\lambda_{0}(t-s)-a\cos(t)(t-s)-as(\cos(t)-\cos(s))+a(\sin(t)-\sin(s)))},
\end{displaymath}
and, it can be proved that
\begin{displaymath}
|\Phi(t,s)|\leq e^{2|a|s}e^{(|\lambda_{0}|+2|a|)|t-s|},    
\end{displaymath}
while the Kalman's condition (\ref{BG}) is not verified. Indeed, otherwise, by Lemma \ref{EKC} it would exist
constants $K\geq 1$ and $\alpha>0$ such that
$$
|\Phi(t,s)|\leq Ke^{\alpha|t-s|},
$$
that is, 
\begin{displaymath}
\lambda_{0}(t-s)-a\cos(t)(t-s)-as\{\cos(t)-\cos(s)\}+a(\sin(t)-\sin(s))) \leq  \ln(K)+\alpha |t-s|.
\end{displaymath}

The above expression is true when $t=s$. Now, the inequality 
\begin{displaymath}
\left\{\lambda_{0}-a\cos(t)\right\}\frac{(t-s)}{|t-s|}-as\frac{\{\cos(t)-\cos(s)\}}{|t-s|}+a\frac{(\sin(t)-\sin(s)))}{|t-s|} \leq  \frac{\ln(K)}{|t-s|}+\alpha,
\end{displaymath}
must be verified for any $t\neq s$, in particular, for any couple of sequences $\{t_{n}\}_{n}$ and $\{s_{n}\}_{n}$ such that $t_{n}\neq s_{n}$ for any $n\in \mathbb{N}$. Now, when considering the sequences $t_{n}=2n\pi$ and $s_{n}=2n\pi-\frac{\pi}{2}$ and evaluate at $t=t_{n}$ and $s=s_{n}$, it follows that $t_{n}-s_{n}=\frac{\pi}{2}$ and we have 
% \begin{displaymath}
% \left\{\lambda_{0}-a\cos(t_{n})\right\}\frac{(t_{n}-s_{n})}{|t_{n}-s_{n}|}-as_{n}\frac{\{\cos(t_{n})-\cos(s_{n})\}}{|t_{n}-s_{n}|}+a\frac{(\sin(t_{n})-\sin(s_{n}))}{|t_{n}-s_{n}|} \leq  \frac{\ln(K)}{|t_{n}-s_{n}|}+\alpha,
% \end{displaymath}
% which is equivalent to
\begin{displaymath}
\lambda_{0}-a-a\left(2n\pi-\frac{\pi}{2}\right)\frac{2}{\pi}+a\frac{2}{\pi} \leq  2\frac{\ln(K)}{\pi}+\alpha \quad \textnormal{for any $n\in \mathbb{N}$},
\end{displaymath}
finally, as $a<0$, by letting $n\to +\infty$ we will obtain a contradiction.\\

\subsection{Nonuniform asymptotic stability properties}
Contrarily to the au\-to\-no\-mous case, there is not a univocal definition of asymptotic stability and there exist several ways to characterize them. We will focus on a limit case of the nonuniform exponential dichotomy studied previously. The formal definitions are:
\begin{definition}
\label{Nuni}
The closed loop system \eqref{A-BF} is:
\begin{itemize}
\item[i)]
$\tilde{a}(t_{0})$--nonuniformly exponentially stable if there exist constants $\tilde{a}(t_{0})>0$, $\mathcal{M}\geq 1$, $\lambda >0$ such that
\begin{equation}
\label{StaNonU-a}
\|\Phi_{A-BF}(t,t_{0})\|\leq \mathcal{M}\tilde{a}(t_{0})e^{-\lambda(t-t_{0})} \quad \textnormal{for all}\; t\geq t_{0}\geq 0.
\end{equation}
\item[ii)]
$e^{\varepsilon t_{0}}$--nonuniformly exponentially stable if there exist constants $\mathcal{M}\geq 1$, $\lambda >0$ and $\varepsilon \in [0,\lambda)$ such that
\begin{equation}
\label{StaNonU}
\|\Phi_{A-BF}(t,t_{0})\|\leq \mathcal{M}e^{\varepsilon t_{0}}e^{-\lambda(t-t_{0})} \quad \textnormal{for all}\; t\geq t_{0}\geq 0.
\end{equation}
\end{itemize}
\end{definition}

Note that the nonuniform exponential stability described by (\ref{StaNonU}) can be seen as a limit case of Definition \ref{NUEd-A} with $P(s)=I$. Similarly, when $\varepsilon=0$ in (\ref{StaNonU})
we obtain a limit case of Definition \ref{Ed-A} with $P(s)=I$ which coincides with the uniform exponential stability.

The numbers $\tilde{a}(t_{0})$ and $e^{\varepsilon t_{0}}$ are called \textit{nonuniformities}
and constitute the main difference between uniform and nonuniform exponential stabilities.
In fact, at is was pointed out in \cite[p.101]{Rugh}, for the uniform situation  the exponential decay is only dependent of the elapsed time $t-t_{0}$  while, in (\ref{StaNonU-a})--(\ref{StaNonU}), this decay is also dependent of the initial time $t_{0}$, considered by the nonuniformities. For additional details, we refer the reader to \cite{Zhou-16},\cite{Zhou-17} and \cite{Zhou-21}.

Note that the stabilization for the closed loop system (\ref{A-BF}) described by (\ref{StaNonU-a}) is a particular
case of the complete stabilizability described by Proposition \ref{IMK1} by considering
a specific measure of decay $\delta(t,t_{0})=\lambda (t-t_{0})$.

\begin{remark}
\label{dico}
Despite that the stability described by \eqref{StaNonU} is a particular case of \eqref{StaNonU-a}, we reserve a special treatment for it since corresponds to the purely contractive case of the nonuniform exponential dichotomy of Definition \ref{NUEd-A}. In this sense, given a $F(\cdot)$ such that the closed loop system  \eqref{A-BF} is $e^{\varepsilon t_{0}}$--nonuniformly exponentially stable, this is equivalent to the spectral property $$\Sigma_{NU}(A-BF)\subset (-\infty,0).$$
\end{remark}

\section{Controllability in a nonuniform
framework}

A strong consequence of the Propositions \ref{Prop1} and \ref{Prop2} stated in the previous section is that, if the plant of the linear control system (\ref{control1})
 does not verify the Kalman's condition (\ref{BG}), then the control system cannot be uniformly completely controllable but still can be completely controllable. In this context, and assuming that the property (\ref{BG}) is not verified, an open problem is:
 
\begin{itemize}
\item[a)] To explore the existence of a set of controllability properties halfway the complete controllability and uniform complete controllability.

\item[b)] Given a controllability property as in the above sense, to determine the asymptotic stability of the corresponding closed loop system, which must be stronger than the one stated in Proposition \ref{IMK1} but weaker than those stated in Proposition \ref{IMK2}.
\end{itemize}

%There exist multiple ways to propose controllability properties having the above features. We will introduce a property of %\textit{nonuniform complete controllability}, which will be defined in term
%of gramian inequalities generalizing (\ref{UC1})--(\ref{UC2}). In addition, we will show results 
%of characterization and feedback stabilization which can be seen as versions of Propositions \ref{Prop1}
%and \ref{Prop2} adapted to a NUCC framework.\\

There exist multiple ways to propose controllability properties having the above features. In Lemma \ref{EKC}, we proved that the classical Kalman's condition is actually the uniform bounded growth condition. We can re-interpret Kalman's Proposition \ref{Prop2} saying that inequalities \eqref{UC1}-\eqref{UC2}, along with the uniform bounded growth \eqref{BG3} form a trifecta. So, in order to move into the nonuniform situation, we will take the nonuniform bounded growth property \eqref{StaNonU} and introduce a \emph{nonuniform complete controllability} notion, also based on gramian inequalities, prompting to an extension of Propositions \ref{Prop1} and \ref{Prop2}.

\subsection{Nonuniform complete controllability: definition and consequences}

% \textcolor{blue}{\begin{definition} 
% \label{NUBGT}
% Given $m>0$, the LTV system (\ref{lin}) has a $m-$nonuniform bounded growth on the interval $J$ if there exists constants $K_{0}>0$, $a>0$ and $\eta\geq 0$, with $\max\{a,\eta\}<m$, such that its transition matrix satisfies
% \begin{equation}
% \label{Phibound}
% ||\Phi(t,\tau)||\leq K_{0}e^{\eta |\tau| }e^{a|t-\tau|}\quad \textnormal{for any $t,\tau\in J$}.
% \end{equation}
% \end{definition}
% }

%\begin{definition}
%\label{DEFNUCC}
%The LTV control system (\ref{control1}) is said to be nonuniformly completely controllable, if its plant has the property of nonuniform bounded growth and there are fixed numbers $\sigma>0$, $\alpha_1(\sigma)$, $\alpha_2(\sigma)$ and $\mu\geq0$, such that for any $t\geq0$:
%\begin{equation}
%\label{EstGraNU}
%    0<\alpha_1(\sigma)I\leq W(t,t+\sigma)\leq \alpha_2(\sigma)e^{2\mu t}I.
%\end{equation}
%\end{definition}

\begin{definition}
% \label{DEFNUCC2}
The linear control system \eqref{control1} is said to be \textbf{nonuniformly completely controllable}, if there exist fixed numbers $\mu_0\geq0$, $\mu_1\geq0$, $\tilde{\mu}_0\geq0$, $\tilde{\mu}_1\geq0$ and functions $\alpha_0(\cdot), \beta_{0}(\cdot),\alpha_1(\cdot),\beta_1(\cdot):[0,+\infty)\to(0,+\infty)$ such that for any $t\geq 0$, there exists $\sigma_0(t)>0$ with:
\begin{equation}
\label{alpha0alpha1}
    0<e^{-2\mu_0 t}\alpha_0(\sigma)I\leq W(t,t+\sigma)\leq e^{2\mu_1 t}\alpha_1(\sigma)I.
\end{equation}
\begin{equation}
\label{K}
    0<e^{-2\tilde{\mu}_0 t}\beta_0(\sigma) I\leq K(t,t+\sigma)\leq e^{2\tilde{\mu}_1 t}\beta_1(\sigma) I, 
\end{equation}
for every $\sigma\geq \sigma_{0}(t)$.
\end{definition}

\begin{remark}
    \label{Rmu}
In the context of the above property,
note that:
\begin{itemize}
\item[a)] If $\mu:=\max\{\mu_0, \tilde{\mu}_0, \mu_1, \tilde{\mu}_1\}$, the estimates \eqref{alpha0alpha1} and \eqref{K} can be stated in terms of $\mu$. Also, as done in the dichotomy $\&$ stability literature \cite{Barreira,Chu,Zhou-16}, the numbers $\mu_0$, $\tilde{\mu}_0$, $\mu_1$ and $\tilde{\mu}_1$ will be called the gramian nonuniformities.
\item[b)] When $\mu=0$ and $t\mapsto \sigma_0(t)$ is constant for all $t\geq0$, we recover the uniform complete controllability.  
\item[c)] By \eqref{alpha0alpha1} or \eqref{K}, it is straightforward to verify that NUCC implies CC.
\item [d)] By using \eqref{GP}, it follows directly that \eqref{K} is equivalent to 
\begin{equation}
    \label{beta0beta1}
 0<e^{-2\tilde{\mu}_0 t}\beta_0(\sigma) I\leq  \Phi_{A}(t+\sigma,t)W(t,t+\sigma)\Phi_{A}^{T}(t+\sigma,t) \leq e^{2\tilde{\mu}_1 t}\beta_1(\sigma) I.   
\end{equation}

\item[e)] Given an initial time $t\geq0$, there exists $\sigma_0(t)>0$ such that any state $x$ can be transferred to the origin in an interval $[t,t+\sigma_{0}(t)]$. Nevertheless, the energy remarks stated by Kalman are no longer valid since the bounds of $W(t,t+\sigma_0(t))$ are not uniform with respect to the initial time. We refer to Appendix \ref{app:energy} for additional details. 

\item[f)] In order to facilitate the writing, in some occasions when the context requires it, we will use the notation  $\sigma_{t}:=\sigma_0(t)$.
\end{itemize}
\end{remark}
% with $\max\{\mu_0,\mu_1\}$.

If the linear control system (\ref{control1}) is nonuniformly completely controllable, we can see
that its corresponding plant admits properties and restrictions related to Propositions \ref{Prop1} y \ref{Prop2}, which 
will be described by the next results.

\begin{lemma}
\label{contasMT}
If the control system \eqref{control1} is nonuniformly completely controllable
with nonuniformities $\{\mu_0, \tilde{\mu}_0, \mu_1, \tilde{\mu}_1\}$ and positive functions $\{\alpha_{0},\beta_{0},\alpha_{1},\beta_{1}\}$, then for any $t\geq0$, there exists $\sigma_0(t)> 0$ such that for any $\sigma\geq\sigma_0(t)$, the plant of \eqref{control1} verifies the following estimates:
\begin{equation}
\label{bornesMT1}
e^{-(\tilde{\mu}_0+\mu_1) t}\sqrt{\frac{\beta_{0}(\sigma)}{\alpha_{1}(\sigma)}}\leq \|\Phi_{A}(t+\sigma,t)\|\leq     e^{(\mu_0+\tilde{\mu}_1) t}\sqrt{\frac{\beta_{1}(\sigma)}{\alpha_{0}(\sigma)}}.
\end{equation}
\end{lemma}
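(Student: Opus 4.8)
The plan is to read off the bound on $\|\Phi_{A}(t+\sigma,t)\|$ directly from the gramian identity \eqref{GP}, which couples $W$ and $K$ through a single conjugation by the transition matrix. Fix $t\geq 0$; the NUCC hypothesis furnishes a $\sigma_0(t)>0$ for which both \eqref{alpha0alpha1} and \eqref{K} hold whenever $\sigma\geq\sigma_0(t)$. Abbreviating $\Phi:=\Phi_{A}(t+\sigma,t)$, the first line of \eqref{GP} reads $K(t,t+\sigma)=\Phi\,W(t,t+\sigma)\,\Phi^{T}$, and the whole argument rests on sandwiching this expression between the two-sided gramian bounds.

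For the upper estimate I would combine the \emph{lower} bound on $W$ from \eqref{alpha0alpha1} with the \emph{upper} bound on $K$ from \eqref{K}. Conjugation by $\Phi$ preserves the semi-definite order, since $\langle \Phi M\Phi^{T}x,x\rangle=\langle M\Phi^{T}x,\Phi^{T}x\rangle$, so $W\geq e^{-2\mu_0 t}\alpha_0(\sigma)I$ yields $\Phi W\Phi^{T}\geq e^{-2\mu_0 t}\alpha_0(\sigma)\,\Phi\Phi^{T}$. Together with $K=\Phi W\Phi^{T}\leq e^{2\tilde{\mu}_1 t}\beta_1(\sigma)I$ this gives $\Phi\Phi^{T}\leq e^{2(\mu_0+\tilde{\mu}_1)t}\frac{\beta_1(\sigma)}{\alpha_0(\sigma)}I$. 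Taking the maximal eigenvalue and recalling $\|\Phi\|^{2}=\lambda_{\max}(\Phi^{T}\Phi)=\lambda_{\max}(\Phi\Phi^{T})$ produces the right-hand inequality of \eqref{bornesMT1}.

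For the lower estimate I would run the symmetric argument with the roles of the bounds interchanged: from $W\leq e^{2\mu_1 t}\alpha_1(\sigma)I$ one gets $K=\Phi W\Phi^{T}\leq e^{2\mu_1 t}\alpha_1(\sigma)\Phi\Phi^{T}$, and combining this with $K\geq e^{-2\tilde{\mu}_0 t}\beta_0(\sigma)I$ gives $\Phi\Phi^{T}\geq e^{-2(\tilde{\mu}_0+\mu_1)t}\frac{\beta_0(\sigma)}{\alpha_1(\sigma)}I$. Passing to the minimal eigenvalue and using $\lambda_{\min}(\Phi\Phi^{T})\leq\lambda_{\max}(\Phi\Phi^{T})=\|\Phi\|^{2}$, in the spirit of \eqref{TRVP}, delivers the left-hand inequality.

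As for difficulties, there is no deep obstacle here, the content being essentially a careful bookkeeping of the four nonuniform exponential factors. The two points that need genuine care are, first, that conjugation by $\Phi$ respects the order $\leq$ on symmetric matrices, which is used in both directions, and second, the passage from a matrix inequality $\Phi\Phi^{T}\leq cI$ (respectively $\geq cI$) to the scalar bound on $\|\Phi\|$, which relies on $\Phi\Phi^{T}$ and $\Phi^{T}\Phi$ sharing their spectrum — legitimate since $\Phi$ is an invertible transition matrix. One should also record that the exponents combine as $(\mu_0+\tilde{\mu}_1)$ and $(\tilde{\mu}_0+\mu_1)$ precisely because the upper bound pairs the $W$-lower with the $K$-upper constants, whereas the lower bound pairs the $W$-upper with the $K$-lower constants.
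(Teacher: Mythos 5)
Your proof is correct and is essentially the paper's own argument rephrased in Loewner-order language: the paper's change of variable $x=\Phi_{A}^{T}(t+\sigma,t)\eta$ inside the quadratic forms is exactly your conjugation $K=\Phi W\Phi^{T}$, and the paper's supremum over $\eta\neq 0$ of $|\Phi_{A}^{T}(t+\sigma,t)\eta|/|\eta|$ is exactly your passage to $\lambda_{\max}(\Phi\Phi^{T})=\|\Phi\|^{2}$. Even the pairing of bounds (the $W$-lower bound with the $K$-upper bound for the right-hand inequality, and the $W$-upper bound with the $K$-lower bound for the left-hand one) coincides with the paper's.
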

\begin{proof}
For any $x\neq0$, we can see that (\ref{alpha0alpha1}) is equivalent to
$$
0<e^{-2\mu_0 t}\alpha_0(\sigma)\|x\|^{2}\leq x^{T}W(t,t+\sigma)x\leq e^{2\mu_1 t}\alpha_1(\sigma)\|x\|^{2}.
$$

By considering the change of variable $x=\Phi_{A}^{T}(t+\sigma,t)\eta$,
with $\eta\neq0$, the previous estimate is transformed into
$$
e^{-2\mu_0 t}\alpha_0(\sigma)|\Phi_{A}^{T}(t+\sigma,t)\eta|^{2}\leq \eta^{T}K(t,t+\sigma)\eta\leq e^{2\mu_1 t}\alpha_1(\sigma)|\Phi_{A}^{T}(t+\sigma,t)\eta|^{2}. 
$$

On the other hand, as (\ref{K}) is equivalent to 
\begin{displaymath}
    0<e^{-2\tilde{\mu}_0 t}\beta_0(\sigma) |\eta|^{2}\leq \eta^{T}K(t,t+\sigma)\eta\leq e^{2\tilde{\mu}_1 t}\beta_1(\sigma) |\eta|^{2},
\end{displaymath}
the above two inequalities leads to the following estimates:
$$
e^{-2\tilde{\mu}_0 t}\beta_{0}(\sigma)|\eta|^{2}\leq e^{2\mu_1 t}\alpha_{1}(\sigma)|\Phi_A^{T}(t+\sigma,t)\eta|^{2},
$$
and
$$
e^{-2\mu_0 t}\alpha_{0}(\sigma)|\Phi_{A}^{T}(t+\sigma,t)\eta|^{2} \leq e^{2\tilde{\mu}_1 t}\beta_{1}(\sigma)|\eta|^{2},
$$
which implies that
$$
e^{-2(\tilde{\mu}_0+\mu_1) t}\frac{\beta_{0}(\sigma)}{\alpha_{1}(\sigma)} \leq \left( \frac{|\Phi_{A}^{T}(t+\sigma,t)\eta|}{|\eta|}\right)^{2}  \leq e^{2(\mu_0+\tilde{\mu}_1) t}\frac{\beta_{1}(\sigma)}{\alpha_{0}(\sigma)}
$$
and the result follows by considering the supreme over $\eta\neq0$ and recalling that $\|\Phi_{A}\|=\|\Phi_{A}^{T}\|$.
\end{proof}

A direct consequence from inequality (\ref{bornesMT1}) is the estimation:
\begin{equation}
\label{bornesMT2}
e^{-(\mu_0+\tilde{\mu}_1) t}\sqrt{\frac{\alpha_{0}(\sigma)}{\beta_{1}(\sigma)}}\leq \|\Phi_{A}(t+\sigma,t)\|^{-1}
% =||\Phi_{A}^{-1}(t,t+\sigma)||^{-1}
\leq e^{(\tilde{\mu}_0+\mu_1) t}\sqrt{\frac{\alpha_{1}(\sigma)}{\beta_{0}(\sigma)}},
\end{equation}
which will be useful to prove the next result:
\begin{lemma}
\label{cotas2MT}
If the control system \eqref{control1} is nonuniformly completely controllable
with nonuniformities $\{\mu_0, \tilde{\mu}_0, \mu_1, \tilde{\mu}_1\}$ and positive functions $\{\alpha_{0},\beta_{0},\alpha_{1},\beta_{1}\}$, then for any $t\geq0$, there exists $\sigma_0(t)>0$ such that for any $\sigma\geq\sigma_0(t)$, the plant of \eqref{control1} verifies the following estimates:
\begin{equation}
\label{inversophi}
    e^{-(\mu_0+\tilde{\mu}_1)t}\sqrt{\frac{\alpha_0(\sigma)}{\beta_1(\sigma)}}\leq\|\Phi_{A}(t,t+\sigma)\|\leq  e^{(\tilde{\mu}_0+\mu_{1})t}\sqrt{\frac{\alpha_1(\sigma)}{\beta_0(\sigma)}}.
\end{equation}
\end{lemma}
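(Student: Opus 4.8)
The plan is to re-run the argument of Lemma \ref{contasMT} in its dual form, interchanging the roles of the two gramians. The starting point is the second identity in \eqref{GP}, which for $a=t$ and $b=t+\sigma$ reads
\begin{displaymath}
W(t,t+\sigma)=\Phi_{A}(t,t+\sigma)K(t,t+\sigma)\Phi_{A}^{T}(t,t+\sigma).
\end{displaymath}
Fixing $t\geq 0$ and choosing $\sigma_{0}(t)$ so that both \eqref{alpha0alpha1} and \eqref{K} are in force, I would work on the interval $[t,t+\sigma]$ with $\sigma\geq\sigma_{0}(t)$.

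Next, for an arbitrary $x\neq 0$ I set $y=\Phi_{A}^{T}(t,t+\sigma)x$ and use the identity above to write $x^{T}W(t,t+\sigma)x=y^{T}K(t,t+\sigma)y$. Playing the lower bound of $W$ from \eqref{alpha0alpha1} against the upper bound of $K$ from \eqref{K}, and then the upper bound of $W$ against the lower bound of $K$, sandwiches the Rayleigh quotient of $\Phi_{A}^{T}(t,t+\sigma)$:
\begin{displaymath}
e^{-2(\mu_0+\tilde{\mu}_1)t}\frac{\alpha_{0}(\sigma)}{\beta_{1}(\sigma)}\leq\frac{|\Phi_{A}^{T}(t,t+\sigma)x|^{2}}{|x|^{2}}\leq e^{2(\tilde{\mu}_0+\mu_1)t}\frac{\alpha_{1}(\sigma)}{\beta_{0}(\sigma)}.
\end{displaymath}
Taking the supremum over $x\neq 0$ turns the right-hand estimate into an upper bound for $\|\Phi_{A}^{T}(t,t+\sigma)\|^{2}$, while the left-hand estimate, being valid for every $x$, forces the same supremum to be at least the left constant; after a square root and the identity $\|\Phi_{A}\|=\|\Phi_{A}^{T}\|$ this reproduces \eqref{inversophi}.

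The point to watch, and what I would flag as the genuine pitfall, is that \eqref{inversophi} is \emph{not} obtainable by merely inverting the bounds of Lemma \ref{contasMT}. Since $\Phi_{A}(t,t+\sigma)=\Phi_{A}(t+\sigma,t)^{-1}$, the relevant quantity is $\|\Phi_{A}(t+\sigma,t)^{-1}\|$, which equals the reciprocal of the \emph{smallest} singular value of $\Phi_{A}(t+\sigma,t)$, not $\|\Phi_{A}(t+\sigma,t)\|^{-1}$; these differ by the condition number. This is precisely why one must return to the gramians and invoke the dual identity rather than recycle \eqref{bornesMT2}. I expect the lower bound to be the only place admitting a shortcut: from $\|M\|\,\|M^{-1}\|\geq 1$ one gets $\|\Phi_{A}(t,t+\sigma)\|\geq\|\Phi_{A}(t+\sigma,t)\|^{-1}$, and combining this with the lower estimate in \eqref{bornesMT2} already yields the left inequality of \eqref{inversophi}. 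The upper bound, by contrast, genuinely needs the gramian computation above.
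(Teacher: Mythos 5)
Your proposal is correct and follows essentially the same route as the paper: your upper bound is exactly the paper's argument (pair the lower bound of $K$ against the upper bound of $W$ after the change of variables $y=\Phi_{A}^{T}(t,t+\sigma)x$ coming from \eqref{GP}, then take the supremum and use $\|\Phi_{A}\|=\|\Phi_{A}^{T}\|$). For the lower bound the paper uses precisely the shortcut you flag at the end — $\|\Phi_{A}(t,t+\sigma)\|\geq\|\Phi_{A}(t+\sigma,t)\|^{-1}$ combined with \eqref{bornesMT2} — while your primary derivation (lower bound of $W$ against upper bound of $K$) is an equally valid dual pairing, and your warning that \eqref{inversophi} cannot be obtained by naively inverting Lemma \ref{contasMT} is exactly the point reflected in the paper's two-step structure.
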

\begin{proof}
Since $\|I\|=1$, we have that \begin{displaymath}
1=\|\Phi_{A}(t,t+\sigma)\Phi_{A}(t+\sigma,t)\|  \leq \|\Phi_{A}(t,t+\sigma)\|\, \|\Phi_{A}(t+\sigma,t)\|,
\end{displaymath}
and, by using (\ref{bornesMT2}) we conclude that
\begin{equation*}
% \label{ineg-inv}
 e^{-(\mu_0+\tilde{\mu}_1) t}\sqrt{\frac{\alpha_0(\sigma)}{\beta_1(\sigma)}}\leq \|\Phi_{A}(t+\sigma,t)\|^{-1} \leq \|\Phi_{A}(t,t+\sigma)\|.
\end{equation*}
In addition, the inequality  (\ref{beta0beta1}) is equivalent to
\begin{displaymath}
e^{-2\tilde{\mu}_0 t}\beta_0(\sigma)|x|^{2} \leq x^{T}\Phi_{A}(t+\sigma,t)W(t,t+\sigma)\Phi_{A}^{T}(t+\sigma,t)x\leq e^{2\tilde{\mu}_1 t}\beta_1(\sigma) |x|^{2}
\end{displaymath}
for any $x\neq 0$. Therefore, by choosing $\eta=\Phi_{A}^{T}(t+\sigma,t)x$, this estimation becomes:
\begin{displaymath}
 \begin{array}{l}
e^{-2\tilde{\mu}_0 t}\beta_0(\sigma)|\Phi_{A}^{T}(t,t+\sigma)\eta|^{2} \leq \eta^{T}W(t,t+\sigma)\eta\leq e^{2\tilde{\mu}_1 t}\beta_{1}(\sigma)|\Phi_{A}^{T}(t,t+\sigma)\eta|^{2}.
% e^{-2\tilde{\mu}_0 t}\beta_0(\sigma)||\eta^{T}\Phi(t,t+\sigma)||^{2} \leq \eta^{T}W(t,t+\sigma)[\Phi(t,t+\sigma)\Phi(t+\sigma,t)]^{T}\eta\leq e^{2\tilde{\mu}_1 t}\beta_{1}(\sigma)||\eta^{T}\Phi(t,t+\sigma)||^{2} \\\\
% e^{-2\tilde{\mu}_0 t}\beta_0(\sigma)||\eta^{T}\Phi(t,t+\sigma)||^{2} \leq \eta^{T}W(t,t+\sigma)\eta\leq e^{2\tilde{\mu}_1 t}\beta_{1}(\sigma)||\eta^{T}\Phi(t,t+\sigma)||^{2}.
\end{array}
\end{displaymath}

By combining the left side of the previous inequality with the right side of (\ref{alpha0alpha1}), we obtain that
\begin{displaymath}
    e^{-2\tilde{\mu}_0 t}\beta_0(\sigma)|\Phi_{A}^{T}(t,t+\sigma)\eta|^{2} \leq \eta^{T}W(t,t+\sigma)\eta\leq e^{2\mu_1 t}\alpha_{1}(\sigma)|\eta|^{2},
\end{displaymath}
which implies that 
\begin{displaymath}
\left(\frac{|\Phi_{A}^{T}(t,t+\sigma)\eta|}{|\eta|}\right)^{2}\leq e^{2(\tilde{\mu}_0+\mu_1) t}\frac{\alpha_{1}(\sigma)}{\beta_{0}(\sigma)}.    
\end{displaymath}
Finally, by taking the supreme over any $\eta\neq 0$, we deduce that 
$$
\|\Phi_{A}(t,t+\sigma)\|\leq e^{(\tilde{\mu}_0+\mu_1) t}\sqrt{ \frac{\alpha_{1}(\sigma)}{\beta_{0}(\sigma)}},
$$
and the proof follows.
\end{proof}

\begin{corollary}
% \label{Cor-L2}
For any $\eta \neq 0$ it follows that for all $\sigma\geq\sigma_0(t)$
\begin{equation*}
% \label{Cor-L2-a+0}
e^{-(\mu_{0}+\tilde{\mu}_{1})t}\sqrt{\frac{\alpha_{0}(\sigma)}{\beta_{1}(\sigma)}}|\eta| \leq |\Phi_{A}^{T}(t,t+\sigma)\eta|,
\end{equation*}
and
\begin{equation}
\label{Cor-L2-a}
e^{-(\tilde{\mu}_{0}+\mu_{1})t}\sqrt{\frac{\beta_{0}(\sigma)}{\alpha_{1}(\sigma)}}|\eta| \leq |\Phi_{A}^{T}(t+\sigma,t)\eta|.
\end{equation}
\end{corollary}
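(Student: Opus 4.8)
The plan is to notice that both inequalities are nothing more than the per-vector versions of estimates already obtained---before passing to the operator norm---inside the proofs of Lemmas \ref{contasMT} and \ref{cotas2MT}. The point worth stressing is that those intermediate chains of inequalities hold for \emph{every} fixed $\eta\neq 0$, so they encode a lower bound on $|\Phi_A^T\eta|/|\eta|$, equivalently a lower bound on the smallest singular value of the two transition matrices. This is genuinely stronger than the operator-norm estimates recorded in the lemmas, which is precisely why the corollary is stated separately.

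For the inequality \eqref{Cor-L2-a}, I would simply revisit the chain established midway through the proof of Lemma \ref{contasMT}, which reads
\[e^{-2(\tilde{\mu}_0+\mu_1) t}\frac{\beta_{0}(\sigma)}{\alpha_{1}(\sigma)} \leq \left( \frac{|\Phi_{A}^{T}(t+\sigma,t)\eta|}{|\eta|}\right)^{2} \leq e^{2(\mu_0+\tilde{\mu}_1) t}\frac{\beta_{1}(\sigma)}{\alpha_{0}(\sigma)}\]
for any $\eta\neq 0$. Taking the square root of the left-hand bound and multiplying through by $|\eta|$ yields \eqref{Cor-L2-a} at once, \emph{without} taking any supremum over $\eta$.

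For the first inequality, I would return to the estimate produced in the proof of Lemma \ref{cotas2MT} after the substitution $\eta=\Phi_{A}^{T}(t+\sigma,t)x$, namely
\[e^{-2\tilde{\mu}_0 t}\beta_0(\sigma)|\Phi_{A}^{T}(t,t+\sigma)\eta|^{2} \leq \eta^{T}W(t,t+\sigma)\eta\leq e^{2\tilde{\mu}_1 t}\beta_{1}(\sigma)|\Phi_{A}^{T}(t,t+\sigma)\eta|^{2}.\]
Combining the rightmost term with the lower bound $e^{-2\mu_0 t}\alpha_0(\sigma)|\eta|^{2}\leq \eta^{T}W(t,t+\sigma)\eta$ coming from \eqref{alpha0alpha1} gives
\[e^{-2\mu_0 t}\alpha_0(\sigma)|\eta|^{2}\leq e^{2\tilde{\mu}_1 t}\beta_{1}(\sigma)|\Phi_{A}^{T}(t,t+\sigma)\eta|^{2},\]
and rearranging followed by a square root delivers the first asserted inequality.

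Since every ingredient is already available from the two preceding lemmas, there is no substantive obstacle here; the only care required is bookkeeping---keeping the estimates at the level of an individual vector $\eta$ rather than passing to $\|\cdot\|$ via a supremum---so that the resulting bounds are valid uniformly in $\eta$ and therefore genuinely control the minimal singular value of $\Phi_A^T(t,t+\sigma)$ and $\Phi_A^T(t+\sigma,t)$.
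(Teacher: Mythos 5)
Your proof is correct, but it takes a genuinely different route from the paper's. The paper proves \eqref{Cor-L2-a} by treating Lemma \ref{cotas2MT} as a black box: it starts from the identity $|\eta|=|\Phi_{A}^{T}(t,t+\sigma)\Phi_{A}^{T}(t+\sigma,t)\eta|\leq \|\Phi_{A}(t,t+\sigma)\|\,|\Phi_{A}^{T}(t+\sigma,t)\eta|$, deduces $\|\Phi_{A}(t,t+\sigma)\|^{-1}|\eta|\leq|\Phi_{A}^{T}(t+\sigma,t)\eta|$, and then invokes the operator-norm upper bound in \eqref{inversophi}; the first inequality is obtained symmetrically. In effect, the paper converts the norm \emph{upper} bounds of the lemmas into per-vector \emph{lower} bounds via the standard fact that the smallest singular value of an invertible matrix is the reciprocal of the norm of its inverse. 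You instead re-open the proofs of Lemmas \ref{contasMT} and \ref{cotas2MT} and keep everything at the level of a fixed vector $\eta$: the left half of the displayed chain in the proof of Lemma \ref{contasMT} yields \eqref{Cor-L2-a} after a square root, and for the first inequality you pair the right half of the substituted chain in Lemma \ref{cotas2MT} with the lower bound in \eqref{alpha0alpha1} (note the paper's lemma pairs the complementary sides of these chains, so this particular combination does not literally appear there, but it is equally valid). Both arguments ultimately rest on the same gramian inequalities \eqref{alpha0alpha1} and \eqref{K}. Your version buys directness --- no norm inversion, and it makes explicit that the corollary controls the smallest singular values of $\Phi_{A}^{T}(t,t+\sigma)$ and $\Phi_{A}^{T}(t+\sigma,t)$, which is indeed stronger information than the norm bounds alone --- while the paper's version buys modularity, since it depends only on the statements of the two lemmas rather than on the internals of their proofs.
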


\begin{proof}
We will prove the second inequality since the first one can be proved in a similar way. Now, by using the identity $|\eta|=|\Phi_{A}^{T}(t,t+\sigma)\Phi_{A}^{T}(t+\sigma,t)\eta|$, we can deduce that
\begin{displaymath}
\begin{array}{rcl}
|\eta|&\leq& \|\Phi_{A}^{T}(t,t+\sigma)\|\,|\Phi_{A}^{T}(t+\sigma,t)\eta| \\\\
&\leq& \|\Phi_{A}(t,t+\sigma)\|\,|\Phi_{A}^{T}(t+\sigma,t)\eta|,
\end{array}
\end{displaymath}
and also that
\begin{displaymath}
\|\Phi_{A}(t,t+\sigma)\|^{-1}|\eta|\leq |\Phi_{A}^{T}(t+\sigma,t)\eta|, 
\end{displaymath}
and (\ref{Cor-L2-a}) is a direct consequence from (\ref{inversophi}). 
\end{proof}

As we have seen, the preceding results deduce a set of estimates from the Gramian inequalities (\ref{alpha0alpha1})--(\ref{K}) and its nonuniformities. These results will allow us to prove that the inequalities imply a condition that generalizes the Kalman property (\ref{BG}).

\begin{lemma}
\label{NUIMBU}
If the control system \eqref{control1} is nonuniformly completely controllable, then there exist $\nu>0$ and a function  $\alpha(\cdot)\in\mathcal{B}$ satisfying
\begin{equation}
\label{crec-acot}
\|\Phi_{A}(t,\tau)\|\leq e^{\nu\tau}\,\alpha(|t-\tau|)  \quad \textnormal{for any $t,\tau\geq0$}.
\end{equation}
\end{lemma}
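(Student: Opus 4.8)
The plan is to establish the nonuniform Kalman bound \eqref{crec-acot} by combining the forward estimate \eqref{bornesMT1} of Lemma \ref{contasMT} with the backward estimate \eqref{inversophi} of Lemma \ref{cotas2MT}, taking $\nu := \mu_0+\mu_1+\tilde{\mu}_0+\tilde{\mu}_1$. Since both estimates are available only for elapsed times $\sigma\geq\sigma_0(t)$, I would split the analysis according to the sign of $t-\tau$ and, within each sign, according to whether the elapsed time $|t-\tau|$ exceeds the threshold attached to the smaller of the two times.

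I would treat first the regime of large elapsed time. If $t\geq\tau$ and $t-\tau\geq\sigma_0(\tau)$, then writing $\sigma=t-\tau$ and applying the upper bound in \eqref{bornesMT1} gives directly $\|\Phi_{A}(t,\tau)\|\leq e^{(\mu_0+\tilde{\mu}_1)\tau}\sqrt{\beta_1(\sigma)/\alpha_0(\sigma)}$, which is of the desired form since $e^{(\mu_0+\tilde{\mu}_1)\tau}\leq e^{\nu\tau}$. Symmetrically, if $t\leq\tau$ and $\tau-t\geq\sigma_0(t)$, the upper bound in \eqref{inversophi} yields $\|\Phi_{A}(t,\tau)\|\leq e^{(\tilde{\mu}_0+\mu_1)t}\sqrt{\alpha_1(\sigma)/\beta_0(\sigma)}$ with $\sigma=\tau-t$, and here I would use $t\leq\tau$ to replace $e^{(\tilde{\mu}_0+\mu_1)t}$ by $e^{\nu\tau}$. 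In both subcases the $\sigma$-dependent factor depends only on the elapsed time, which is exactly what is needed to feed into $\alpha(\cdot)$.

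The delicate regime is that of short elapsed time, namely $|t-\tau|<\sigma_0(\min\{t,\tau\})$, where neither lemma applies directly. Here I would use the evolution property $\Phi_{A}(t,\tau)=\Phi_{A}(t,r)\Phi_{A}(r,\tau)$ to insert an auxiliary long leg: for $t\geq\tau$, choose $\sigma$ larger than both $\sigma_0(\tau)$ and $(t-\tau)+\sigma_0(t)$, and decompose $\Phi_{A}(t,\tau)=\Phi_{A}(t,\tau+\sigma)\Phi_{A}(\tau+\sigma,\tau)$, so that the first factor is a backward evolution of length $\sigma-(t-\tau)\geq\sigma_0(t)$ controlled by \eqref{inversophi} and the second a forward evolution of length $\sigma\geq\sigma_0(\tau)$ controlled by \eqref{bornesMT1}; the case $t\leq\tau$ is analogous. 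Submultiplicativity of the norm then produces a product of two controlled factors, and the nonuniformities combine into $e^{\nu\tau}$ after using $\max\{t,\tau\}=\min\{t,\tau\}+|t-\tau|$, the leftover $e^{(\cdot)|t-\tau|}$ being harmless for bounded elapsed times.

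The main obstacle is precisely this short-time regime: because the threshold $\sigma_0(\cdot)$ depends on the base time, the auxiliary length $\sigma$ must be taken $\tau$-dependent, so the resulting factors $\alpha_1(\sigma),\beta_1(\sigma),\dots$ are evaluated at arguments growing with $\tau$ rather than depending only on $|t-\tau|$. Controlling these factors is the crux: I would either invoke local boundedness of the transition matrix, guaranteed since $A(\cdot)$ is bounded on finite intervals so that $\|\Phi_{A}(t,\tau)\|\leq\exp\left(\int_{\min\{t,\tau\}}^{\max\{t,\tau\}}\|A(r)\|\,dr\right)$ on the short interval, to bound the short leg independently, or exploit continuity and positivity of $\alpha_0,\alpha_1,\beta_0,\beta_1$ to keep the $\sigma$-factors locally bounded. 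Finally, I would set $\alpha(s):=\max\left\{\sqrt{\beta_1(s)/\alpha_0(s)},\sqrt{\alpha_1(s)/\beta_0(s)},\Lambda(s)\right\}$, where $\Lambda$ collects the short-time contribution, and verify $\alpha\in\mathcal{B}$ by checking boundedness on each bounded interval; with $\nu=\mu_0+\mu_1+\tilde{\mu}_0+\tilde{\mu}_1$ this yields \eqref{crec-acot}.
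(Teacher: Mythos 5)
Your proposal follows essentially the same route as the paper's proof: the same case split between large and short elapsed times, the same use of the upper bounds in \eqref{bornesMT1} and \eqref{inversophi} for the long-time regime, and, for the short-time regime, the same trick of inserting an auxiliary long leg via the cocycle property and multiplying the two controlled factors (your pivot $\tau+\sigma$ versus the paper's $t+\sigma$ is immaterial), with the nonuniformities gathered into $e^{\nu\tau}$ and even the same exponent $\nu=\mu_0+\mu_1+\tilde{\mu}_0+\tilde{\mu}_1$.

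Where you differ is that you explicitly flag the crux of the short-time regime: the auxiliary length $\sigma$ must dominate thresholds $\sigma_0(\cdot)$ depending on $t$ and $\tau$, so the factors $\alpha_1(\sigma),\beta_0(\sigma),\ldots$ are not functions of $|t-\tau|$ alone. This difficulty is genuine, and it is worth noting that the paper's own proof does not resolve it either: its function $\alpha_3(|t-\tau|)$ still contains $\sigma\geq\max\{\sigma_0(t),\sigma_0(\tau)\}$, so it is a bona fide function of the elapsed time only when $\sigma_0(\cdot)$ can be chosen bounded (as happens, e.g., in the example of Subsection \ref{ex:NUCC}). However, neither of your proposed patches closes this gap. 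The Gronwall-type bound $\|\Phi_{A}(t,\tau)\|\leq\exp\bigl(\int_{\min\{t,\tau\}}^{\max\{t,\tau\}}\|A(r)\|\,dr\bigr)$ depends on where the short interval sits, not merely on its length: since $A(\cdot)$ is only assumed bounded on finite intervals, this integral can grow arbitrarily fast in $\tau$ even for $|t-\tau|\leq 1$, so your $\Lambda$ is not a well-defined function of the elapsed time and cannot be absorbed into $e^{\nu\tau}$ without extra hypotheses on $A$. Likewise, positivity or continuity of $\alpha_0,\alpha_1,\beta_0,\beta_1$ (continuity is not even assumed in the definition of NUCC) does not help, because the offending factors are evaluated at arguments $\sigma$ that grow unboundedly with $t$ and $\tau$, not at points of a fixed bounded set. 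In short, your argument reaches exactly the same state of rigor as the paper's — complete when $t\mapsto\sigma_0(t)$ is (or can be chosen) bounded, and with an unresolved step otherwise — and your identification of the obstruction is the accurate diagnosis, even though the repairs you sketch do not work as stated.
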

\begin{proof}
By considering that the system \eqref{control1} is nonuniformly completely controllable, given $t\geq0$, $\tau\geq0$, there exist $\sigma_{t}, \sigma_{\tau}>0$ such that for any $\sigma\geq\max\{\sigma_{t}, \sigma_{\tau}\}$ Lemma \ref{contasMT} allows us to ensure that 
\begin{equation}
\label{phi1}
\|\Phi_{A}(\tau+\sigma,\tau)\|\leq e^{(\tilde{\mu}_1+\mu_0)\tau}\sqrt{\frac{\beta_{1}(\sigma)}{\alpha_{0}(\sigma)}}
\end{equation}
and Lemma \ref{cotas2MT} leads to that 
\begin{equation}
\label{phi2}
\|\Phi_{A}(t,t+\sigma)\|\leq e^{(\mu_1+\tilde{\mu}_0)t}\sqrt{\frac{\alpha_{1}(\sigma)}{\beta_{0}(\sigma)}}.
\end{equation}

If we consider $|t-\tau|\geq\sigma_0:=\max\{\sigma_{t}, \sigma_{\tau}\}$, we have to keep in mind the following subcases:

\noindent Case A.1: $t>\tau$. We choose $\sigma=t-\tau\geq\sigma_0$, then we have that for $t=\tau+\sigma$, the inequality (\ref{phi1}) can be expressed as:
% $$
% \|\Phi_{A}(\tau+\sigma,\tau)\|\leq e^{(\tilde{\mu}_1+\mu_0)\tau}\sqrt{\frac{\beta_{1}(\sigma)}{\alpha_{0}(\sigma)}}.
% $$
% In addition, as $t=\sigma+\tau$, we obtain that
$$
\|\Phi_{A}(t,\tau)\|\leq e^{(\tilde{\mu}_1+\mu_0)\tau} \sqrt{\frac{\beta_{1}(t-\tau)}{\alpha_{0}(t-\tau)}}.
$$
% \bigskip 
\noindent Case A.2: $\tau>t$. We choose $\sigma=\tau-t\geq\sigma_0$. Again, by considering $\tau=t+\sigma$, the inequality (\ref{phi2}) is given by:
$$
\|\Phi_{A}(t,\tau)\|\leq e^{(\mu_1+\tilde{\mu}_0)t}    \sqrt{\frac{\alpha_{1}(\tau-t)}{\beta_{0}(\tau-t)}}\leq e^{(\mu_1+\tilde{\mu}_0)\tau}    \sqrt{\frac{\alpha_{1}(\tau-t)}{\beta_{0}(\tau-t)}}.
$$
% In addition, as $\tau=t+\sigma$, we obtain that
% $$
% \|\Phi_{A}(t,\tau)\|\leq e^{(\mu_1+\tilde{\mu}_0)\tau} \sqrt{\frac{\alpha_{1}(\tau-t)}{\beta_{0}(\tau-t)}}.
% $$

Gathering the above two cases, we take $\nu_1=\max\{\mu_0+\tilde{\mu}_1,\tilde{\mu}_0+\mu_1\}$ and deduce that
$$
\|\Phi_{A}(t,\tau)\| \leq e^{\nu_1\tau}\alpha_2(|t-\tau|)$$
where 
$$\alpha_2(|t-\tau|):=\max\left\{\sqrt{\frac{\beta_{1}(|t-\tau|)}{\alpha_{0}(|t-\tau|)}}, \sqrt{\frac{\alpha_{1}(|t-\tau|)}{\beta_{0}(|t-\tau|)}}\right\}.
$$

% \begin{comment}

On the other hand, we will consider the case when $|t-\tau|<\sigma_0:=\max\{\sigma_{t},\sigma_{\tau}\}$.
% In this case, without loss of generality, we can assume that $\sigma_{t}>\sigma_{\tau}$.

\noindent Case B.1: $\tau<t<\tau+\sigma_{0}\leq t+\sigma_0$. From Lemma \ref{cotas2MT} we have easily that for any $\sigma\geq\sigma_0$ we have that:
$$
\begin{array}{rcl}
\|\Phi_{A}(t,\tau)\| &\leq&  \displaystyle\|\Phi_{A}(t,t+\sigma)\| \,\|\Phi_{A}(t+\sigma,\tau)\|, \\\\
&\leq &
 \displaystyle e^{(\mu_1+\tilde{\mu}_0)t} \sqrt{\frac{\alpha_{1}(\sigma)}{\beta_{0}(\sigma)}}\|\Phi_{A}(t+\sigma,\tau)\|.
\end{array}
$$

Moreover, as $t+\sigma-\tau>\sigma\geq\sigma_{0}$, the estimation obtained for the case A.1, combined with $t<\tau+\sigma_0\leq\tau+\sigma$, allow us to deduce
that
$$
\begin{array}{rcl}
\|\Phi_{A}(t,\tau)\| &\leq&  \displaystyle e^{(\mu_1+\tilde{\mu}_0)t} \sqrt{\frac{\alpha_{1}(\sigma)}{\beta_{0}(\sigma)}} e^{(\tilde{\mu}_1+\mu_0)\tau}\sqrt{\frac{\beta_1(t+\sigma-\tau)}{\alpha_0(t+\sigma-\tau)}},\\\\
&\leq& \displaystyle e^{(\mu_1+\tilde{\mu}_0)(\tau+\sigma)}\sqrt{\frac{\alpha_{1}(\sigma)\beta_1(t+\sigma-\tau)}{\beta_{0}(\sigma)\alpha_0(t+\sigma-\tau)}} e^{(\tilde{\mu}_1+\mu_0)\tau}.

% &\leq& \displaystyle e^{(\mu_1+\tilde{\mu}_0+\tilde{\mu}_{1}+\mu_{0})\tau}\alpha_4(t-\tau)

%&\leq&\displaystyle e^{(\mu_1+\tilde{\mu}_0+\upsilon)\tau+(\mu_1+\tilde{\mu}_0)\sigma}\sqrt{\frac{\alpha_{1}(\sigma)}{\beta_{0}(\sigma)}}\tilde{\alpha}_{3}(t-\tau+\sigma).
\end{array}
$$
% where 
% $$
% \alpha_{4}(t-\tau)=\sqrt{\frac{\alpha_{1}(\sigma)\beta_1(t+\sigma-\tau)}{\beta_{0}(\sigma)\alpha_0(t+\sigma-\tau)}} e^{(\mu_1+\tilde{\mu}_0)\sigma}.
% $$

% \medskip

\noindent Case B.2: $t<\tau<t+\sigma_0<\tau+\sigma_0$. Similarly as the previous case, by using 
Lemma \ref{contasMT} and the estimation of the case A.2 arising from $\tau+\sigma-t>\sigma\geq\sigma_{0}$, then for any $\sigma\geq\sigma_0$
we obtain that:
$$
\begin{array}{rcl}
\|\Phi_{A}(t,\tau)\| &\leq&  \displaystyle\|\Phi_{A}(t,\tau+\sigma)\| \,\|\Phi_{A}(\tau+\sigma,\tau)\|, \\\\
&\leq& \displaystyle\|\Phi_{A}(t,\tau+\sigma)\| e^{(\mu_0+\tilde{\mu}_1)\tau}\sqrt{\frac{\beta_{1}(\sigma)}{\alpha_{0}(\sigma)}},\\\\
&\leq&\displaystyle e^{(\mu_1+\tilde{\mu}_0)t}\sqrt{\frac{\alpha_1(\tau+\sigma-t)}{\beta_0(\tau+\sigma-t)}}e^{(\mu_0+\tilde{\mu}_1)\tau}\sqrt{\frac{\beta_{1}(\sigma)}{\alpha_{0}(\sigma)}},\\\\
&\leq&\displaystyle e^{(\mu_1+\tilde{\mu}_0)(\tau+\sigma)}\sqrt{\frac{\alpha_1(\tau+\sigma-t)\beta_1(\sigma)}{\beta_0(\tau+\sigma-t)\alpha_0(\sigma)}}e^{(\tilde{\mu}_1+\mu_0)\tau}.
% &\leq&\displaystyle e^{(\mu_1+\tilde{\mu}_0+\mu_0+\tilde{\mu}_1)\tau}\tilde{\alpha}_4(\tau-t)
\end{array}
$$
% where
% $$
% \tilde{\alpha}_{4}(\tau-t)=\sqrt{\frac{\alpha_{1}(\tau+\sigma-t)\beta_1(\sigma)}{\beta_{0}(\tau+\sigma-t)\alpha_0(\sigma)}}e^{(\mu_1+\tilde{\mu}_0)\sigma}.
% $$

Based on the last two cases, by choosing $\nu_2=\mu_1+\tilde{\mu}_0+\mu_0+\tilde{\mu}_1$, we have that
$$\|\Phi_{A}(t,\tau)\|\leq e^{\nu_2\tau}\alpha_3(|t-\tau|),$$
where
$$\alpha_3(|t-\tau|)=\displaystyle e^{(\mu_1+\tilde{\mu}_0)\sigma}\max\left \{\sqrt{\frac{\alpha_{1}(\sigma)\beta_1(|t-\tau|+\sigma)}{\beta_{0}(\sigma)\alpha_0(|t-\tau|+\sigma)}} , \sqrt{\frac{\alpha_{1}(|\tau-t|+\sigma)\beta_1(\sigma)}{\beta_{0}(|\tau-t|+\sigma)\alpha_0(\sigma)}}\right \}.$$

Finally, due to the fact that $\nu_1\leq\nu_2$, then 
$$\|\Phi_A(t,\tau)\|\leq e^{\nu \tau}\alpha(|t-\tau|),$$
where  
$$\nu=\nu_2\quad \textnormal{and}\quad \alpha(|t-\tau|)=\max\{\alpha_2(|t-\tau|), \alpha_3(|t-\tau|)\}.$$

\end{proof}

\begin{remark}
From now on, the property \eqref{crec-acot} will be called as the \textit{nonuniform Kalman's property}.
\end{remark}

\begin{remark}
\label{NUBKP}
We can see that the nonuniform bounded growth from Definition \ref{NUBGT} is an example of the nonuniform Kalman's condition \eqref{crec-acot}.
Nevertheless, the equivalences stated in Lemma \ref{EKC} for the uniform case cannot be directly adapted
to the nonuniform context. More specifically, the work with intervals developed in the implicance 
$iii) \Rightarrow i)$ cannot be generalized due to the nonuniformities. 
\end{remark}

The above lemmas will allow us to generalize the Kalman's Proposition \ref{Prop2} to the nonuniform framework, as follows:
\begin{theorem}
\label{p2l3}
Any two of the properties \eqref{alpha0alpha1}, \eqref{K} and \eqref{crec-acot}
imply the third one.
\end{theorem}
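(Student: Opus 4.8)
The plan is to establish the three implications separately, noting at the outset that the implication ``\eqref{alpha0alpha1} and \eqref{K} $\Rightarrow$ \eqref{crec-acot}'' is precisely the content of Lemma \ref{NUIMBU}, so nothing further is required there. The remaining two implications exploit the algebraic identities \eqref{GP} that tie the two gramians together through the transition matrix, combined with the upper bound \eqref{crec-acot}.

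To show that \eqref{alpha0alpha1} and \eqref{crec-acot} force \eqref{K}, I would begin from $K(t,t+\sigma)=\Phi_{A}(t+\sigma,t)W(t,t+\sigma)\Phi_{A}^{T}(t+\sigma,t)$. Fixing $x\neq 0$ and setting $y=\Phi_{A}^{T}(t+\sigma,t)x$, the quadratic form becomes $x^{T}K(t,t+\sigma)x=y^{T}W(t,t+\sigma)y$. The upper bound of \eqref{K} then follows by applying the upper bound of \eqref{alpha0alpha1} to $y$ and estimating $|y|\leq\|\Phi_{A}(t+\sigma,t)\|\,|x|\leq e^{\nu t}\alpha(\sigma)|x|$ via \eqref{crec-acot} with $\tau=t$; this yields $\tilde{\mu}_1=\mu_1+\nu$ and $\beta_1(\sigma)=\alpha_1(\sigma)\alpha(\sigma)^{2}$. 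For the lower bound I would instead produce a lower estimate on $|y|$, using the identity $|x|=|\Phi_{A}^{T}(t,t+\sigma)y|\leq\|\Phi_{A}(t,t+\sigma)\|\,|y|$ together with \eqref{crec-acot} (now with $\tau=t+\sigma$) to obtain $|y|\geq e^{-\nu(t+\sigma)}\alpha(\sigma)^{-1}|x|$; combining this with the lower bound of \eqref{alpha0alpha1} gives the lower bound of \eqref{K} with $\tilde{\mu}_0=\mu_0+\nu$ and $\beta_0(\sigma)=e^{-2\nu\sigma}\alpha_0(\sigma)\alpha(\sigma)^{-2}$, which remains a strictly positive function of $\sigma$.

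The implication ``\eqref{K} and \eqref{crec-acot} $\Rightarrow$ \eqref{alpha0alpha1}'' is entirely symmetric: I would start from $W(t,t+\sigma)=\Phi_{A}(t,t+\sigma)K(t,t+\sigma)\Phi_{A}^{T}(t,t+\sigma)$, set $y=\Phi_{A}^{T}(t,t+\sigma)x$, and repeat the two estimates, exchanging the roles of $W$ and $K$ and of the transition matrices $\Phi_{A}(t,t+\sigma)$ and $\Phi_{A}(t+\sigma,t)$. This delivers \eqref{alpha0alpha1} with $\mu_1=\tilde{\mu}_1+\nu$, $\mu_0=\tilde{\mu}_0+\nu$ and correspondingly rescaled functions $\alpha_0,\alpha_1$. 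In all cases the estimates of \eqref{alpha0alpha1} or \eqref{K} are invoked only for $\sigma\geq\sigma_0(t)$ while \eqref{crec-acot} holds for every $\sigma>0$, so the deduced inequalities are valid on the same range $\sigma\geq\sigma_0(t)$.

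The step I expect to be the main obstacle is the derivation of the lower gramian bounds. The upper bounds follow immediately from submultiplicativity of the norm, but the lower bounds require controlling $|\Phi_{A}^{T}(t+\sigma,t)x|$ (respectively $|\Phi_{A}^{T}(t,t+\sigma)x|$) from below, which is only possible through the reciprocal estimate obtained from the identity $\Phi_{A}^{T}(t,t+\sigma)\Phi_{A}^{T}(t+\sigma,t)=I$ and its analogue with the time arguments exchanged, applying \eqref{crec-acot} in the opposite time direction. The careful bookkeeping of the nonuniformity exponents---each lower bound adding $\nu$ to the corresponding gramian nonuniformity and introducing a $\sigma$-dependent factor $e^{\pm 2\nu\sigma}$ that must be absorbed into the admissible functions $\beta_i(\sigma)$ or $\alpha_i(\sigma)$---is the delicate, though not deep, part of the argument.
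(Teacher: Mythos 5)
Your proposal is correct and follows essentially the same route as the paper's proof: the first implication is delegated to Lemma \ref{NUIMBU}, and the other two are obtained from the identities \eqref{GP} via the change of variable $y=\Phi_{A}^{T}(t+\sigma,t)x$ (respectively $y=\Phi_{A}^{T}(t,t+\sigma)x$), bounding $|y|$ above by submultiplicativity and below through the reciprocal identity $\Phi_{A}^{T}(t,t+\sigma)\Phi_{A}^{T}(t+\sigma,t)=I$ with \eqref{crec-acot} applied in the opposite time direction. Even the resulting nonuniformities and functions (e.g. $\tilde{\mu}_{1}=\mu_{1}+\nu$, $\beta_{1}(\sigma)=\alpha_{1}(\sigma)\alpha^{2}(\sigma)$, $\tilde{\mu}_{0}=\mu_{0}+\nu$, $\beta_{0}(\sigma)=e^{-2\nu\sigma}\alpha_{0}(\sigma)/\alpha^{2}(\sigma)$) coincide with those in the paper.
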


\begin{proof}
Firstly, a direct consequence of Lemma \ref{NUIMBU} is that (\ref{alpha0alpha1}) and (\ref{K})
imply the property (\ref{crec-acot}).

Secondly, let us assume that the properties (\ref{alpha0alpha1}) and (\ref{crec-acot}) are satisfied. In other words, for any $t\geq0$, there exists $\sigma_0(t)>0$ such that for all $\sigma\geq\sigma_0(t)$, the inequality (\ref{alpha0alpha1}) is satisfied.
By recalling that
$$
\|\Phi_{A}(t,t+\sigma)\|^{-1}\leq\|\Phi_{A}(t+\sigma,t)\|
$$
and by using (\ref{crec-acot}), we obtain that
\begin{equation}
\label{Phiarribaabajo}
\displaystyle \frac{1}{e^{\nu t}}\frac{1}{e^{\nu \sigma}\alpha(\sigma)}\leq\|\Phi_{A}(t+\sigma,t)\|\leq e^{\nu t}\alpha(\sigma).
\end{equation}

By considering $x=\Phi_{A}^{T}(t+\sigma,t)\eta$ with $\eta\neq 0$, the inequalities (\ref{alpha0alpha1}) combined with the identity (\ref{GP}) imply
\begin{displaymath}
 e^{-2\mu_{0}t}\alpha_{0}(\sigma)|\Phi_{A}^{T}(t+\sigma,t)\eta|^{2}\leq \eta^{T}K(t,t+\sigma)\eta \leq e^{2\mu_{1}t}\alpha_{1}(\sigma)|\Phi_{A}^{T}(t+\sigma,t)\eta|^{2}.
\end{displaymath}
    
Now, by considering the inequalities seen in Corollary 1:
\begin{displaymath}
\|\Phi_{A}^{T}(t,t+\sigma)\|^{-1}|\eta|\leq |\Phi_{A}^{T}(t+\sigma,t)\eta|\leq \|\Phi_{A}^{T}(t+\sigma,t)\| |\eta|,    
\end{displaymath}
combined with (\ref{Phiarribaabajo}) and taking supreme over $\eta\neq 0$, we obtain that
$$e^{-2\nu t}\frac{1}{e^{2\nu\sigma}\alpha^{2}(\sigma)}e^{-2\mu_0 t}\alpha_0(\sigma)I\leq K(t,t+\sigma)\leq e^{2\nu t}\alpha^{2}(\sigma)e^{2\mu_1 t}\alpha_1(\sigma)I$$
and the condition (\ref{K}) is satisfied for all $\sigma\geq\sigma_0(t)$.

Finally, and following the same line of proof in the previous case, let us assume that the conditions (\ref{K}) and (\ref{crec-acot}) are satisfied. We use (\ref{crec-acot}) in order to obtain a similar expression to (\ref{Phiarribaabajo}):
    $$\frac{1}{e^{\nu t}}\frac{1}{\alpha(\sigma)}\leq\|\Phi_{A}(t,t+\sigma)\|\leq e^{\nu t}e^{\nu \sigma}\alpha(\sigma)$$
    and by combining it with (\ref{K}) we have that:
    $$e^{-2\nu t}\frac{1}{\alpha^{2}(\sigma)}e^{-2\tilde{\mu}_0 t}\beta_0(\sigma)<W(t,t+\sigma)<e^{2\nu t}e^{2\nu\sigma}\alpha^{2}(\sigma)e^{2\tilde{\mu}_1 t}\beta_1(\sigma)$$
    then the estimate (\ref{alpha0alpha1}) follows.
\end{proof}

A noticeable consequence of Theorem \ref{p2l3} is that any control system whose plant has the nonuniform Kalman's property (\ref{crec-acot}) cannot be UCC when $\nu>0$, but could be NUCC provided that (\ref{alpha0alpha1}) and/or (\ref{K}) are satisfied. In this case, it is desirable to know examples of systems having the nonuniform Kalman's property and the nonuniform bounded growth described by (\ref{Phibound})
provides a distinguished one, as it was stated in Remark \ref{NUBKP}. Note that we have arrived to a point where the controllability analysis of the 1960's unexpectedly meets the qualitative theory of linear nonautonomous systems.

The new concept of NUCC is more restrictive than the classical UCC. In the subsection \ref{ex:NUCC} it is presented an example of a NUCC system, while in the subsection \ref{ex:CC-notNUCC} it is showed that this new concept is effectively an intermediate stage between UCC and CC. We remark that the example exhibited in the subsection \ref{ex:CC-notNUCC} was introduced by Kalman himself in his seminal work \cite[p.157]{Kalman} as a CC system that does not fulfill the UCC conditions.

\subsection{An example of a nonuniformly completely controllable system}\label{ex:NUCC} If we assume that the linear system (\ref{lin}) admits the property of nonuniform bounded growth with parameters $(K_0,a,\eta)$, it is interesting to explore some conditions on $B(t)$ leading to the estimate (\ref{alpha0alpha1}) and, as a consequence of the Theorem \ref{p2l3}, leading to a NUCC linear control system.

In fact, we will assume that for any $t\geq0$: 
    $$b_0e^{-\beta_0 t}I\leq B(t)B^{T}(t)\leq b_1e^{\beta_1 t}I.$$
    % \todo{norma de $A$ es igual a $A^{T}$}
    Note that the previous condition includes the particular, and ubiquitous, case of constant $B$, with $BB^T>0$. 
    Based on the transition matrix property, we have that for any vector $x\neq0$, it follows that  
$
x=\Phi_{A}^{T}(s,t)\Phi_{A}^{T}(t,s)x
$
and then:    
    $$|x|^{2}\leq\left\|\Phi_{A}^{T}(s,t)\right\|^{2}|\Phi_{A}^{T}(t,s)x|^{2}\Rightarrow \frac{|x|^{2}}{\left\|\Phi_{A}(s,t)\right\|^{2}}\leq|\Phi_{A}(t,s)x|^{2}, $$
    and by considering the nonuniform bounded growth hypothesis, we can ensure that
    \begin{equation*}
    % \label{Phicotaabajo}
    \left\|\Phi_{A}(s,t)\right\|\leq K_0e^{a|s-t|+\eta t}\Leftrightarrow\frac{1}{K_0e^{a|s-t|+\eta t}}\leq \frac{1}{\left\|\Phi_{A}(s,t)\right\|}.
    \end{equation*}

    From the above, we will obtain the right-hand estimate of (\ref{alpha0alpha1}). For any $t\geq0$, there exist $\sigma_0(t)>0$ such that for any $\sigma\geq\sigma_0(t)$ and $x\neq0$:
    \begin{displaymath}
    \begin{array}{rcl}
    \displaystyle\int_{t}^{t+\sigma}x^{T}\Phi_{A}(t,s)B(s)B^{T}(s)\Phi_{A}^{T}(t,s)x\;ds&\leq& \displaystyle\int_{t}^{t+\sigma}K_0^{2}e^{2a(s-t)+2\eta s}b_1^{2}e^{2\beta_1 s}|x|^{2}\;ds\\\\
    &=&\displaystyle K_0^{2}b_1^{2}|x|^{2}e^{-2at}\int_{t}^{t+\sigma}e^{2(a+\eta+\beta_1)s}\;ds,
    % &\leq&\displaystyle e^{2\eta t}\frac{K_0^{2}b^{2}}{2(a+\eta-w)}\left [e^{2(a+\eta-w)\sigma}-1\right]
    \end{array}
    \end{displaymath}
which implies that
 \begin{displaymath}
    \begin{array}{rcl}
    x^{T}W(t,t+\sigma)x&\leq& 
    \displaystyle \frac{K_0^{2}b_1^{2}}{2(a+\eta+\beta_1)}e^{-2at}\left [e^{2(a+\eta+\beta_1)(t+\sigma)}-e^{2(a+\eta+\beta_1)t}\right]|x|^{2}\\\\
    &=&\displaystyle \frac{K_0^{2}b_1^{2}}{2(a+\eta+\beta_1)}\left [e^{2(a+\eta+\beta_1)\sigma}-1\right]e^{2(\eta+\beta_1)t}|x|^{2}.\\\\
    % &\leq&\displaystyle e^{2\eta t}\frac{K_0^{2}b^{2}}{2(a+\eta-w)}\left [e^{2(a+\eta-w)\sigma}-1\right]
    \end{array}
    \end{displaymath}

Similarly, notice that
    $$\begin{array}{rcl}
         \displaystyle\int_{t}^{t+\sigma}x^{T}\Phi_{A}(t,s)B(s)B^{T}(s)\Phi_{A}^{T}(t,s)x\;ds &\geq&  \displaystyle\int_{t}^{t+\sigma}\frac{1}{K_0^{2}}e^{-2a(s-t)-2\eta t}b_0^{2}e^{-2\beta_0 s}|x|^{2}\;ds\\\\
         &=&\displaystyle\frac{b_0^{2}}{K_0^{2}}|x|^{2}e^{2(a-\eta)t}\int_{t}^{t+\sigma}e^{-2(a+\beta_0)s}\;ds,
    \end{array}$$
which implies
\begin{displaymath}
    \begin{array}{rcl}
         x^{T}W(t,t+\sigma)x &\geq&  \displaystyle\frac{b_0^{2}}{K_0^{2}2(a+\beta_0)}e^{2(a-\eta)t}\left[e^{-2(a+\beta_0)t}-e^{-2(a+\beta_0)(t+\sigma)}\right]|x|^{2}\\\\
         &=&\displaystyle \frac{b_0^{2}}{K_0^{2}2(a+\beta_0)}\left[1-e^{-2(a+\beta_0)\sigma}\right]e^{-2(\eta+\beta_0)t}|x|^{2}
    \end{array}
\end{displaymath}
and the nonuniform complete controllability follows.

As an extra comment, in the previous proof we can notice that $\sigma_0(t)=\sigma_0>0$ is constant and works in order to prove the inequalities for all $t\geq0$.\\

\subsection{An example of a CC but not NUCC system}\label{ex:CC-notNUCC}

In \cite[p.157]{Kalman}, Kalman proved that the LTV control system
\begin{equation}
\label{ex-BV}
\dot{x}=-tx +\sqrt{2(t-1)}e^{-t+1/2}u(t)\quad \textnormal{with $t\geq 1$}.
\end{equation}
is CC but not UCC. 

In fact, it is easy to check that the transition matrix
$\Phi(t,\tau)=e^{\frac{\tau^{2}-t^{2}}{2}}$ does not verify the property of
uniform bounded growth (\ref{BG3}) and, by Lemma \ref{EKC}, the
Kalman condition (\ref{BG}) cannot be verified. Now,
it follows from Proposition \ref{Prop1} that (\ref{ex-BV}) does not have the property of 
uniform complete controllability.

On the other hand, Kalman also verifies that
$$
W(t,t+\sigma)=e^{2(\sigma-1)t+(\sigma-1)^{2}}-e^{-2t+1},
$$
If $\sigma=1$, we will have that
$$
W(t,t+1)=1-e^{-2t+1}>0 \quad \textnormal{since $t\geq 1$},
$$
and for any $t_{0}\geq 1$ we can choose $t_{f}=t_{0}+1$ such that
$$
W(t_{0},t_{f})=1-e^{-2t_{0}+1}>0,
$$
and it follows that (\ref{ex-BV}) is completely controllable.

Now, we will verify that (\ref{ex-BV}) cannot be NUCC. Indeed, otherwise, 
the Gramian inequalities (\ref{alpha0alpha1})--(\ref{K}) will be verified
and Lemma \ref{NUIMBU} would imply the existence of $\alpha(\cdot)\in\mathcal{B}$ 
and $\nu>0$ such that
$$
\Phi(t,\tau)=e^{\frac{\tau^{2}-t^{2}}{2}}\leq \alpha(|t-\tau|)e^{\nu \tau},
$$
which is equivalent to
$$
(\tau-t)(\tau+t)=\tau^{2}-t^{2} \leq 2\ln\left(\alpha(|t-\tau|)\right)+2\nu \tau.
$$

%$\tau=2\beta_{n}$  y $t=\beta_{n}$ con $1<\beta_{n}$ y $\beta_{n}\to +\infty$

%Entonces
%$$
%3\beta_{n}^{2} \leq 2\ln\left(\beta_{n}\right)+4\nu \beta_{n} \Leftrightarrow 3\beta_{n}\leq %\frac{\ln(\alpha(\beta_{n}))}{\beta_{n}}+3
%$$

Now, given the above constant $\nu$, let us consider $\tau=\beta_{n}$  and $t=\beta_{n}-3\nu$ 
where the sequence $\{\beta_n\}$ verifies $1+3\nu<2\beta_{n}$ for any $n\in \mathbb{N}$ and $\beta_{n}\to +\infty$. Then, the above inequality is equivalent to:
$$
3\nu(2\beta_{n}-3\nu)\leq 2\ln\left(\alpha(3\nu)\right)+2\nu \beta_{n} \quad \textnormal{or} \quad 3\nu \leq \frac{2\ln(\alpha(3\nu))}{2\beta_{n}-3\nu}+\frac{2\nu \beta_{n}}{2\beta_{n}-3\nu}.
$$

Now, by letting $n\to +\infty$, and using the fact that $\nu>0$, the above inequality leads to a contradiction.

\section{Feedback stabilization for nonuniformly completely controllable
control systems}

%We have seen that the nonuniform complete controllability is a property more general than UCC but is a particular case of CC. In consequence, given a NUCC control system (\ref{control1}), it is expected to obtain a feedback stabilization result associated with a stability property weaker than the uniform exponential stability but stronger than the asymptotic stability described in Proposition \ref{IMK1}.

\subsection{Nonuniform exponential stabilities and the nonuniform exponential dichotomy}
The main result of this section states that a family of nonuniformly completely controllable linear control systems (\ref{control1})
having the property of nonuniform bounded growth can be stabilized by a feedback gain $F(t)$ such that the closed loop system (\ref{A-BF}) is nonuniformly exponentially stable. Firstly, let us consider the family of Riccati differential equations parametrized by the number $\mathcal{L}>0$: 
\begin{equation}
\label{ER}
\dot{S}(t)+\left(A(t)+\mathcal{L} I\right)^{T} S(t)+S(t)\left(A(t)+ \mathcal{L} I\right)-S(t) B(t) B^{T}(t) S(t)=-I, 
\end{equation}
where $A(\cdot)$ and $B(\cdot)$ are the matrices of the linear control system (\ref{control1}). This
family of equations will play an essential role in the present section, whose main result is:
\begin{theorem}
\label{T2}
Assume that the linear control system \eqref{control1} is nonuniformly completely controllable and that the following additional properties are verified:
\begin{itemize}
\item[\textbf{(H1)}] The corresponding plant has the property of nonuniform bounded growth on $[0,+\infty)$ with
constants $(K_{0},a,\eta)$,
\item[\textbf{(H2)}] The property \eqref{alpha0alpha1} is verified with constants $\mu_{0}>0$ and $\mu_{1}>0$.
\end{itemize}

In addition, if $t\mapsto S_{\mathcal{L}}(t)$ is a solution of the Riccati equation \eqref{ER}
with 
\begin{equation}
\label{L-NU}
\mathcal{L}>2(\theta_{2}+2\theta_{1}), \quad \textit{where $\theta_{1}=\mu_{1}+4\eta$ and $\theta_{2}=\eta+2(\mu_{1}+\mu_{0})$},
\end{equation}
then, for any rate of decay $\mathcal{L}-\theta_{1}>2\theta_{2}+3\theta_{1}$ previously chosen, there exist a constant $M\geq 1$ and a
feedback gain $F(t)$ such that the 
closed loop system \eqref{A-BF} is $e^{(\theta_{1}+\theta_{2})t_{0}}$--nonuniformly exponentially stable, that is, 
\begin{equation}
\label{NUES}
\|\Phi_{A-BF}(t,t_{0})\|\leq Me^{(\theta_{1}+\theta_{2})t_{0}} e^{-(\mathcal{L}-\theta_{1})(t-t_{0})}   \quad \textnormal{for any $t\geq t_{0}$}. 
\end{equation}
\end{theorem}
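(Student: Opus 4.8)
The plan is to read the solution $S_{\mathcal{L}}(t)$ of \eqref{ER} as simultaneously encoding a stabilizing feedback and a Lyapunov function. I would choose the feedback gain $F(t)=B^{T}(t)S_{\mathcal{L}}(t)$, so that the closed loop matrix in \eqref{A-BF} is $A(t)-B(t)B^{T}(t)S_{\mathcal{L}}(t)$, and take $V(t,x)=x^{T}S_{\mathcal{L}}(t)x$ with $S_{\mathcal{L}}$ the positive definite (stabilizing) solution. Differentiating $V$ along the closed loop and replacing $\dot{S}_{\mathcal{L}}$ by its value from \eqref{ER} — where the shift $A+\mathcal{L}I$ produces precisely the term $-2\mathcal{L}S_{\mathcal{L}}$ — all cross terms cancel and one obtains
\[
\dot{V}=-2\mathcal{L}\,V-|B^{T}S_{\mathcal{L}}x|^{2}-|x|^{2}\leq -2\mathcal{L}\,V .
\]
This algebraic identity is the first step, and it pins down $\mathcal{L}$ as a crude decay rate before any nonuniformity is paid.

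Integrating gives $x(t)^{T}S_{\mathcal{L}}(t)x(t)\leq e^{-2\mathcal{L}(t-t_{0})}x(t_{0})^{T}S_{\mathcal{L}}(t_{0})x(t_{0})$ along any closed loop solution, so the whole estimate \eqref{NUES} reduces to controlling the eigenvalues of $S_{\mathcal{L}}$. Concretely I would prove a two-sided bound
\[
c_{1}\,e^{-2\theta_{1}t}\,I\leq S_{\mathcal{L}}(t)\leq c_{2}\,e^{2\theta_{2}t}\,I
\]
for constants $c_{1},c_{2}>0$. Substituting these yields $|x(t)|^{2}\leq (c_{2}/c_{1})\,e^{2\theta_{2}t_{0}}e^{2\theta_{1}t}e^{-2\mathcal{L}(t-t_{0})}|x(t_{0})|^{2}$, and the elementary rearrangement $e^{2\theta_{1}t}e^{-2\mathcal{L}(t-t_{0})}=e^{2\theta_{1}t_{0}}e^{-2(\mathcal{L}-\theta_{1})(t-t_{0})}$ collapses the right hand side to $(c_{2}/c_{1})\,e^{2(\theta_{1}+\theta_{2})t_{0}}e^{-2(\mathcal{L}-\theta_{1})(t-t_{0})}|x(t_{0})|^{2}$. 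Taking square roots and supremizing over unit initial data gives \eqref{NUES} with $M=\sqrt{c_{2}/c_{1}}$, nonuniformity $e^{(\theta_{1}+\theta_{2})t_{0}}$ and decay rate $\mathcal{L}-\theta_{1}$; the loss of $\theta_{1}$ from the raw rate $\mathcal{L}$ is exactly the price of the decaying lower bound on $S_{\mathcal{L}}$.

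The genuine obstacle is the two-sided bound on $S_{\mathcal{L}}$, and this is where all three hypotheses are consumed. I would use the variational description $x^{T}S_{\mathcal{L}}(t)x=\inf_{u}\int_{t}^{\infty}(|x(s)|^{2}+|u(s)|^{2})\,ds$ for the shifted dynamics $\dot{x}=(A+\mathcal{L}I)x+Bu$ started at $x(t)=x$. For the upper bound I insert the admissible control that steers $x$ to the origin on $[t,t+\sigma_{t}]$ (available since NUCC implies complete controllability, Remark \ref{Rmu}) and vanishes afterwards; its energy equals the quadratic form of the inverse controllability gramian of the shifted system, which (H2) bounds through the lower estimate in \eqref{alpha0alpha1}, while the associated state cost is estimated through the nonuniform bounded growth (H1), both contributions carrying a factor $e^{2\theta_{2}t}$. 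For the lower bound I keep only a short initial slab of the integral: for any admissible $u$, either the control energy is already bounded below, or $x(s)$ stays near the free response $\Phi_{A+\mathcal{L}I}(s,t)x$, whose norm (H1) keeps above $c\,e^{-\eta t}|x|$, giving the factor $e^{-2\theta_{1}t}$. The largeness requirement $\mathcal{L}>2(\theta_{2}+2\theta_{1})$ is what I expect to be truly necessary here: it forces the infinite horizon cost to converge in spite of the growth rate $a$ and nonuniformity $\eta$ of the plant, guaranteeing that a bounded, positive definite stabilizing $S_{\mathcal{L}}$ exists at all. Matching the precise exponents $\theta_{1}=\mu_{1}+4\eta$ and $\theta_{2}=\eta+2(\mu_{1}+\mu_{0})$ emerging from the gramian estimates of Lemmas \ref{contasMT}, \ref{cotas2MT} and \ref{NUIMBU} with the constants declared in \eqref{L-NU} is the most delicate bookkeeping, but poses no conceptual difficulty once the mechanism above is in place.
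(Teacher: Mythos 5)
Your proposal is correct in substance and shares the paper's core mechanism: use the Riccati solution $S_{\mathcal{L}}$ simultaneously to define the feedback and as a quadratic Lyapunov function, reduce everything to a two--sided bound $c_{1}e^{-2\theta_{1}t}I\leq S_{\mathcal{L}}(t)\leq c_{2}e^{2\theta_{2}t}I$, and then trade the factor $e^{2\theta_{1}t}$ for $e^{2\theta_{1}t_{0}}e^{2\theta_{1}(t-t_{0})}$ to land on \eqref{NUES}. The packaging differs in two ways. First, the paper takes the half gain $F=\tfrac{1}{2}B^{T}S_{\mathcal{L}}$, rewrites the closed loop as the $\tfrac{\mathcal{L}}{2}$--shifted system \eqref{A+LI-BF}, applies a separate Lyapunov statement (Proposition \ref{T3}, whose hypothesis is what forces $\mathcal{L}>2(\theta_{2}+2\theta_{1})$) and then undoes the shift; your full gain $F=B^{T}S_{\mathcal{L}}$ with the direct identity $\dot{V}=-2\mathcal{L}V-|B^{T}S_{\mathcal{L}}x|^{2}-|x|^{2}$ (which I have checked) reaches the same estimate in one stroke, and in fact only requires $\mathcal{L}-\theta_{1}>\theta_{1}+\theta_{2}$, a strictly weaker largeness condition than \eqref{L-NU} --- this loosens precisely the restriction the paper itself flags as a limitation of its Proposition \ref{T3}. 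Second, where you propose to obtain the bounds on $S_{\mathcal{L}}$ from the LQR variational characterization (steering control for the upper bound, short--time dichotomy for the lower bound), the paper instead invokes Lemma 3 of Ikeda \emph{et al.} \cite{Ikeda} --- whose proof is exactly your mechanism --- applied to the shifted system (via Lemma \ref{L1}), and then substitutes the NUCC estimate \eqref{alpha0alpha1} together with the bounded--growth bounds of Lemma \ref{evolutionoperatorresult} into Ikeda's matrices $D(t)$ and $E(t)$. Be aware that this substitution constitutes the bulk of the paper's proof and is not a formality: the exponents $\theta_{1}=\mu_{1}+4\eta$ and $\theta_{2}=\eta+2(\mu_{1}+\mu_{0})$ emerge from the squared ratios $\operatorname{tr}/\lambda_{\min}$ in $D$ and $E$, and one must verify that only $\mu_{0},\mu_{1},\eta$ --- and not the nonuniformities $\tilde{\mu}_{0},\tilde{\mu}_{1}$ of the $K$--gramian --- enter the final bounds. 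Finally, one motivational claim of yours is inaccurate: the finiteness of the infinite--horizon cost, hence the existence of $S_{\mathcal{L}}$, does not require \eqref{L-NU}; it follows from complete controllability of the shifted system alone (Lemma \ref{L1} combined with Kalman's Proposition 6.6 \cite{Kalman}, as in Remark \ref{exist-ricc}). The largeness of $\mathcal{L}$ is needed only so that the decay rate dominates the nonuniformity $\theta_{1}+\theta_{2}$, and, in the paper's route, to satisfy the hypothesis of Proposition \ref{T3}.
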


\medskip

The above result deserves several comments: 

\noindent 1.- As usual in feedback stabilization results, $F(t)$ is constructed by using a solution of the Riccati equation (\ref{ER}), whose existence is ensured in the next subsection.

\medskip

\noindent 2.- Note that (\ref{NUES}) coincides with (\ref{StaNonU}) when considering
$\lambda=\mathcal{L}-\theta_{1}>0$ and $\varepsilon=\theta_{1}+\theta_{2}$. We also stress that (\ref{L-NU}) implies 
$\varepsilon \in [0,\lambda)$.

\medskip

\noindent 3.- 
As the NUCC is a particular case of CC, we can compare Theorem \ref{T2} with Proposition \ref{IMK1} (\cite[Th.1]{Ikeda}). In fact, if we
consider the measure decay $\delta(t,t_{0})=(\mathcal{L}-\theta_{1})(t-t_{0})$ such that (\ref{L-NU}) is verified, we will have the existence of $a(t_{0})>0$ such that
\begin{equation}
\label{NUES2}
\|\Phi_{A-BF}(t,t_{0})\|\leq a(t_{0})e^{-(\mathcal{L}-\theta_{1})(t-t_{0})}   \quad \textnormal{for any $t\geq t_{0}$}, 
\end{equation}

Despite the formal similarity between (\ref{NUES}) and (\ref{NUES2}), we emphasize that a meticulous reading of \cite[p.724]{Ikeda} shows that the constant $a(t_{0})$
from Proposition \ref{IMK1} is given explicitly by $a(t_{0})=[1+\lambda_{\max}(S_{\mathcal{L}}(t_{0}))]^{1/2}$, where $S_{\mathcal{L}}$ is solution of the Riccati equation (\ref{ER}). In turn, from the expression \eqref{NUES} we can deduce that $a(t_0)=Me^{(\vartheta_1+\vartheta_2)t_0}$. 

\medskip

\noindent  4.- In Proposition \ref{IMK2} (\cite[Th.2]{Ikeda}), corresponding to a UCC system, the exponential decay of the closed loop system can be arbitrarily chosen. In Theorem \ref{T2}, the exponential decay $\mathcal{L}-\theta_{1}$ can be chosen only for $\mathcal{L}>2(\theta_{2}+2\theta_{1})$, where $\theta_1$ and $\theta_2$ depend on the available bounds $\mu_0$ and $\mu_1$ for the gramian $W$ and the number $\eta$ of the bounded growth property. An attentive reading of our proof will show that this restriction is due to a technical result, namely, a sufficient condition of nonuniform exponential stability (Proposition \ref{T3}), and still remains a challenge to overcome.

\medskip

\noindent 5.- Remark \ref{dico} allows us to see Theorem \ref{T2} from the perspective of Definition \ref{NUED}: there exists a feedback gain 
$F(t)$ such that the close loop system (\ref{A-BF}) has a nonuniform exponential dichotomy on $[0,+\infty)$ with the identity as projector and constants $M\geq 1$, $\mathcal{L}-\theta_{1}>0$ and $\theta_{1}+\theta_{2} \in [0,\mathcal{L}-\theta_{1})$.\\

Additionally, Theorem \ref{T2} can be interpreted from a spectral point of view relating NUCC with
the localization of $\Sigma_{NU}$ as far to the left as desired:
\begin{corollary}\label{coro-spectrum}
If a linear control system \eqref{control1} is NUCC and verifies \textbf{(H1)--(H2)},
then for any $\mathcal{L}\in (2(\theta_{2}+2\theta_{1}),+\infty)$, there exists a feedback gain $F(t)$ such that 
the nonuniform spectrum of \eqref{A-BF} verifies 
$$
\Sigma_{NU}(A-BF)\subset (-\infty,-\mathcal{L}+\theta_{2}+2\theta_{1})\subset (-\infty,0).
$$
\end{corollary}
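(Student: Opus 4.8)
The plan is to read off the corollary from the quantitative estimate \eqref{NUES} of Theorem \ref{T2} together with the elementary scalar--shift behaviour of the transition matrix. First I would fix $\mathcal{L}>2(\theta_{2}+2\theta_{1})$ and invoke Theorem \ref{T2} to produce a feedback gain $F(\cdot)$, a constant $M\geq1$, and the bound $\|\Phi_{A-BF}(t,t_{0})\|\leq Me^{(\theta_{1}+\theta_{2})t_{0}}e^{-(\mathcal{L}-\theta_{1})(t-t_{0})}$ for all $t\geq t_{0}\geq0$. The key observation is that for every $\gamma\in\mathbb{R}$ the transition matrix of the shifted plant $\dot{x}=[(A(t)-B(t)F(t))-\gamma I]x$ is exactly $e^{-\gamma(t-t_{0})}\Phi_{A-BF}(t,t_{0})$, since $\gamma I$ commutes with $A-BF$; this converts the spectral question of Definition \ref{NUED} into a family of growth estimates indexed by $\gamma$.

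Next I would test Definition \ref{NUEd-A} with the trivial projector $P(\cdot)\equiv I$ (the purely contractive case of Remark \ref{dico}). Combining the two facts above gives $\|\Phi_{(A-BF)-\gamma I}(t,t_{0})\|\leq Me^{(\theta_{1}+\theta_{2})t_{0}}e^{-(\mathcal{L}-\theta_{1}+\gamma)(t-t_{0})}$, so the nonuniform exponential dichotomy holds as soon as $\lambda:=\mathcal{L}-\theta_{1}+\gamma$ satisfies $\lambda>\varepsilon:=\theta_{1}+\theta_{2}\geq0$, which is precisely the condition $\gamma>-\mathcal{L}+\theta_{2}+2\theta_{1}$. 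Here one uses that \textbf{(H1)}--\textbf{(H2)} force $\theta_{1},\theta_{2}>0$, so the constraint $\varepsilon\in[0,\lambda)$ is genuinely met. For every such $\gamma$ the shifted system admits a NUED, hence $\gamma\notin\Sigma_{NU}(A-BF)$; letting $\gamma$ range over $(-\mathcal{L}+\theta_{2}+2\theta_{1},+\infty)$ yields the inclusion $\Sigma_{NU}(A-BF)\subseteq(-\infty,-\mathcal{L}+\theta_{2}+2\theta_{1}]$.

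The one delicate point, and the step I expect to be the main obstacle, is upgrading this closed bound to the \emph{open} interval in the statement: at the boundary value $\gamma=-\mathcal{L}+\theta_{2}+2\theta_{1}$ the shift produces $\lambda=\varepsilon$, so Definition \ref{NUEd-A} cannot be applied verbatim and membership in $\Sigma_{NU}$ cannot be decided from \eqref{NUES} alone. I would circumvent this by exploiting that the corollary only asserts the existence of \emph{one} feedback gain: given the target $\mathcal{L}$, apply Theorem \ref{T2} instead to some $\mathcal{L}_{1}>\mathcal{L}$, which is still admissible since $\mathcal{L}_{1}>2(\theta_{2}+2\theta_{1})$. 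The resulting feedback gives $\Sigma_{NU}(A-BF)\subseteq(-\infty,-\mathcal{L}_{1}+\theta_{2}+2\theta_{1}]$, and because $-\mathcal{L}_{1}+\theta_{2}+2\theta_{1}<-\mathcal{L}+\theta_{2}+2\theta_{1}$ this lies strictly inside $(-\infty,-\mathcal{L}+\theta_{2}+2\theta_{1})$. The final inclusion $(-\infty,-\mathcal{L}+\theta_{2}+2\theta_{1})\subset(-\infty,0)$ is then immediate from $\mathcal{L}>2(\theta_{2}+2\theta_{1})>\theta_{2}+2\theta_{1}$, which makes $-\mathcal{L}+\theta_{2}+2\theta_{1}<0$. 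Everything except the boundary handling is the routine shift computation and a direct check against the dichotomy definition.
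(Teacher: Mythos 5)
Your proof is correct, and its core — shifting by $\gamma I$, using $\Phi_{A-BF-\gamma I}(t,t_{0})=e^{-\gamma(t-t_{0})}\Phi_{A-BF}(t,t_{0})$, and testing Definition \ref{NUEd-A} with the identity projector against the bound \eqref{NUES} — is exactly the paper's argument. The genuine difference is your treatment of the boundary point, and there your version is in fact more careful than the paper's own proof. The paper establishes the equivalent inclusion $[-\mathcal{L}+\theta_{2}+2\theta_{1},+\infty)\subset\left[\Sigma_{NU}(A-BF)\right]^{c}$ by asserting the strict inequality \eqref{specstab} for every $\lambda\geq-\mathcal{L}+\theta_{2}+2\theta_{1}$; but at the endpoint $\lambda=-\mathcal{L}+\theta_{2}+2\theta_{1}$ one only obtains $\lambda+\mathcal{L}-\theta_{1}=\theta_{1}+\theta_{2}$, i.e.\ the decay rate equals the nonuniformity exponent, which violates the requirement $\varepsilon\in[0,\lambda)$ of Definition \ref{NUEd-A} — precisely the obstruction you flagged. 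Taken literally, the paper's argument (with the feedback built at that same $\mathcal{L}$) only yields $\Sigma_{NU}(A-BF)\subset(-\infty,-\mathcal{L}+\theta_{2}+2\theta_{1}]$. Your repair — invoke Theorem \ref{T2} at some admissible $\mathcal{L}_{1}>\mathcal{L}$ and use $(-\infty,-\mathcal{L}_{1}+\theta_{2}+2\theta_{1}]\subset(-\infty,-\mathcal{L}+\theta_{2}+2\theta_{1})$ — is legitimate, since the corollary only asserts the existence of one feedback gain, and it closes this gap cleanly; what it buys is exactly the open-interval containment that the paper states but does not quite prove. The remaining checks ($\theta_{1},\theta_{2}>0$ under \textbf{(H1)}--\textbf{(H2)}, hence $-\mathcal{L}+\theta_{2}+2\theta_{1}<0$) agree with the paper.
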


\begin{proof}
We will prove an equivalent contention, namely
$$
[-\mathcal{L}+\theta_{2}+2\theta_{1},+\infty)\subset \left[\Sigma_{NU}(A-BF)\right]^{c}.
$$

Firstly, notice that if $\lambda \geq -\mathcal{L}+(\theta_{2}+2\theta_{1})$ it follows that
\begin{equation}
\label{specstab}
\lambda+\mathcal{L}-\theta_{1}>\theta_{1}+\theta_{2}>0.
\end{equation}

Secondly, by Theorem \ref{T2} we know that (\ref{NUES}) is verified and we multiply it
by $e^{-\lambda(t-t_{0})}$ with $t\geq t_{0}$ and $\lambda\geq -\mathcal{L}+(\theta_{2}+2\theta_{1})$,
which leads to
$$
\|e^{-\lambda(t-t_0)}\Phi_{A-BF}(t,t_0)\|\leq Me^{(\theta_1+\theta_2)t_0}e^{-(\lambda+\mathcal{L}-\theta_1)(t-t_0)} \quad \textnormal{for any $t\geq t_{0}\geq 0$}.
$$

Notice that, as $\Phi_{A-BF-\lambda I}(t,s)=\Phi_{A-BF}(t,s)e^{-\lambda(t-s)}$, the above estimation
can be written as follows:
$$
\|\Phi_{A-BF-\lambda I}(t,t_0)\|\leq Me^{(\theta_1+\theta_2)t_0}e^{-(\lambda+\mathcal{L}-\theta_1)(t-t_0)}
\quad \textnormal{for any $t\geq t_{0}\geq 0$}.
$$

Finally, the above inequality combined with (\ref{specstab}) implies that
the system $$\dot{x}=(A(t)-B(t)F(t)-\lambda I)x$$
admits nonuniform exponential dichotomy with identity projector. Then we have that  
$\lambda \in \left[\Sigma_{NU}(A-BF)\right]^{c}$, which concludes the proof.
\end{proof}

% if a LTV control system (\ref{control1}) is NUCC in the sense given by \textbf{(H1)--(H2)}, then for any  
%$L\in (2(\theta_{2}+2\theta_{1}),+\infty)$, there exists a feedback gain $F(t)$ such that 
%the nonuniform spectrum of (\ref{A-BF}) verifies $\Sigma_{NED}(A-BF)\subset (-\infty,-(L-(2\theta_{1}+\theta_{2})) \subset (-\infty,0)$. In consequence, Theorem 2 is also a result relating NUCC with spectral localization.

%Theorem \ref{T2} can be seen as a limit case of Proposition \ref{IMK2}
%when the nonuniformities of (\ref{alpha0alpha1}) and $\eta$ are equal to zero. This implies that 
%$\theta_{1}=\theta_{2}=\varepsilon=0$.

\subsection{Proof of Theorem \ref{T2}} 
The proof will be made in several steps.

\medskip

\noindent \textit{Step 1: Auxiliary results}. Riccati equations are ubiquitous in feedback stabilization results and, in this case, the equation (\ref{ER})
will be related to the following result:
\begin{proposition} 
\label{T3}
If the linear system
\begin{equation}
\label{Piloto}
\dot{z}=U(t)z
\end{equation}
verifies the following properties:

\begin{itemize}
\item[i)] There exists a positive definite operator $S(t)\in M_{n}(\mathbb{R})$ of class $C^1$ in $t>0$ and constants $C_1>0$, $C_2>0$, $\phi_1\geq0$ and $\phi_2\geq0$ such that
\begin{equation}
\label{CotaparaS}
C_1e^{-2\phi_1 t}I\leq S(t) \leq C_2 e^{2\phi_2 t}I, 
\end{equation}
\item[ii)] There exists a constant $\mathcal{L}>2(\phi_2+2\phi_1)$ such that
\begin{equation}
\label{RiccatiS}
\dot{S}(t)+U^{T}(t) S(t)+S(t) U(t) \leq-(\mathrm{Id}+\mathcal{L} S(t)),
\end{equation}
\end{itemize}
then the system \eqref{Piloto} is $e^{(\phi_{1}+\phi_{2})t_{0}}$--nonuniformly exponentially stable, namely, there exist constants $M\geq 1$, $\lambda=\frac{\mathcal{L}}{2}-\phi_{1}>0$
and $\varepsilon=\phi_{1}+\phi_{2}\in (0,\lambda)$ such that
\begin{equation*}
% \label{BU-ex}
||\Phi_{U}(t,t_{0})||\leq  Me^{(\phi_1+\phi_2)t_{0}}e^{-(\frac{\mathcal{L}}{2}-\phi_1)(t-t_{0})}  \quad \textnormal{for any $t\geq t_{0} \geq 0$}.
\end{equation*}

\end{proposition}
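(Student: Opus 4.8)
The plan is to run a Lyapunov argument built on the quadratic form attached to $S(t)$. Fix $t_{0}\geq 0$ and $z_{0}\in\R^{n}$, let $z(t)=\Phi_{U}(t,t_{0})z_{0}$ be the corresponding solution of \eqref{Piloto}, and set $v(t)=\langle S(t)z(t),z(t)\rangle = z(t)^{T}S(t)z(t)$. Since $S(\cdot)$ is of class $C^1$ and $z(\cdot)$ is absolutely continuous, $v$ is absolutely continuous, and differentiating while inserting $\dot z=U(t)z$ gives
\begin{displaymath}
\dot v(t)=z(t)^{T}\left[\dot S(t)+U^{T}(t)S(t)+S(t)U(t)\right]z(t)
\end{displaymath}
for almost every $t$. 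Hypothesis ii), i.e. the Riccati inequality \eqref{RiccatiS}, then yields
\begin{displaymath}
\dot v(t)\leq z(t)^{T}\left[-(I+\mathcal{L}S(t))\right]z(t)=-|z(t)|^{2}-\mathcal{L}\,v(t)\leq -\mathcal{L}\,v(t),
\end{displaymath}
where in the last step I discard the nonpositive term $-|z(t)|^{2}$.

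Next I would integrate this scalar differential inequality: Gr\"onwall's lemma gives $v(t)\leq v(t_{0})e^{-\mathcal{L}(t-t_{0})}$ for all $t\geq t_{0}$. The two-sided bound \eqref{CotaparaS} lets me sandwich $v$, using the lower bound at time $t$ and the upper bound at time $t_{0}$, namely $v(t)\geq C_{1}e^{-2\phi_{1}t}|z(t)|^{2}$ and $v(t_{0})\leq C_{2}e^{2\phi_{2}t_{0}}|z_{0}|^{2}$. Chaining the three estimates produces
\begin{displaymath}
C_{1}e^{-2\phi_{1}t}|z(t)|^{2}\leq C_{2}e^{2\phi_{2}t_{0}}e^{-\mathcal{L}(t-t_{0})}|z_{0}|^{2}.
\end{displaymath}

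The concluding step is the bookkeeping of the exponents, which is the only delicate point. Splitting $2\phi_{1}t=2\phi_{1}(t-t_{0})+2\phi_{1}t_{0}$ and solving for $|z(t)|^{2}$ gives
\begin{displaymath}
|z(t)|^{2}\leq \frac{C_{2}}{C_{1}}\,e^{2(\phi_{1}+\phi_{2})t_{0}}\,e^{-(\mathcal{L}-2\phi_{1})(t-t_{0})}|z_{0}|^{2},
\end{displaymath}
so that, taking square roots and then the supremum over $|z_{0}|=1$,
\begin{displaymath}
\|\Phi_{U}(t,t_{0})\|\leq M e^{(\phi_{1}+\phi_{2})t_{0}}e^{-\left(\frac{\mathcal{L}}{2}-\phi_{1}\right)(t-t_{0})},\qquad M:=\sqrt{C_{2}/C_{1}}.
\end{displaymath}
It then remains only to verify the constants. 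Evaluating \eqref{CotaparaS} at $t=0$ forces $C_{1}\leq C_{2}$, hence $M\geq 1$; and the hypothesis $\mathcal{L}>2(\phi_{2}+2\phi_{1})$ is precisely what guarantees $\lambda=\frac{\mathcal{L}}{2}-\phi_{1}>\phi_{1}+\phi_{2}=\varepsilon\geq 0$, i.e. $\varepsilon\in[0,\lambda)$, matching Definition \ref{Nuni}. I expect no genuine obstacle here; the argument is conceptual rather than computational, the two things to get right being the moment at which $-|z|^{2}$ is dropped (retaining it would complicate Gr\"onwall without improving the decay rate) and the splitting of $2\phi_{1}t$ so that the initial time $t_{0}$ absorbs the nonuniformity $e^{(\phi_{1}+\phi_{2})t_{0}}$ while the elapsed time $t-t_{0}$ carries the exponential decay.
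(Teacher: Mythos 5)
Your proposal is correct and follows essentially the same argument as the paper: the Lyapunov function $v(t)=\langle S(t)z(t),z(t)\rangle$, the Riccati inequality yielding $\dot v\leq-\mathcal{L}v$, the comparison/Gr\"onwall step, the sandwich via \eqref{CotaparaS}, and the same exponent splitting $2\phi_{1}t=2\phi_{1}(t-t_{0})+2\phi_{1}t_{0}$. The only cosmetic difference is that you obtain $M=\sqrt{C_{2}/C_{1}}\geq 1$ by evaluating \eqref{CotaparaS} at $t=0$, whereas the paper simply sets $M=\max\bigl\{1,\sqrt{C_{2}/C_{1}}\bigr\}$.
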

\begin{proof}
Firstly, let us construct the map $H:(0,+\infty) \times \R^{n} \rightarrow [0,+\infty)$ defined by
\begin{equation}
\label{DefH}
H(t, x)=\langle S(t) x, x\rangle,
\end{equation}
where $S(t)$ is the  positive definite operator stated in i).  

%Notice that $V$ is a quadratic 
%Lyapunov function for (\ref{Piloto}) since verifies:
%\begin{itemize} 
%\item [\textbf{(L1)}] For every $t>0$ and $x \in \R^{n}$,
%$$
%|x| \leq|V(t, x)| \leq C_{2}e^{\phi_{2} t}|x|,
%$$
%\item [\textbf{(L2)}] For every $t \geq s>0$ and $x \in \R^{n}$,
%$$
%V(s, x) \leq V(t, \Phi_{U}(t, s) x),
%$$
%\item [\textbf{(L3)}] There exists a constant $\gamma>0$ such that for every $t \geq s>0$ and $x \in \R^{n}$,
%$$
%|V(t, \Phi_{U}(t, s) x)| \leq e^{-\gamma(t-s)}|V(s, x)|.
%$$
%\end{itemize}

Let $t\mapsto x(t)$ be a solution of (\ref{Piloto}), by using (\ref{CotaparaS}) we easily
deduce that 
\begin{equation}
\label{inegA}
 C_1e^{-2\phi_1 t}|x(t)|^2\leq H(t,x(t))\leq C_{2}e^{2\phi_{2}t}|x(t)|.
\end{equation}

Secondly, note that for $t \geq \tau\geq 0$, we have that
\begin{equation}
\label{Prosof}
H(t, x(t)) \leq e^{-\mathcal{L}(t-\tau)} H(\tau, x(\tau)) .
\end{equation}

In fact, by (\ref{DefH}) combined with (\ref{RiccatiS}), it is straightforward to deduce that
\begin{displaymath}
\begin{array}{rcl}
\frac{d}{dt}H(t,x(t))&=&x^{T}(t)\{S'(t)+S(t)U(t)+U^{T}(t)S(t)\}x(t),\\
  &\leq &  -|x(t)|^{2}-\mathcal{L}x^{T}(t)S(t)x(t), \\
  &\leq & - \mathcal{L}H(t,x(t))
\end{array}
\end{displaymath}
and (\ref{Prosof}) follows by the comparison lemma for scalar differential equations.

Finally, by considering (\ref{inegA}) and (\ref{Prosof}), we obtain that for any $t\geq \tau$:
$$
\begin{array}{rcl}
|\Phi_{U}(t, \tau) x(\tau)|^2 & =&|x(t)|^2
 \leq \displaystyle\frac{1}{C_1}e^{2\phi_1 t} H(t, x(t)) \leq \frac{1}{C_1}e^{2\phi_1 t}e^{-\mathcal{L}(t-\tau)} H(\tau, x(\tau)), \\
& \leq& \displaystyle \frac{1}{C_1} e^{2\phi_1 t}e^{-\mathcal{L}(t-\tau)}C_2 e^{2\phi_2 \tau}|x(\tau)|^2,\\
& \leq& \displaystyle\frac{C_2}{C_1}e^{-(\mathcal{L}-2\phi_1)(t-\tau)+2(\phi_1+\phi_2) \tau}|x(\tau)|^2,
\end{array}
$$
and therefore, if we define $M=\max\left \{1,\sqrt{\frac{C_2}{C_1}}\right \}$, we have that
$$
\|\Phi_{U}(t, \tau)\| \leq Me^{(\phi_{1}+\phi_{2})\tau}e^{-(\frac{\mathcal{L}}{2}-\phi_1)(t-\tau)}.
$$

Now, notice that $\mathcal{L}>2(2\phi_1+\phi_2)$ implies $\lambda=\frac{\mathcal{L}}{2}-\phi_{1}>\phi_{1}+\phi_{2}=\varepsilon>0$ and the result follows.
\end{proof}

The above result is inspired in Theorem 2.2 from \cite{Liao}, which provides a
necessary condition ensuring that (\ref{Piloto}) is a nonuniform contraction, namely, a type of nonuniform asymptotic stability that is more general than those stated in Definition \ref{Nuni}. The fact of working with a more specific stability allows us to consider a less restrictive condition as 
(\ref{CotaparaS}).

%The above result is inspired in Theorem 2.2 from \cite{Liao}, which provides a
%necessary condition ensuring that (\ref{Piloto}) is a nonuniform contraction, namely, a type of %nonuniform asymptotic stability that is more general than those stated in Definition \ref{Nuni}. The %fact of working with a more specific stability allow us to consider a less restrictive condition as 
%(\ref{CotaparaS}).

%Proposition \ref{T3} is inspired in Theorem 2.2 from \cite{Liao} and includes appropriate conditions that adapt organically and thus we obtain a stability result in a nonuniform framework. Nevertheless, 
%from another perspective, our result is only focused in the nonuniform exponential stability while in \cite{Liao} the result
%is adapted for several types of nonuniform asymptotic stabilities.

\begin{lemma}
\label{L1}
    If the linear control system \eqref{control1} is nonuniformly completely controllable, then the perturbed control system
\begin{equation}
\label{LTV-l}
\dot{y}(t)=[A(t)+\ell I]y(t)+B(t)u(t),
\end{equation}
with $\ell>0$, is also nonuniformly completely controllable.
\end{lemma}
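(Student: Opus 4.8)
The plan is to exploit the explicit relationship between the transition matrix of the perturbed plant and that of the original plant. Since the perturbation $\ell I$ is a scalar multiple of the identity, a direct computation shows that the transition matrix of $\dot{y}=[A(t)+\ell I]y$ factors as
\[
\Phi_{A+\ell I}(t,s) = e^{\ell(t-s)}\Phi_{A}(t,s).
\]
With this identity in hand, I would substitute it into the definitions of the two gramians $W$ and $K$ associated with the perturbed system and simply track how the extra exponential factor interacts with the integration over the window $[t,t+\sigma]$.

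For the gramian $W$, inserting the identity gives
\[
W_{A+\ell I}(t, t+\sigma) = \int_t^{t+\sigma} e^{2\ell(t-s)}\Phi_{A}(t,s)B(s)B^{T}(s)\Phi_{A}^{T}(t,s)\, ds.
\]
On the interval $s\in[t,t+\sigma]$ one has $e^{-2\ell\sigma}\leq e^{2\ell(t-s)}\leq 1$, so the integrand is squeezed and I obtain
\[
e^{-2\ell\sigma}\, W(t,t+\sigma) \leq W_{A+\ell I}(t,t+\sigma) \leq W(t,t+\sigma).
\]
Combining this with the NUCC bounds \eqref{alpha0alpha1} for the original system yields a bound of exactly the same form, with the replacements $\alpha_0(\sigma)\mapsto e^{-2\ell\sigma}\alpha_0(\sigma)$ and $\alpha_1(\sigma)\mapsto\alpha_1(\sigma)$, while the nonuniformities $\mu_0,\mu_1$ and the time $\sigma_0(t)$ remain unchanged.

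An entirely parallel computation handles the gramian $K$. Using $\Phi_{A+\ell I}(t+\sigma,s)=e^{\ell(t+\sigma-s)}\Phi_{A}(t+\sigma,s)$ together with $1\leq e^{2\ell(t+\sigma-s)}\leq e^{2\ell\sigma}$ for $s\in[t,t+\sigma]$, I get
\[
K(t,t+\sigma) \leq K_{A+\ell I}(t,t+\sigma) \leq e^{2\ell\sigma}\, K(t,t+\sigma),
\]
which, inserted into \eqref{K}, reproduces a bound of the required form with $\beta_0(\sigma)\mapsto\beta_0(\sigma)$ and $\beta_1(\sigma)\mapsto e^{2\ell\sigma}\beta_1(\sigma)$, again preserving $\tilde{\mu}_0,\tilde{\mu}_1$. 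Since the new coefficient maps stay strictly positive functions of $\sigma$, both conditions \eqref{alpha0alpha1} and \eqref{K} hold for the perturbed system with the same $\sigma_0(t)$, so it is NUCC.

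I do not anticipate a genuine obstacle here: the argument is essentially a bookkeeping exercise once the transition-matrix identity is in place. The only point requiring a little care is to confirm that the modifying factors $e^{\pm 2\ell\sigma}$ depend only on $\sigma$ and not on $t$, so that they can be absorbed into the $\sigma$-dependent functions $\alpha_i,\beta_i$ without disturbing the structure of the nonuniformities $e^{\pm 2\mu_i t}$; this separation of variables is precisely what makes the perturbed bounds fit the NUCC template.
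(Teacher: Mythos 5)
Your proof is correct, and for half of it you take a genuinely more direct route than the paper. The treatment of $W$ is identical: the paper also factors $\Phi_{A+\ell I}(t,s)=e^{\ell(t-s)}\Phi_A(t,s)$ and squeezes $e^{2\ell(t-s)}$ between $e^{-2\ell\sigma}$ and $1$ on the integration window, arriving at the same bound with $\alpha_0(\sigma)$ replaced by $e^{-2\ell\sigma}\alpha_0(\sigma)$. Where you diverge is the gramian $K$: the paper does \emph{not} compute $K_\ell$ directly. Instead it observes that the perturbed plant inherits the nonuniform Kalman property \eqref{crec-acot} (using Lemma \ref{NUIMBU} for the original system and absorbing $e^{\ell(t-\tau)}$ into $\alpha(|t-\tau|)$, which needs $\ell>0$), and then invokes the trifecta Theorem \ref{p2l3} — the perturbed \eqref{alpha0alpha1} together with the perturbed \eqref{crec-acot} imply the perturbed \eqref{K}. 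Your direct squeeze $K(t,t+\sigma)\leq K_{A+\ell I}(t,t+\sigma)\leq e^{2\ell\sigma}K(t,t+\sigma)$, justified because the integrand is positive semidefinite and $1\leq e^{2\ell(t+\sigma-s)}\leq e^{2\ell\sigma}$ on $[t,t+\sigma]$, is perfectly valid and buys three things: it is self-contained (no dependence on Lemma \ref{NUIMBU} or Theorem \ref{p2l3}), it treats both gramians symmetrically, and it produces explicit modified coefficient functions ($\beta_0$ unchanged, $\beta_1\mapsto e^{2\ell\sigma}\beta_1$) with the nonuniformities and $\sigma_0(t)$ untouched, whereas the paper's detour yields less explicit constants involving $\alpha(\cdot)$ and $\nu$. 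What the paper's route buys in exchange is economy on the page — given that its structural results are already established, the $K$ bound follows in two lines — and it showcases how the trifecta can be reused; but as a standalone argument yours is the more elementary one.
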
    

\begin{proof}
As we said before, the transition matrix associated to the plant
of (\ref{LTV-l}) is given by $\Phi_{A+\ell I}(t,s)=\Phi_{A}(t,s)e^{\ell(t-s)}$, then the corresponding controllability gramian matrix  of (\ref{LTV-l}) is
given by:
\begin{equation*}
\begin{array}{rcl}
W_{\ell}(t,t+\sigma)&=&\displaystyle\int_{t}^{t+\sigma}\Phi_{A+\ell I}(t,s)B(s)B^{T}(s)\Phi_{A+\ell I}^{T}(t,s)\;ds, \\\\
&=&\displaystyle\int_{t}^{t+\sigma}e^{2\ell(t-s)}\Phi_{A}(t,s)B(s)B^{T}(s)\Phi_{A}^{T}(t,s)\;ds.
\end{array}
\end{equation*}

By considering the nonuniform complete controllability of the system (\ref{control1}), then for any $t\geq0$, there exists $\sigma_0(t)>0$ such that for any $\sigma\geq\sigma_0(t)$ the estimate (\ref{alpha0alpha1}) is satisfied, then by combining this fact with the inequality $e^{-\ell \sigma}\leq e^{\ell(t-s)}\leq 1$ for any $t\leq s\leq t+\sigma$, we can deduce that
\begin{equation}
\label{EsGramPer}
0<\alpha_0(\sigma)e^{-2\ell\sigma}e^{-2\mu_0 t}I\leq W_{\ell}(t,t+\sigma)\leq\alpha_1(\sigma)e^{2\mu_1 t}I.
\end{equation}

By using again $\Phi_{A+\ell I}(t,s)=\Phi(t,s)e^{\ell(t-s)}$ combined with (\ref{crec-acot}) and $\ell>0$, it is straightforward to see that the plant of (\ref{LTV-l}) has the nonuniform Kalman's condition. Finally, Theorem \ref{p2l3} implies that (\ref{K}) is satisfied for $K_{\ell}(t,t+\sigma)$ and the Lemma follows.
\end{proof}

\begin{remark}
\label{exist-ricc}
As we pointed out in Remark \ref{Rmu}, the nonuniform complete controllability of \eqref{LTV-l} implies  its complete controllability, then by the Lemma \ref{L1} with $\ell=\mathcal{L}$ combined with Proposition 6.6 from \cite[p.159]{Kalman} and the assumptions \textbf{(H1)--(H2)}, the existence of solutions $S_{\mathcal{L}}(\cdot)$ of the Riccati equation \eqref{ER} is ensured\footnote{By Proposition 6.6 from \cite{Kalman} we know that
$S_{\mathcal{L}}(t)=\lim\limits_{t_{1}\to +\infty}\Pi(t,0,t_{1})$ where $t\mapsto \Pi(t,0,t_{1})$ is solution of (\ref{ER}) for any $t\leq t_{1}$ and verifies the 
terminal condition $\Pi(t_{1},0,t_{1})=0$.}. 
\end{remark}

Notice that if we consider a linear feedback input $u(t)=-F(t)y(t)$, the control system (\ref{LTV-l}) with the particular choice $\ell=\frac{\mathcal{L}}{2}$ becomes:
\begin{equation}
\label{A+LI-BF}
\dot{y}(t)=\left[A(t)+\frac{\mathcal{L}}{2} I\right]y(t)+B(t)u(t)=\left[A(t)+\frac{\mathcal{L}}{2} I-B(t)F(t)\right]y(t).
\end{equation}

The Proposition \ref{T3} will be used to deduce sufficient conditions ensuring 
the nonuniform exponential stability for
the linear system (\ref{A+LI-BF}). In order to address this task, we need to introduce a second result, which is an adaptation from Ikeda {\it et al.}  \cite[Lemma 4]{Ikeda} to the nonuniform context:

\begin{lemma}
\label{evolutionoperatorresult}
If the linear control system \eqref{control1} 
satisfies the hypothesis \textbf{(H1)} 
then there exist functions $\gamma_1(\cdot)$ and $\gamma_2(\cdot)$ such that for any $\sigma>0$ we have that:
\begin{equation}
\label{integralPhi}
\gamma_1(\sigma)e^{-2\eta t}I\leq\displaystyle\int_{t}^{t+\sigma}\Phi_{A}^{T}(s,t)\Phi_{A}(s,t)\;ds\leq \gamma_2(\sigma)e^{2\eta t}I.
\end{equation}
\end{lemma}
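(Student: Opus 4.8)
The plan is to reduce the matrix inequality \eqref{integralPhi} to a pointwise scalar estimate on the integrand and then integrate. For any $x\in\R^{n}$ we have
$$x^{T}\left(\int_{t}^{t+\sigma}\Phi_{A}^{T}(s,t)\Phi_{A}(s,t)\,ds\right)x=\int_{t}^{t+\sigma}|\Phi_{A}(s,t)x|^{2}\,ds,$$
so by the characterization of the semidefinite ordering through quadratic forms recalled in the preliminaries, it suffices to bound $|\Phi_{A}(s,t)x|^{2}$ from above and below by multiples of $|x|^{2}$, uniformly for $s\in[t,t+\sigma]$, and integrate over the interval of length $\sigma$.

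For the upper bound I would apply hypothesis \textbf{(H1)} directly. Since $s\geq t$, the nonuniform bounded growth estimate \eqref{Phibound} applied to the plant gives $\|\Phi_{A}(s,t)\|\leq K_{0}e^{\eta t}e^{a(s-t)}$, whence $|\Phi_{A}(s,t)x|^{2}\leq K_{0}^{2}e^{2\eta t}e^{2a(s-t)}|x|^{2}$. Integrating over $[t,t+\sigma]$ yields the upper estimate with nonuniformity $e^{2\eta t}$ and $\gamma_{2}(\sigma)=K_{0}^{2}\,\tfrac{e^{2a\sigma}-1}{2a}$ (with the obvious value $\gamma_{2}(\sigma)=K_{0}^{2}\sigma$ in the degenerate case $a=0$).

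The hard part will be the lower bound, because \textbf{(H1)} only controls $\|\Phi_{A}\|$ from above. The key is to exploit the group property of the transition matrix: writing $x=\Phi_{A}(t,s)\Phi_{A}(s,t)x$ gives $|x|\leq\|\Phi_{A}(t,s)\|\,|\Phi_{A}(s,t)x|$, hence $|\Phi_{A}(s,t)x|\geq|x|/\|\Phi_{A}(t,s)\|$. Here $\Phi_{A}(t,s)$ runs backward in time, its second (larger) argument being $s$, so \eqref{Phibound} now produces the nonuniformity $e^{\eta s}$: for $s\in[t,t+\sigma]$ we get $\|\Phi_{A}(t,s)\|\leq K_{0}e^{\eta s}e^{a(s-t)}\leq K_{0}e^{\eta t}e^{(\eta+a)\sigma}$. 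Therefore $|\Phi_{A}(s,t)x|^{2}\geq K_{0}^{-2}e^{-2\eta t}e^{-2(\eta+a)\sigma}|x|^{2}$, and integrating gives the lower estimate with nonuniformity $e^{-2\eta t}$ and $\gamma_{1}(\sigma)=\tfrac{\sigma}{K_{0}^{2}}e^{-2(\eta+a)\sigma}$.

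Finally I would repackage the two scalar bounds as the matrix inequality \eqref{integralPhi}. The only delicate point, beyond keeping track of which transition-matrix norm carries the $e^{\eta t}$ versus the $e^{\eta s}$ factor (this is exactly where the inversion trick transfers the nonuniformity from the large endpoint $s$ back to $t$, up to the controllable extra factor $e^{\eta\sigma}$ absorbed into $\gamma_{1}$), is the asymmetry between the two bounds; this asymmetry is intrinsic and accounts for the opposing signs $e^{\pm2\eta t}$ in the statement.
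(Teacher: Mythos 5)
Your proposal is correct and follows essentially the same route as the paper's proof: reduce \eqref{integralPhi} to quadratic forms, apply \textbf{(H1)} directly for the upper bound, and obtain the lower bound from the inversion identity $|x|=|\Phi_{A}(t,s)\Phi_{A}(s,t)x|$, which transfers the nonuniformity $e^{\eta s}$ back to $e^{\eta t}$. The only (immaterial) difference is that you bound $e^{\eta s}\leq e^{\eta(t+\sigma)}$ crudely before integrating, whereas the paper integrates $e^{-2a(s-t)-2\eta s}$ exactly, yielding a slightly sharper $\gamma_{1}(\sigma)$; both choices are valid.
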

\begin{proof}
By \textbf{(H1)} we know that the plant has the property of nonuniform bounded growth with 
constants $(K_0,a,\eta)$. Then, by using (\ref{Phibound}) we can write 
    $$
    % \begin{array}{ccccc}
    \displaystyle
    \frac{1}{K_0^{2}}e^{-2a|t-s|-2\eta s}|x|^{2}\leq |\Phi_{A}(s,t)x|^{2}\leq K_0^{2}e^{2a|s-t|+2\eta t}|x|^{2} \quad \textnormal{for any $x\neq 0$},
$$ 
where the left estimation is obtained by noticing that $|x|=|\Phi_{A}(t,s)\Phi_{A}(s,t)x|$ combined
with (\ref{Phibound}). Now, we can deduce
$$\displaystyle
    \int_{t}^{t+\sigma}\frac{1}{K_0^{2}}e^{-2a(s-t)-2\eta s}|x|^{2}\;ds\leq\displaystyle\int_{t}^{t+\sigma}|\Phi_{A}(s,t)x|^{2}\;ds\leq\displaystyle\int_{t}^{t+\sigma} K_0^{2}e^{2a(s-t)+2\eta t}|x|^{2}\;ds.
    % \end{array}
    $$

In relation to the left hand inequality, we can ensure that
$$
    \begin{array}{rcl}
    \displaystyle\int_{t}^{t+\sigma} \frac{1}{K_0^{2}}e^{-2a(s-t)-2\eta s}|x|^{2}\;ds&=&\displaystyle\frac{1}{K_0^{2}2(a+\eta)}\left [e^{-2\eta t}-e^{-2a\sigma-2\eta t}\right ]|x|^2,\\\\
    &=&\displaystyle\frac{1}{K_0^{2}2(a+\eta)}e^{-2\eta t}\left [1-e^{-2a\sigma}\right ]|x|^{2}
    \end{array}
    $$
and about the right expression we have that
$$
    \begin{array}{rcl}
    \displaystyle\int_{t}^{t+\sigma} K_0^{2}e^{2a(s-t)+2\eta t}|x|^{2}\;ds&=&\displaystyle\frac{K_0^{2}}{2a}\left [e^{2a\sigma+2\eta t}-e^{2\eta t}\right ]|x|^2,\\\\
    &=&\displaystyle\frac{K_0^{2}}{2a}e^{2\eta t}\left [e^{2a\sigma}-1\right ]|x|^{2}
    \end{array}
    $$
and (\ref{integralPhi}) follows by considering $$\gamma_1(\sigma)=\frac{1}{K_0^{2}2(a+\eta)}\left [1-e^{-2a\sigma}\right]\quad \textnormal{and}\quad\displaystyle\gamma_2(\sigma)=\frac{K_0^{2}}{2a}\left[e^{2a\sigma}-1\right ].$$
% \todo{falta revisar la consecuencia del Lema 3.4 del documento MAIN}
\end{proof}

\noindent\textit{Step 2: Feedback stabilization of the linear control system (\ref{LTV-l})}

\begin{lemma}
Under the assumptions of Theorem \ref{T2}, for any $\mathcal{L}>2(\theta_{2}+2\theta_{1})$ there exist $M\geq 1$
and a linear feedback $u(t)=-F(t)x(t)$ for such that the shifted control system (\ref{A+LI-BF}) is
$e^{(\theta_{1}+\theta_{2})s}$--nonuniformly exponentially stable, namely,
\begin{equation}
\label{est-sof}
||\Phi_{A+\frac{\mathcal{L}}{2}I-BF}(t,s)||\leq Me^{-(\frac{\mathcal{L}}{2}-\theta_1)(t-s)+(\theta_1+\theta_2) s} \quad \textnormal{for all}\; t\geq s\geq 0.
\end{equation}

%is nonuniformly completely controllable, 
% and the solution $S_{L}(t)$ of the Riccati equation (\ref{ER})  and $B(t)$ are such that the LTV system (\ref{piloto3})
% has the property of nonuniform bounded growth, 
%then the control system (\ref{LTV-l}) is also nonuniformly exponentially stabilizable.
\end{lemma}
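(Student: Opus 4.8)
The plan is to realize the feedback through the Riccati solution and then invoke the sufficient condition of Proposition \ref{T3}. I would set the feedback gain $F(t)=B^{T}(t)\,S_{\mathcal{L}}(t)$, where $S_{\mathcal{L}}(\cdot)$ is a solution of the Riccati equation \eqref{ER} furnished by Remark \ref{exist-ricc} (whose existence rests on Lemma \ref{L1} with $\ell=\mathcal{L}$ together with the complete controllability of the shifted plant). With this choice the closed--loop matrix of \eqref{A+LI-BF} becomes $U(t)=A(t)+\frac{\mathcal{L}}{2}I-B(t)B^{T}(t)S_{\mathcal{L}}(t)$, and the task reduces to checking the two hypotheses of Proposition \ref{T3} for $U(\cdot)$ under the identifications $\phi_{1}=\theta_{1}$ and $\phi_{2}=\theta_{2}$.

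First I would verify the differential inequality \eqref{RiccatiS}, which is purely algebraic: expanding $(A+\mathcal{L}I)^{T}S+S(A+\mathcal{L}I)=A^{T}S+SA+2\mathcal{L}S$ in \eqref{ER} and substituting $U=A+\frac{\mathcal{L}}{2}I-BB^{T}S$ gives
\[
\dot{S}_{\mathcal{L}}+U^{T}S_{\mathcal{L}}+S_{\mathcal{L}}U=-I-\mathcal{L}S_{\mathcal{L}}-S_{\mathcal{L}}BB^{T}S_{\mathcal{L}}\leq-(I+\mathcal{L}S_{\mathcal{L}}),
\]
since the symmetric term $S_{\mathcal{L}}BB^{T}S_{\mathcal{L}}$ is positive semidefinite. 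Hence \eqref{RiccatiS} holds with the very same constant $\mathcal{L}$, which exceeds $2(\phi_{2}+2\phi_{1})=2(\theta_{2}+2\theta_{1})$ by hypothesis.

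The heart of the argument --- and the step I expect to be the main obstacle --- is the two--sided bound \eqref{CotaparaS}, namely $C_{1}e^{-2\theta_{1}t}I\leq S_{\mathcal{L}}(t)\leq C_{2}e^{2\theta_{2}t}I$. Here I would exploit the variational characterization $x^{T}S_{\mathcal{L}}(t)x=\min_{u}\int_{t}^{+\infty}\bigpars{|z(s)|^{2}+|u(s)|^{2}}\,ds$ attached to the limiting construction of Remark \ref{exist-ricc}, the minimum running over the trajectories of $\dot{z}=(A(t)+\mathcal{L}I)z+B(t)u$ with $z(t)=x$. For the upper bound I would insert the suboptimal input that steers $x$ to the origin on $[t,t+\sigma_{t}]$ through the gramian of the shifted plant and then vanishes; its control energy is governed by $W_{\mathcal{L}}^{-1}$, for which \eqref{EsGramPer} supplies the nonuniform factor $e^{2\mu_{0}t}$, whereas the companion state cost $\int_{t}^{t+\sigma_{t}}|z(s)|^{2}\,ds$ is dominated via the upper estimate of Lemma \ref{evolutionoperatorresult}, contributing $e^{2\eta t}$; these, assembled with the growth factors carrying $\mu_{1}$, are what produce the exponent $\theta_{2}=\eta+2(\mu_{1}+\mu_{0})$. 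For the lower bound I would run the dual reachability estimate: expressing $x=z(t)$ through the backward variation--of--constants formula and averaging over $s\in[t,t+\sigma]$, a Cauchy--Schwarz step against the lower estimate of Lemma \ref{evolutionoperatorresult} forces $\int(|z|^{2}+|u|^{2})$ to stay above $C_{1}e^{-2\theta_{1}t}|x|^{2}$, whence $S_{\mathcal{L}}(t)\geq C_{1}e^{-2\theta_{1}t}I$ with $\theta_{1}=\mu_{1}+4\eta$. The delicate point throughout is the bookkeeping of the nonuniform exponents so that exactly $\phi_{1}=\theta_{1}$ and $\phi_{2}=\theta_{2}$ emerge and stay compatible with $\mathcal{L}>2(\theta_{2}+2\theta_{1})$.

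Finally, with \eqref{CotaparaS} and \eqref{RiccatiS} established, Proposition \ref{T3} applies directly and yields $M=\max\{1,\sqrt{C_{2}/C_{1}}\}\geq1$ together with
\[
\|\Phi_{A+\frac{\mathcal{L}}{2}I-BF}(t,s)\|\leq Me^{(\theta_{1}+\theta_{2})s}e^{-(\frac{\mathcal{L}}{2}-\theta_{1})(t-s)}\quad\textnormal{for all }t\geq s\geq0,
\]
which is precisely \eqref{est-sof}, the decay rate $\frac{\mathcal{L}}{2}-\theta_{1}>\theta_{1}+\theta_{2}$ being guaranteed by $\mathcal{L}>2(\theta_{2}+2\theta_{1})$.
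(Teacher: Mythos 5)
Your proposal follows the same skeleton as the paper's proof: construct the feedback from the Riccati solution $S_{\mathcal{L}}$ of \eqref{ER} supplied by Remark \ref{exist-ricc}, verify the two hypotheses of Proposition \ref{T3} with $\phi_{1}=\theta_{1}$ and $\phi_{2}=\theta_{2}$, and read off \eqref{est-sof}. Two things differ. First, the gain: the paper takes $F(t)=\frac{1}{2}B^{T}(t)S_{\mathcal{L}}(t)$, for which the Riccati equation becomes \emph{exactly} $\dot{S}+U^{T}S+SU=-(I+\mathcal{L}S)$, whereas your $F(t)=B^{T}(t)S_{\mathcal{L}}(t)$ leaves the extra term $-S_{\mathcal{L}}BB^{T}S_{\mathcal{L}}\leq 0$; your algebra is correct, and since Proposition \ref{T3} only needs the inequality \eqref{RiccatiS}, this is a harmless variant. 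Second, and more substantively, the bound \eqref{CotaparaS}: the paper does not argue variationally but cites Lemma 3 of Ikeda et al. \cite{Ikeda}, which sandwiches $D^{-1}(t)\leq S_{\mathcal{L}}(t)\leq E(t)$ between explicit matrices built from the gramian $W_{\frac{\mathcal{L}}{2}}$ and the observability-type matrix $Y_{\frac{\mathcal{L}}{2}}$, and then spends the bulk of its proof estimating $D(t)$ and $E(t)$ via \eqref{EsGramPer}, the shifted analogue of Lemma \ref{evolutionoperatorresult}, and \eqref{TRVP}; that estimation is precisely where $\theta_{1}=\mu_{1}+4\eta$ and $\theta_{2}=\eta+2(\mu_{1}+\mu_{0})$ come from. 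Your LQ-variational route (suboptimal steering input for the upper bound, dual Cauchy--Schwarz estimate for the lower bound) is essentially a re-derivation of that lemma of Ikeda, so it is viable; what it buys is self-containedness, and what it costs is having to redo by hand the bookkeeping the paper outsources to the citation.

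That bookkeeping is the one place where your write-up falls short of a proof: you assert that the exponents ``assemble'' to $\theta_{1}$ and $\theta_{2}$ but do not carry out the computation, and this computation is the actual content of the lemma. Note in particular that for the \emph{lower} bound the factor $e^{-2\theta_{1}t}$ with $\theta_{1}=\mu_{1}+4\eta$ cannot come from the lower estimate of Lemma \ref{evolutionoperatorresult} alone: one also needs the \emph{upper} bounds $\operatorname{tr}\bigl(W\bigr)\leq n\alpha_{1}(\sigma)e^{2\mu_{1}t}$ and $\operatorname{tr}\bigl(Y\bigr)/\lambda_{\min}\bigl(Y\bigr)\leq C e^{4\eta t}$, which is how $\mu_{1}$ and the factor $4$ enter. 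Likewise, the variational identity $x^{T}S_{\mathcal{L}}(t)x=\min_{u}\int_{t}^{+\infty}\bigl(|z|^{2}+|u|^{2}\bigr)\,ds$ is not stated in the paper; it requires the standard LQ argument that the monotone limit in Remark \ref{exist-ricc} is the infinite-horizon optimal cost, finite thanks to Lemma \ref{L1}. These are fillable gaps of standard material rather than errors, but as written the central estimate is sketched, not proved.
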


\begin{proof}
As it was previously pointed out in Remark \ref{exist-ricc}, the existence of a solution $S_{\mathcal{L}}(\cdot)$ for the Riccati equation (\ref{ER}) is ensured. Now, let us consider the linear system (\ref{A+LI-BF}) with a feedback gain defined by:
\begin{equation}
\label{FL}
F(t)=\frac{1}{2} B^{T}(t) S_{\mathcal{L}}(t).
\end{equation}

Upon inserting (\ref{FL}) into (\ref{A+LI-BF}) we obtain 
\begin{equation}
\label{perturbadoBBS}
\dot{y}=\left[A(t)+\frac{\mathcal{L}}{2}I-B(t)F(t)\right]y=\left[A(t)+\frac{\mathcal{L}}{2}I-\frac{1}{2}B(t)B^{T}(t)S_{\mathcal{L}}(t)\right]y.
\end{equation}
% which is (\ref{piloto3}). 

We will prove that the linear system
(\ref{perturbadoBBS}) satisfies all the assumptions of Proposition \ref{T3}, with $U(t)=\hat{A}(t)+\frac{\mathcal{L}}{2}I$,
where $\hat{A}(t)=A(t)-B(t)F(t)$.

% \textcolor{red}{QUIZAS ESTE PARRAFO YA NO SERÁ NECESARIO}Firstly, we know by hypothesis that (\ref{piloto3}) has the property of nonuniform bounded growth with parameters $(\tilde{K}_0,\tilde{a},\tilde{\vartheta})$. Moreover,
% the Remark \ref{R1} implies that the system (\ref{piloto3}) satisfies the condition (\ref{BGNU}), with constants $\tilde{c}$ and $\tilde{d}$.

Firstly, we will verify that the solution $S_{L}(t)$ of the Riccati equation (\ref{ER}) satisfies the condition (\ref{CotaparaS}). In fact, as the 
nonuniform complete controllability implies complete controllability, by using Lemma \ref{L1} with $\ell=\frac{\mathcal{L}}{2}$, the Lemma 3 from \cite{Ikeda} also states that:
\begin{equation*}
D^{-1}(t)\leq S_\mathcal{L}(t)\leq E(t)
\end{equation*}
where the matrices $D(t)$ and $E(t)$ are defined by
$$
D(t)=Y_{\frac{\mathcal{L}}{2}}^{-1}\left(t, t_{c}(t)\right)+\operatorname{tr}\left(W_{\frac{\mathcal{L}}{2}}\left(t, t_{c}(t)\right)\right)\left(1+\frac{\operatorname{tr}\left(Y_{\frac{\mathcal{L}}{2}}\left(t, t_{c}(t)\right)\right)}{\lambda_{\min }\left(Y_{\frac{\mathcal{L}}{2}}\left(t, t_{c}(t)\right)\right)}\right)^{2} I
$$
and
$$
E(t)=W_{\frac{\mathcal{L}}{2}}^{-1}\left(t, t_{c}(t)\right)+\operatorname{tr}\left(Y_{\frac{\mathcal{L}}{2}}\left(t, t_{c}(t)\right)\right)\left(1+\frac{\operatorname{tr}\left(W_{\frac{\mathcal{L}}{2}}\left(t, t_{c}(t)\right)\right)}{\lambda_{\min }\left(W_{\frac{\mathcal{L}}{2}}\left(t, t_{c}(t)\right)\right)}\right)^{2} I,
$$
with $Y_{\frac{\mathcal{L}}{2}}(t,t_{c}(t))$ described by
\begin{equation*}
    Y_{\frac{\mathcal{L}}{2}}(t, t_{c}(t))=\displaystyle\int_{t}^{t_{c}(t)}\Phi_{A+\frac{\mathcal{L}}{2}I}^{T}(s,t)\Phi_{A+\frac{\mathcal{L}}{2}I}(s,t)\;ds,
\end{equation*}
where $t_{c}(t)$ is any number such that $W_{\frac{\mathcal{L}}{2}}(t,t_{c}(t))$ is positive definite. 

A direct consequence of Lemma \ref{L1} with $\ell=\frac{\mathcal{L}}{2}$ is that for all $t\geq0$, there exists $\sigma_0(t)>0$, the gramian matrix $W_{\frac{\mathcal{L}}{2}}(t,t+\sigma)$
is positive definite for any $\sigma\geq\sigma_0(t)$. In consequence, we will consider $t_{c}(t)=t+\sigma$.

In order to estimate the lower and upper bounds for $D^{-1}(t)$ and $E(t)$ respectively, we need to obtain 
auxiliary estimations. By using (\ref{EsGramPer}) with $\ell=\frac{\mathcal{L}}{2}$, we have that
\begin{displaymath}
    0<\alpha_0(\sigma)e^{-\mathcal{L}\sigma}e^{-2\mu_0 t}I\leq W_{\frac{\mathcal{L}}{2}}(t,t+\sigma)\leq\alpha_1(\sigma)e^{2\mu_1 t}I.
\end{displaymath}

In addition, by following the lines of Lemma \ref{evolutionoperatorresult}, we deduce the inequalities
\begin{displaymath}
\gamma_{1}^{\frac{\mathcal{L}}{2}}(\sigma)e^{-2\eta t}I\leq Y_{\frac{\mathcal{L}}{2}}(t,t+\sigma)\leq  \gamma_{2}^{\frac{\mathcal{L}}{2}}(\sigma)e^{2\eta t}I,
\end{displaymath}
where
$$
\displaystyle\gamma_1^{\frac{\mathcal{L}}{2}}(\sigma)=\frac{1}{K_0^{2}2(a+\frac{\mathcal{L}}{2}+\eta)}\left[1-e^{-2(a+\frac{\mathcal{L}}{2})\sigma}\right ]\quad \textnormal{and}\quad \gamma_2^{\frac{\mathcal{L}}{2}}(\sigma)=\frac{K_0^{2}}{2(a+\frac{\mathcal{L}}{2})}\left[e^{2(a+\frac{\mathcal{L}}{2})\sigma}-1\right ].
$$

The above estimations for $W_{\frac{\mathcal{L}}{2}}(\cdot,\cdot)$ and $Y_{\frac{\mathcal{L}}{2}}(\cdot,\cdot)$
combined with (\ref{TRVP}) allow us to deduce the following bounds:
%\begin{itemize}
%\item[$\bullet$] $Y_{\frac{L}{2}}^{-1}(t,t_{c}(t))\leq \frac{e^{2\eta t}}{\gamma_{1}^{\frac{L}{2}}(\sigma)}I$ and $W_{\frac{L}{2}}^{-1}(t,t+\sigma)\leq \frac{e^{L\sigma+2\mu_{0}t}}{\alpha_{0}(\sigma)}I$,
%\item[$\bullet$] $\operatorname{tr} Y_{\frac{L}{2}}(t,t_{c}(t))\leq n\gamma_{2}^{\frac{L}{2}}(\sigma)e^{2\eta t}$ and
%$\operatorname{tr}\,W_{\frac{L}{2}}(t,t+\sigma)\leq n\alpha_1(\sigma)e^{2\mu_1 t}$,
%\item[$\bullet$] $\gamma_{1}^{\frac{L}{2}}(\sigma)e^{-2\eta t}\leq 
%\lambda_{\min}(Y_{\frac{L}{2}}(t,t_{c}(t)))$ and $\alpha_0(\sigma)e^{-(L\sigma+2\mu_0 t)} \leq
%\lambda_{\min}\left(W_{\frac{L}{2}}(t,t+\sigma)\right)$.
%\end{itemize}

\begin{displaymath}
% \begin{table}[h]
\begin{tabular}{|p{\dimexpr 6.5cm-2\tabcolsep}|p{\dimexpr 6.5cm-2\tabcolsep}|}
\hline\vspace{.2cm} 
 \centering\ensuremath{Y_{\frac{\mathcal{L}}{2}}^{-1}(t,t+\sigma)\leq \frac{e^{2\eta t}}{\gamma_{1}^{\frac{\mathcal{L}}{2}}(\sigma)}I}    &\vspace{.2cm}
 \begin{center}\ensuremath{W_{\frac{\mathcal{L}}{2}}^{-1}(t,t+\sigma)\leq \frac{e^{\mathcal{L}\sigma+2\mu_{0}t}}{\alpha_{0}(\sigma)}}\end{center}
 
 \\
 \hline \vspace{.2cm}
  \centering\ensuremath{\operatorname{tr} Y_{\frac{\mathcal{L}}{2}}(t,t+\sigma)\leq n\gamma_{2}^{\frac{\mathcal{L}}{2}}(\sigma)e^{2\eta t}}  & \vspace{.2cm}
\begin{center}\ensuremath{\operatorname{tr}\,W_{\frac{\mathcal{L}}{2}}(t,t+\sigma)\leq n\alpha_1(\sigma)e^{2\mu_1 t}}\end{center}

\\
 \hline \vspace{.2cm}
 \centering\ensuremath{\gamma_{1}^{\frac{\mathcal{L}}{2}}(\sigma)e^{-2\eta t}\leq 
\lambda_{\min}(Y_{\frac{\mathcal{L}}{2}}(t,t+\sigma))}   & \vspace{.2cm}
\begin{center}\ensuremath{\alpha_0(\sigma)e^{-(\mathcal{L}\sigma+2\mu_0 t)} \leq
\lambda_{\min}(W_{\frac{\mathcal{L}}{2}}(t,t+\sigma))}\end{center}

\\
\hline
\end{tabular}
% \end{table}
\end{displaymath}

By gathering the above left inequalities and using that $\theta_{1}=\mu_1+4\eta$, we can conclude that:
\begin{displaymath}
\begin{array}{rcl}
D(t)&\leq & \displaystyle\left[\frac{e^{2\eta t}}{\gamma_{1}^{\frac{\mathcal{L}}{2}}(\sigma)}+ n\alpha_1(\sigma)e^{2\mu_1 t}\left(1+\frac{n\gamma_{2}^{\frac{\mathcal{L}}{2}}(\sigma)e^{4\eta t}}{\gamma_{1}^{\frac{\mathcal{L}}{2}}(\sigma)}\right)^2\right]I \\
&\leq &  \displaystyle\max\left\{\frac{1}{\gamma_1^{\frac{\mathcal{L}}{2}}(\sigma)},n\alpha_1(\sigma)\right\}\left[e^{2\eta t}+e^{2\mu_1 t}\left(1+\frac{n\gamma_{2}^{\frac{\mathcal{L}}{2}}(\sigma)e^{4\eta t}}{\gamma_1^{\frac{\mathcal{L}}{2}}(\sigma)}\right)^{2}\right]I\\
&\leq &  \displaystyle\max\left\{\frac{1}{\gamma_1^{\frac{\mathcal{L}}{2}}(\sigma)},n\alpha_1(\sigma)\right\}\left[ e^{2\theta_{1} t}+e^{2\theta_1 t}\left(1+\frac{n\gamma_{2}^{\frac{\mathcal{L}}{2}}(\sigma)e^{\theta_{1} t}}{\gamma_1^{\frac{\mathcal{L}}{2}}(\sigma)}\right)^{2}\right]I\\\\
&\leq & \displaystyle\tilde{N}^{-1}e^{2\theta_{1}t}I,
\end{array}
\end{displaymath}
and, as $D(t)$ is positive definite, we can deduce that:
\begin{equation*}
% \label{CotaSabajo}
\tilde{N}e^{-2\theta_1 t} I\leq D^{-1}(s)\leq S_{\mathcal{L}}(t).
\end{equation*}

Similarly, by using the above right inequalities and recalling that $\theta_2=\eta+2(\mu_1+\mu_0)$,
we have that:
\begin{displaymath}
\begin{array}{rcl}
S_{\mathcal{L}}(t)&\leq & \displaystyle E(t)\\
&\leq & \displaystyle \left[\frac{e^{\mathcal{L}\sigma+2\mu_0 t}}{\alpha_{0}(\sigma)}+n \gamma_{2}^{\frac{\mathcal{L}}{2}}(\sigma)e^{2\eta t}\left(1+\frac{n \alpha_{1}(\sigma)e^{2\mu_1 t}}{\alpha_{0}(\sigma)e^{-\mathcal{L}\sigma-2\mu_0 t}}\right)^{2}\right] I \\
&\leq &  \displaystyle \max\left\{\frac{e^{\mathcal{L}\sigma}}{\alpha_0(\sigma)},n\gamma_{2}^{\frac{\mathcal{L}}{2}}(\sigma) \right\}\left[e^{2\mu_0 t}+e^{2\eta t} \left(1+\frac{n \alpha_{1}(\sigma)e^{\mathcal{L}\sigma}e^{2(\mu_1+\mu_0) t}}{\alpha_{0}(\sigma)}\right)^{2}\right] I\\
&\leq &  \displaystyle \max\left\{\frac{e^{\mathcal{L}\sigma}}{\alpha_0(\sigma)},n\gamma_{2}^{\frac{\mathcal{L}}{2}}(\sigma) \right\}\left[e^{2\theta_{2} t}+e^{2\theta_{2} t} \left(1+\frac{n \alpha_{1}(\sigma)e^{\mathcal{L}\sigma}e^{2\theta_{2} t}}{\alpha_{0}(\sigma)}\right)^{2}\right] I,
\end{array}
\end{displaymath}
and we deduce that
\begin{equation*}
% \label{CotaS}
S_{\mathcal{L}}(t) \leq \tilde{M}e^{2\theta_2 t
} I,
\end{equation*}
and the estimate (\ref{CotaparaS}) is verified.

Finally, we will verify that the solution $S_{\mathcal{L}}(t)$ of the Riccati equation (\ref{ER}) satisfies the condition (\ref{RiccatiS}) from 
Proposition \ref{T3}. In order to do that, if we consider (\ref{A-BF}) and (\ref{FL}), we can see that $S_{\mathcal{L}}(t)$ can be seen as a solution of the following equation:
\begin{displaymath}
\begin{array}{c}
\dot{S}(t)+\left(A(t)+\frac{\mathcal{L}}{2} I\right)^{T} S(t)+S(t)\left(A(t)+\frac{\mathcal{L}}{2} I\right)-S(t) B(t) B^{T}(t) S(t)=-(I+\mathcal{L}S(t)).
%\dot{S}%(t)+\left(A(t)+\frac{L}{2} I-%B(t)F(t)\right)^{T} %S(t)+S(t)\left(A(t)+\%frac{L}{2} I-%B(t)F(t)\right)&=&-%(I+LS(t)),
\end{array}
\end{displaymath}

Under some minor transformations and considering $\hat{A}(t)=A(t)-B(t)F(t)$ with $F(t)$ defined by (\ref{FL}), the previous equation can be rewritten as follows:
\begin{equation*}
% \label{RiccatiS_0}
\begin{array}{lcl}
\dot{S}(t)+\left(\hat{A}(t)+\frac{\mathcal{L}}{2} I\right)^{T} S(t)+S(t)\left(\hat{A}(t)+\frac{\mathcal{L}}{2} I\right)&=&-(I+\mathcal{L}S(t)),
\end{array}
\end{equation*}
and the condition (\ref{RiccatiS}) is verified for any $\mathcal{L}>0$, where $U(t)=\hat{A}(t)+\frac{\mathcal{L}}{2}I$.

 Therefore by choosing $\mathcal{L}>2(\theta_2+2\theta_1)$, the Proposition  \ref{T3} ensures that the li\-near system (\ref{perturbadoBBS})
is nonuniformly exponentially stable, and more specifically, the inequality (\ref{est-sof}) is verified.
\end{proof}

\medskip

\noindent\textit{Step 3: End of proof.}
By using the identity $\Phi_{A-BF+\frac{\mathcal{L}}{2}I}(t,s)=\Phi_{A-BF}(t,s)e^{\frac{\mathcal{L}}{2}(t-s)}$ 
combined with (\ref{est-sof}), it follows that
\begin{displaymath}
\|\Phi_{A-BF}(t,s)\|\leq Me^{(\theta_{1}+\theta_{2})s}e^{-(\mathcal{L}-\theta_1)(t-s)} \quad \textnormal{for all $t\geq s\geq0$},
\end{displaymath}
and the Theorem follows.

\section{Conclusions and comments}

%This paper introduced a new property of controllability: the nonuniform complete controllability (NUCC), which is more general than the uniform complete controllability (UCC) but is more specific than the complete controllability (CC), both classical in control systems theory. This property is not artificial since we proved that: i) a LTV control system is UCC if and only if its plant  satisfies the property of uniform bounded growth, ii) the NUCC implies a condition encompassing the nonuniform bounded growth.

%\medskip
This paper introduced a new property of controllability for nonautonomous linear control systems: the nonuniform complete controllability (NUCC), which is more general than the uniform complete controllability (UCC) but is more specific than the complete controllability (CC), both classical in control theory. First, we reviewed the uniform context, established by R. Kalman, and proved that  the currently called \emph{Kalman's condition} is equivalent to the uniform bounded growth property (Lemma \ref{EKC}). Then, we moved into the nonuniform context, introduced the NUCC property and proved that this new notion of controlability is strongly related to the nonuniform version of Kalman condition (Theorem \ref{p2l3}). 

\medskip

This paper also proved that the nonuniformly complete controllability implies the feedback stabilizability, where the linear feedback gain leads to a nonuniformly exponentially 
stable linear system (Theorem \ref{T2}). We stressed that this class of stability is a limit case of a dichotomy property, namely, the nonuniform exponential dichotomy (NUED), and allows an interpretation of the feedback stabilization from a spectral theory arising from the NUED (Corollary \ref{coro-spectrum}).

\medskip
The current results can certainly be improved and also raise new questions:

\medskip

\noindent a) Our result of feedback stabilization (Theorem \ref{T2}) cannot be achieved for exponential
decays $\mathcal{L}-\theta_{1}\in (0,3\theta_{1}+2\theta_{1}]$. This restriction is due to the specific approach 
carried out in our proof (Proposition \ref{T3}) and could be improved in future research.

\medskip 

\noindent b) Another open problem is to obtain a feedback stabilization for a general
NUCC linear system. Note that an assumption of Theorem \ref{T2} is that the plant has the property of nonuniform bounded growth. It will be extremely interesting to generalized our result by considering the nonuniform Kalman's condition instead of the nonuniform bounded growth.

\medskip 

\noindent c) Our intepretation of Theorem \ref{T2} from a perspective of the nonuniform
exponential dichotomy spectrum  $\Sigma_{NU}(A)$ suggests to enquire about the links between feedback stabilizability and feedback assignability of the spectrum, that is, given a set $\mathcal{S}\subset \mathbb{R}$, 
find a feedback gain $F$ such that $\Sigma_{NU}(A-BF)=\mathcal{S}$. This problem has been previously studied
for UCC nonautonomous systems and we refer to \cite{Anh,Babiarz} for details.

\appendix

\section{Energy of the controlling  input}\label{app:energy}

The energy of an input $u(\cdot)$ of the control system \eqref{control1} is the nonne\-gative number:
$$
E(u)=\left(\int_{t_0}^{+\infty}u^T(t)u(t)dt\right)^{\frac12}.
$$

The following is an analysis from the point of view of the energy required for the input $u^\star(\cdot)$ described by 
 \eqref{input} to take an initial state $x_0$ and bring it to the origin. We have that
$$
\begin{array}{rcl}
E^2(u^\star)&=&\displaystyle\int_{t_0}^{t_f}x_0^TW^{-T}(t_0,t_f)\Phi(t_0,t)B(t)B^T(t)\Phi^T(t_0,t)W^{-1}(t_0,t_f)x_0dt,\\\\
&=&\displaystyle x_0^TW^{-T}(t_0,t_f)\left[\int_{t_0}^{t_f}\Phi(t_0,t)B(t)B^T(t)\Phi^T(t_0,t)dt\right]W^{-1}(t_0,t_f)x_0,\\\\
&=&\displaystyle x_0^TW^{-T}(t_0,t_f)\left[W(t_0,t_f)\right]W^{-1}(t_0,t_f)x_0= x_0^TW^{-T}(t_0,t_f)x_0,
\end{array}
$$
where $W^{-T}$ stands for the transpose of the inverse of $W$. Since $W(t_0,t_f)$ is positive definite, then $W^{-T}(t_0,t_f)$ is also positive definite. In addition,  $W^{-T}(t_0,t_f)$ and $W^{-1}(t_0,t_f)$ have the same eigenvalues and they are the inverse of the eigenvalues of $W(t_0,t_f)$. Based on the above, we have the following bounds:
$$
\lambda_{\min}(W^{-1}(t_0,t_f))\|x_0\|^2
\leq E^2(u^\star) \leq \lambda_{\max}(W^{-1}(t_0,t_f))\|x_0\|^2
$$
or, equivalently:
\begin{equation}
\label{CotasEnergia}
\lambda_{\max}(W(t_0,t_f))\|x_0\|^2
\leq E^2(u^\star) \leq \lambda_{\min}(W(t_0,t_f))\|x_0\|^2.
\end{equation}

In the context of the classical uniform case, from \eqref{UC1} and \eqref{UC2}, with $\sigma=t_f-t_0$, it follows that: 
$$
\alpha_0(\sigma)\leq \lambda_{\min}(W(t_0,t_f))\leq  \lambda_{\max}(W(t_0,t_f))\leq \alpha_1(\sigma),
$$
therefore, by considering this estimation for inequality (\ref{CotasEnergia}) we have that:
$$
\alpha_0(\sigma)\|x_0\|^2\leq E^2(u^\star) \leq \alpha_1(\sigma)\|x_0\|^2
$$
and the required energy, in order to control the state $x_0$, can not be made arbitrarily small. Moreover, in the \emph{worst case scenario}, the required energy is $\alpha_1(\sigma)\|x_0\|^2
$, which is independent of the initial time $t_0$.

Now, when considering the nonuniform complete controllability framework, there exists nonnegative numbers $\{\mu_0, \mu_1, \tilde{\mu}_0, \tilde{\mu}_1\}$ and functions $\{\alpha_0(\cdot), \beta_0(\cdot), \alpha_1(\cdot),\beta_1(\cdot)\}$ such that for any $t_0\geq0$, there exists $\sigma_0(t_0)>0$ where the estimates \eqref{alpha0alpha1} and \eqref{K} are verified for all $\sigma\geq\sigma_0(t_0)$. In particular, from \eqref{alpha0alpha1}, the size of eigenvalues of $W(t_0,t_0+\sigma)$ depends on the initial time $t_0$ and the energy bounds are the following:
$$
e^{-2\mu_0t_0}\alpha_0(\sigma)\|x_0\|^2\leq E^2(u^\star) \leq e^{2\mu_1t_0}\alpha_1(\sigma)\|x_0\|^2.
$$

Let us observe that the required controlling energy belongs to an interval that exponentially grows towards $(0,+\infty)$ as $t_0\to +\infty$. In other words, and considering the same approach as in item b) of Remark \ref{Rmu}, the last inequality indicates that the lower and upper bounds for the energy depend on the initial time $t_0$, which corresponds to a generalization of the uniform case.

\end{document}